\documentclass[12pt,twoside]{article}
\usepackage{amsmath,amsfonts,amssymb,amsthm,array,mathdots}
\usepackage{a4,enumitem,indentfirst}
\usepackage{etoolbox} % For qed at end of example
\usepackage{url} % There is a reference like that

\usepackage{stmaryrd} % This is used for double brackets
%INSTEAD:

%\newcommand\llbracket{[\![}
%\newcommand\rrbracket{]\!]}
%\usepackage{xr}

\usepackage{epsfig, color} % Needed because of Inkscape figures
%\usepackage{enumerate}
%\usepackage{subfig}
%\usepackage{chngcntr} %To deal with counter-related problems
%\usepackage{authblk}
%\usepackage[group-separator=]{siunitx} %For number separation
%\usepackage[section]{placeins}
%\usepackage{tikz}
%\usepackage{pgfplots}\pgfplotsset{compat=1.13}
%\usetikzlibrary{arrows,positioning,calc,patterns}
% :set indentexpr=""

% \usepackage{showlabels}

% \externaldocument{gso} % For cross-reference with gso
% \externaldocument{gsi} % For cross-reference with gsi

\graphicspath{{figures/}}

\theoremstyle{plain}

\newtheorem{theo}{Theorem}
%\renewcommand*{\thetheo}{\Alph{theo}}
%\addtocounter{theo}{0}
\newtheorem{prop}{Proposition}[section]

\newtheorem{coro}[prop]{Corollary}
\newtheorem{fact}[prop]{Fact}
\newtheorem{lemma}[prop]{Lemma}

\theoremstyle{definition}
\newtheorem{example}[prop]{Example}
\AtEndEnvironment{example}{\null\hfill$\diamond$}%

\newtheorem{remark}[prop]{Remark}
\AtEndEnvironment{remark}{\null\hfill$\diamond$}%

% \theoremstyle{remark}

%\newtheorem{case}{Case}
%\counterwithin*{case}{theo}
%\counterwithin*{case}{prop}
%\counterwithin*{case}{fact}

%For writing at the margin
%\newcommand{\margem}[1]{\marginpar{{\tiny {#1}}}}

\newcommand\ray{\operatorname{ray}}
\newcommand\per{\operatorname{per}}

\newcommand\card{\operatorname{card}}

\newcommand\Frenet{\mathfrak{F}}

\newcommand{\Word}{{\mathbf W}}
\newcommand{\Ideal}{{\mathbf I}}

\newcommand{\bfx}{{\mathbf x}}
\newcommand{\bfy}{{\mathbf y}}

\newcommand{\sing}{\operatorname{sing}}
\newcommand{\PA}{\operatorname{PA}}

\newcommand\SO{\operatorname{SO}}

\newcommand\GL{\operatorname{GL}}
\newcommand\Lo{\operatorname{Lo}}
\newcommand\Up{\operatorname{Up}}

\newcommand\so{\operatorname{\mathfrak{so}}}

\newcommand\Spin{\operatorname{Spin}}
\newcommand\spin{\mathfrak{spin}}

\newcommand\inv{\operatorname{inv}}
\newcommand\Inv{\operatorname{Inv}}
\newcommand\Diag{\operatorname{Diag}}
\newcommand\diag{\operatorname{diag}}
\newcommand\B{\operatorname{B}}
\newcommand\Quat{\operatorname{Quat}}

\newcommand{\resultant}{\operatorname{resultant}}
\newcommand{\discriminant}{\operatorname{discriminant}}

\newcommand{\longhat}{\operatorname{hat}}
\newcommand{\chop}{\operatorname{chop}}
\newcommand{\adv}{\operatorname{adv}}
\newcommand{\iti}{\operatorname{iti}}

\newcommand{\mult}{\operatorname{mult}}

\newcommand{\NN}{{\mathbb{N}}}
\newcommand{\ZZ}{{\mathbb{Z}}}

\newcommand{\RR}{{\mathbb{R}}}

\newcommand{\Ss}{{\mathbb{S}}}

\newcommand{\DD}{{\mathbb{D}}}

\newcommand{\PP}{{\mathbb{P}}}
\newcommand{\HH}{{\mathbb{H}}}

\newcommand{\cH}{{\cal H}}
\newcommand{\cL}{{\cal L}}

\newcommand{\cU}{{\cal U}}

\newcommand{\cD}{{\cal D}}
\newcommand{\cP}{{\cal P}}

\newcommand{\cM}{{\cal M}}
\newcommand{\cY}{{\cal Y}}

\newcommand{\bi}{{\mathbf{i}}}
\newcommand{\bj}{{\mathbf{j}}}
\newcommand{\bk}{{\mathbf{k}}}
\newcommand{\bL}{{\mathbf{L}}}
\newcommand{\bQ}{{\mathbf{Q}}}

\newcommand{\fa}{{\mathfrak a}}
\newcommand{\fb}{{\mathfrak b}}
\newcommand{\fc}{{\mathfrak c}}

\newcommand{\fl}{{\mathfrak l}}

\newcommand{\Pos}{\operatorname{Pos}}
\newcommand{\Neg}{\operatorname{Neg}}
\newcommand{\Bru}{\operatorname{Bru}}
\newcommand{\BL}{\operatorname{BL}}

\newcommand{\sign}{\operatorname{sign}}

\parskip=4pt plus 8pt

\begin{document}

\title{Homotopy type of spaces of \\
locally convex curves in the sphere $\Ss^3$}
\author{Em{\'\i}lia Alves 
% \footnote{emiliaalves@id.uff.br,
% Instituto de Matem\'atica e Estat{\'\i}stica, UFF}
% emiliacstalves@puc-rio.br;
% Departamento de Matem\'atica, PUC-Rio and 
% Departamento de Matem\'atica Aplicada,
\and Victor Goulart
%  \footnote{jose.g.nascimento@ufes.br;
%  Departamento de Matem\'atica, UFES,
%  Av. Fernando Ferrari 514;
%  Campus de Goiabeiras, Vit\'oria, ES 29075-910, Brazil.}
\and Nicolau C. Saldanha}
%  \footnote{saldanha@puc-rio.br; Departamento de Matem\'atica, PUC-Rio,
%  R. Marqu\^es de S. Vicente 255, Rio de Janeiro, RJ 22451-900, Brazil.  } }
\date{}
% \date{\today}
%\keywords{locally convex curves, nondegenerate curves, Bruhat stratification, Schubert stratification, Spin group, totally positive matrices} 
%\subjclass[2010]{Primary 52A55, \; Secondary 34B05}

\maketitle

\begin{abstract}
Locally convex (or nondegenerate) curves in the sphere $\Ss^n$
(or the projective space $\PP^n$)
have been studied for several reasons,
including the study of linear ordinary differential equations
of order $n+1$.
Taking Frenet frames allows us to obtain
corresponding curves $\Gamma$ in the group $\Spin_{n+1}$,
% recall that
% $\Pi: \Spin_{n+1} \to \Flag_{n+1}$
the universal cover of the space of flags.
Let $\cL_n(z_0;z_1)$ be the space 
of such curves $\Gamma$ with
prescribed endpoints $\Gamma(0) = z_0$, $\Gamma(1) = z_1$.
The aim of this paper is to
determine the homotopy type of the spaces 
$\cL_3(z_0;z_1)$ for all $z_0, z_1 \in \Spin_4
% Recall that $\Spin_4
= \Ss^3 \times \Ss^3 \subset \HH \times \HH$;
% where
$\HH$ is the ring of quaternions.
As a corollary, we obtain the homotopy type
of the space of closed locally convex curves in either
$\Ss^3$ or $\PP^3$.

There are many previous papers addressing related questions,
some of them by the authors.
An early such paper solves the corresponding problem for $n = 2$
(i.e., for curves in $\Ss^2$).
Another previous result (with B. Shapiro) reduces the problem
to $z_0 = 1$ and $z_1 \in \Quat_4$ where $\Quat_4 \subset \Spin_4$
is a finite group of order $16$
with center $Z(\Quat_4) = \{(\pm 1,\pm 1)\}$.
A more recent paper shows that for
$z_1 \in \Quat_4 \smallsetminus Z(\Quat_4)$
we have a homotopy equivalence
$\cL_3(1;z_1) \approx \Omega\Spin_4$.
In this paper we compute the homotopy type of $\cL_3(1;z_1)$
for $z_1 \in Z(\Quat_4)$:
it is equivalent to the wedge of
$\Omega\Spin_4$ with an infinite countable family of spheres
(as for the case $n = 2$).

The structure of the proof can be compared to that of the case $n = 2$
but some of the steps require the creation of new theories,
involving algebra and combinatorics.
% in a way which was not visible in the previous case.
% In both cases
We construct explicit subsets $\cY \subset \cL_n(z_0;z_1)$
for which the inclusion $\cY \subset \Omega\Spin_{n+1}(z_0;z_1)$
is a homotopy equivalence.
For $n = 2$, there is a simple geometric description of $\cY$;
for $n = 3$, the far less natural construction is based on the theory of
\textit{itineraries} of such curves.
The itinerary of a curve in $\cL_n(1;z_1)$
is a finite word in the alphabet $S_{n+1} \smallsetminus \{e\}$
of nontrivial permutations.
\end{abstract}

\medskip

\section{Introduction}
\label{section:intro}

Let $J \subset \RR$ be an interval.
A sufficiently smooth curve $\gamma: J \to \Ss^n \subset \RR^{n+1}$
is \emph{(positive) locally convex} 
\cite{Alves-Saldanha, Saldanha3, Saldanha-Shapiro}
or \emph{(positive) nondegenerate} 
\cite{Khesin-Ovsienko, Khesin-Shapiro2, Little} 
% Goulart-Saldanha,
if it satisfies 
\[ \forall t \in J, \;
\det(\gamma(t),\gamma'(t),\ldots,\gamma^{(n)}(t))>0. \]
We are interested in the space of closed locally convex curves
and in the related spaces of curves with prescribed initial and final jet.

A smooth curve $\gamma: [0,1] \to \Ss^2$ is locally convex
if and only if it is an immersion
with positive geodesic curvature at every point
$t \in [0,1]$.
The main result in \cite{Saldanha3} can be stated as Fact~\ref{fact:mainS2} below.

Let $\cL_{2}(\per)$ be the space of closed locally convex curves
$\gamma: [0,1] \to \Ss^2$,
or, equivalently,
the space of $1$-periodic locally convex curves
$\gamma: \RR \to \Ss^2$.
Let $\cL_{2}(I;I)$ be the space of
locally convex curves $\gamma: [0,1] \to \Ss^2$ satisfying
\[ \gamma(0) = \gamma(1) = e_1, \qquad
\gamma'(0) = \gamma'(1) = e_2. \]

\begin{fact}
\label{fact:mainS2}
The space $\cL_{2}(\per)$ is homeomorphic to $\cL_{2}(I;I) \times \SO_3$.
The space $\cL_{2}(I;I)$ has three connected components, one contractible;
the other two have the homotopy type of the following spaces:
\begin{gather*}
\Omega\Ss^3 \vee \Ss^2 \vee \Ss^6 \vee \Ss^{10} \vee \cdots, \\
\Omega\Ss^3 \vee \Ss^4 \vee \Ss^8 \vee \Ss^{12} \vee \cdots.
\end{gather*}
\end{fact}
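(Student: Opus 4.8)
The plan is to run the Frenet-frame strategy that underlies the whole program. To a locally convex $\gamma:[0,1]\to\Ss^2$ one attaches its Frenet frame $\Frenet_\gamma:[0,1]\to\SO_3$ (columns $\gamma$, $\gamma'/|\gamma'|$, and their cross product); local convexity is exactly the condition that $\Frenet_\gamma(t)^{-1}\Frenet_\gamma'(t)$ lie, for every $t$, in a fixed open cone of $\so_3$ (the Frenet cone). A smoothly closed curve has $\Frenet_\gamma(1)=\Frenet_\gamma(0)$, so $\SO_3$ acts freely on $\cL_2(\per)$ by left translation of the frame, and normalizing $\Frenet_\gamma(0)=I$ gives a global slice canonically identified with $\cL_2(I;I)$; hence $\cL_2(\per)\approx\cL_2(I;I)\times\SO_3$ at once, and everything reduces to $\cL_2(I;I)$.

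Next I would lift $\Frenet_\gamma$ to the universal cover, obtaining $\Gamma:[0,1]\to\Ss^3=\Spin_3$ with $\Gamma(0)=1$. Since $\Frenet_\gamma(1)=I$ one has $\Gamma(1)\in\{1,-1\}$, which already splits $\cL_2(I;I)$ into two sectors; the \emph{convex} curves, those bounding an embedded convex disc in an open hemisphere, form a third piece, and I would show, following Little, that it deformation retracts onto a fixed round circle, convexity being preserved along the retraction. So the content is the homotopy type of the two non-convex components, and the claim is that each is a wedge of $\Omega\Ss^3=\Omega\Spin_3$ with a countable family of spheres, the dimensions being $2,6,10,\dots$ in the sector $\Gamma(1)=-1$ and $4,8,12,\dots$ in the sector $\Gamma(1)=1$.

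For these components I would argue by a complexity filtration of the inclusion $\cL_2(I;I)\hookrightarrow\Omega\Spin_3(1;z_1)\simeq\Omega\Ss^3$ (one value $z_1\in\{1,-1\}$ at a time): stratify according to how the osculating (derivative) flag of $\Gamma$ degenerates along $[0,1]$, recording a discrete invariant of ``multiplicity'' $\mu$ — the $n=2$ forerunner of the itineraries used here for $n=3$. The bottom stratum $\cY=\{\mu\text{ minimal}\}$ should be shown to include into $\Omega\Ss^3$ as a homotopy equivalence, the mechanism being \emph{loop addition}: every locally convex curve admits extra small loops, and sliding and spreading such loops produces both the comparison maps and the required homotopies. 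Passing from $\{\mu\le m\}$ to $\{\mu\le m+1\}$ then attaches a single handle whose index increases by $4$ with $m$ (so a base-case computation pins the two progressions above), and one checks the attaching maps are null-homotopic — using the James cell structure on $\Omega\Ss^3$ (one cell in each even dimension) together with an explicit loop-sliding homotopy that detaches each extra cell — so that the handles split off as free spheres sitting on top of the $\Omega\Ss^3$ contributed by $\cY$.

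The two steps I expect to be genuinely hard are, first, the loop-spreading technology — upgrading ``adding and sliding small loops'' from a surjection on homotopy groups to an actual homotopy equivalence $\cY\simeq\Omega\Ss^3$, which needs careful transversality for the degeneration strata and an Ascoli-type compactness argument in the function space of curves; and second, the precise bookkeeping of the stratum codimensions yielding $2,6,10,\dots$ and $4,8,12,\dots$ together with the vanishing of the attaching maps, which requires understanding exactly how $\cL_2$ sits inside the path space near a non-generic curve. By comparison, the $\pi_0$ count and the contractibility of the convex component are routine.
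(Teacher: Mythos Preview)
This statement is not proved in the paper: it is quoted as a \emph{Fact}, i.e., the main result of \cite{Saldanha3}, and the present paper only re-proves the analogous $n=3$ statement (Theorem~\ref{theo:mainS3}). So there is no ``paper's own proof'' to compare against for this item. That said, your outline matches quite closely both the strategy of \cite{Saldanha3} and the $n=3$ argument carried out here: one identifies a large open subset $\cY$ (here $\tilde\cY_3$) whose inclusion into $\Omega\Spin_{n+1}$ is a weak equivalence, and the complement is a disjoint union of contractible collared submanifolds $\cM_{\mu}$ of even codimension; putting them back one by one wedges on spheres because the attaching maps are null (this is exactly Lemma~\ref{lemma:bouquetstep} in the present paper). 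So the skeleton you propose is the right one.

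Two points deserve correction or sharpening. First, the claim that ``normalizing $\Frenet_\gamma(0)=I$ gives a global slice'' and hence a homeomorphism $\cL_2(\per)\cong\cL_2(I;I)\times\SO_3$ is too quick. As the paper makes explicit in the proof of Theorem~\ref{theo:mainS3}, the obvious map $\Phi(\Gamma)=(Q^{-1}\Gamma,Q)$ is \emph{not} surjective: a curve in $\cL_2(I;I)$ only has matching Frenet frame at the endpoints, not matching higher jets, so its $1$-periodic extension need not be $H^r$. One needs a smoothening construction to build the inverse $\Psi$, and what one gets directly is a homotopy equivalence; the homeomorphism then follows because both sides are Hilbert manifolds (Palais). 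Second, the phrase ``attaching maps are null-homotopic\dots\ using the James cell structure on $\Omega\Ss^3$'' is not how the argument runs. The actual mechanism is that the explicit maps $h_\mu$ (your handles) are \emph{loose} after a single loop insertion, hence null in $\Omega\Spin_3$; one does not need to invoke James at all, and conversely the James filtration does not by itself see the submanifolds $\cM_\mu$. The nullity is established by the loose/tight dichotomy (cf.\ Lemma~\ref{lemma:alphadois} here), not by a cell-structure comparison.
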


A smooth curve $\gamma: [0,1] \to \Ss^3$ is \emph{locally convex}
if it is an immersion with nonzero geodesic curvature
and positive torsion at every point $t \in [0,1]$.
The main result in the present paper is a corresponding result for $\Ss^3$,
requiring for its proof the development of several new tools.

Let $\cL_{3}(\per)$ be the space of closed locally convex curves
$\gamma: [0,1] \to \Ss^3$.
Let $\cL_{3}(I;I)$ be the space of
locally convex curves $\gamma: [0,1] \to \Ss^3$
satisfying
\[ \gamma(0) = \gamma(1) = e_1, \qquad
\gamma'(0) = \gamma'(1) = e_2, \qquad
\gamma''(0) = \gamma''(1) = e_3. \qquad
 \]
Let $\cL_{3}(I;-I)$ be the space of
locally convex curves $\gamma: [0,1] \to \Ss^3$
satisfying
\[ \gamma(0) = -\gamma(1) = e_1, \qquad
\gamma'(0) = -\gamma'(1) = e_2, \qquad
\gamma''(0) = -\gamma''(1) = e_3. \qquad
 \]

\begin{theo}
\label{theo:mainS3}
The space $\cL_{3}(\per)$ is homeomorphic to $\cL_{3}(I;I) \times \SO_4$.
The space $\cL_{3}(I;I)$ has two connected components
that have the homotopy type of the following spaces:
\begin{gather*}
\Omega(\Ss^3 \times \Ss^3) \vee
\Ss^2 \vee \Ss^6 \vee \Ss^6 \vee \Ss^{10} \vee \Ss^{10} \vee \Ss^{10}
\vee \cdots, \\
\Omega(\Ss^3 \times \Ss^3) \vee
\Ss^4 \vee \Ss^8 \vee \Ss^8 \vee \Ss^{12} \vee \Ss^{12} \vee \Ss^{12}
\vee \cdots.
\end{gather*}
The space $\cL_{3}(I;-I)$ has three connected components,
one contractible;
the other two have the homotopy type of the following spaces:
\begin{gather*}
\Omega(\Ss^3 \times \Ss^3) \vee
\Ss^2 \vee \Ss^6 \vee \Ss^6 \vee \Ss^{10} \vee \Ss^{10} \vee \Ss^{10}
\vee \cdots, \\
\Omega(\Ss^3 \times \Ss^3) \vee
\Ss^4 \vee \Ss^4 \vee \Ss^{8} \vee \Ss^{8} \vee \Ss^{8}
\vee \Ss^{12} \vee \Ss^{12} \vee \Ss^{12} \vee \Ss^{12} \vee \cdots.
\end{gather*}
\end{theo}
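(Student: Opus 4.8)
The plan is to follow the architecture of the proof of Fact~\ref{fact:mainS2} for $n=2$: first reduce everything to computing the homotopy type of $\cL_3(1;z_1)$ for $z_1$ in the center, then build inside each such space an explicit ``scaffold'' $\cY$ accounting for the $\Omega\Spin_4$ summand, and finally glue on the strata of convex curves, which produce the wedges of spheres. For the reduction: the evaluation map $\cL_3(\per)\to\SO_4$ sending a closed locally convex curve to its Frenet frame at $t=0$ is a locally trivial fibre bundle with fibre $\cL_3(I;I)$, and it is globally trivial because $\SO_4$ acts on $\Ss^3$ preserving local convexity and acts on the frame at $t=0$ by left translation; hence $\cL_3(\per)\cong\cL_3(I;I)\times\SO_4$. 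If $\gamma\in\cL_3(I;I)$, its osculating flag at $t=1$ is the standard flag, so the Frenet lift $\Gamma\in\Spin_4$ normalised by $\Gamma(0)=1$ has $\Gamma(1)$ in the kernel of $\Spin_4\to\SO_4$; thus $\cL_3(I;I)=\cL_3(1;1)\sqcup\cL_3(1;-1)$, and likewise $\cL_3(I;-I)=\cL_3(1;z')\sqcup\cL_3(1;z'')$ where $z',z''$ are the two $\Spin_4$-lifts of $-I_4$. Since $1,-1,z',z''$ all lie in $Z(\Quat_4)=Z(\Spin_4)$, and since the earlier cited papers both reduce arbitrary endpoints to $z_1\in\Quat_4$ and settle $\cL_3(1;z_1)\approx\Omega\Spin_4$ for $z_1\in\Quat_4\smallsetminus Z(\Quat_4)$, it remains only to identify the homotopy type of $\cL_3(1;z_1)$ for these four central $z_1$, bearing in mind $\Omega\Spin_4(1;z_1)\simeq\Omega\Spin_4=\Omega(\Ss^3\times\Ss^3)$.

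Next, to each $\Gamma\in\cL_3(1;z_1)$ I attach its itinerary: the finite word in the $23$-letter alphabet $S_4\smallsetminus\{e\}$ recording the succession of Bruhat cells of $\Flag_4$ traversed by a generic small perturbation of $\Pi\circ\Gamma$. Call $\Gamma$ non-convex (``loose'') when its itinerary is long enough to be saturated — every cell visited, so that loops can be added and absorbed freely — and let $\cY=\cY_{z_1}\subset\cL_3(1;z_1)$ be the subspace of non-convex curves. The key claim is that the inclusion $\cY\hookrightarrow\Omega\Spin_4(1;z_1)$ into the space of all continuous paths is a homotopy equivalence, so $\cY\simeq\Omega\Spin_4$. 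As in $n=2$ and in the recent $n=3$ paper, this is proved by the ``adding loops'' technique: a continuous family of paths is first $C^0$-approximated by smooth curves, then a fixed family of small spiralling segments is grafted into every member, rendering all curves locally convex and non-convex in the above sense; the subspace of non-convex curves admitting a prescribed graft deformation-retracts onto $\cY$, and grafting does not change the ambient path space up to homotopy. For non-central $z_1$ one even has $\cY=\cL_3(1;z_1)$, which is why that case produced only $\Omega\Spin_4$; for central $z_1$ the complement $\cL_3(1;z_1)\smallsetminus\cY$ is non-empty and carries the extra cells.

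The curves outside $\cY$ are the multiconvex curves, organised by multiplicity $m=1,2,3,\dots$, together with the genuinely convex curves. Filtering $\cL_3(1;z_1)$ by $m$, for each $m$ one studies the CW-pair formed by $\cY$ (together with the lower strata) and the closure of the stratum of multiplicity exactly $m$; up to homotopy this closure is a space of convex Frenet curves of winding $m$ ending at $z_1$, which I would compute directly and show to be a wedge of spheres whose dimensions increase by $4$ with $m$ — the $n=2$ periodicity, shifted according to $\dim\Spin_4$ and to the parity imposed by $z_1$. A Mayer--Vietoris argument, exactly as in the proof of Fact~\ref{fact:mainS2}, then shows that attaching the $m$-th stratum contributes precisely this wedge of spheres as a new wedge summand; summing over $m$ yields the stated infinite wedges, with multiplicities $\Ss^2,\Ss^6,\Ss^6,\dots$ or $\Ss^4,\Ss^8,\Ss^8,\dots$ (and the extra copies $\Ss^4,\Ss^4,\dots$ in one case) according to which itineraries of multiconvex curves are compatible with the central endpoint $z_1$. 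This dependence produces the four distinct answers; and in the single case where the convex curve is an isolated point of $\cL_3(1;z_1)$ — the Frenet endpoint of the once-traversed convex curve, which lies in $\cL_3(I;-I)$ — it forms its own contractible connected component, accounting for the ``three components, one contractible''.

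The main obstacle is this last step. For $n=2$ one identifies the spaces of multiconvex curves using two-dimensional differential geometry — geodesic curvature, rotation number, Gauss--Bonnet bookkeeping of convex arcs — none of which survives in $\Ss^3$, where one must instead track a flag-valued Frenet curve. So the whole analysis has to be carried out combinatorially, through itineraries over the $23$-letter alphabet $S_4\smallsetminus\{e\}$: deciding which words are realised by honest positive-torsion curves with a prescribed central endpoint, and proving that the associated strata are exactly wedges of spheres of the claimed dimensions and multiplicities, with no hidden cells and with the correct connectivity. That combinatorial-geometric bookkeeping is where essentially all the difficulty of the paper is concentrated.
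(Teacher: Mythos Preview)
Your architecture matches the paper's: decompose $\cL_3(I;\pm I)$ into $\cL_3(1;z_1)$ for central $z_1$, identify an open subset $\tilde\cY(z_1)$ whose inclusion into $\Omega\Spin_4(1;z_1)$ is a weak equivalence (Fact~\ref{fact:tildeY}, quoted from prior work), and analyze the complement. But your description of that complement is where the gap lies. You index it by a single multiplicity $m$ and assert that each stratum is itself a wedge of spheres; in the paper the complement $\cM$ decomposes instead into pieces $\cM_{\mu_0,\mu_1}$ indexed by \emph{pairs} $(\mu_0,\mu_1)\in\NN^2$, and each $\cM_{\mu_0,\mu_1}$ is a closed \emph{contractible} topological submanifold of codimension $2\mu_0+2\mu_1$ (Proposition~\ref{prop:M}). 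The bi-grading is not cosmetic: it reflects two genuinely different kinds of non-transversality---the letters of $M=\{aba,bacb,bcb,cba,\eta\}$ carry a value $\mu(\sigma)\in\{(1,0),(0,1),(1,1)\}$---and it is exactly this that produces the growing multiplicities in the wedge. For fixed codimension $2k$ the number of spheres is the number of pairs $(\mu_0,\mu_1)$ with $\mu_0+\mu_1=k$ and parities fixed by $z_1$, giving $1,2,3,\ldots$ copies. A single-parameter ``multiconvex of winding $m$'' picture imported from $n=2$ cannot see this and would not yield those multiplicities.

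The bulk of the paper (Sections~\ref{section:closed}--\ref{section:maps}) is then: proving each $\cM_{\mu_0,\mu_1}$ is a closed topological submanifold even though it is a union of strata with different itineraries (for instance $\cM_{1,0}=\cL_3[[aba]]\cup\cL_3[[bacb]]\cup\cL_3[[bcb]]$, and $\cM_{1,1}$ glues $\cL_3[[aba][cba]]$, $\cL_3[\eta]$, $\cL_3[[cba][aba]]$); proving contractibility; and constructing explicit maps $h_{\mu_0,\mu_1}:\Ss^{2\mu_0+2\mu_1}\to\cL_3$ that are null in $\Omega\Spin_4$, meet $\cM_{\mu_0,\mu_1}$ transversally once, and miss every other $\cM_{\mu_0',\mu_1'}$. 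A tubular-neighborhood argument (Lemma~\ref{lemma:bouquetstep}) then shows that re-inserting each $\cM_{\mu_0,\mu_1}$ wedges on exactly one sphere. One smaller correction: your $\tilde\cY$ is not ``curves with saturated itinerary'' but the set of curves whose itinerary contains at least one letter from the explicit set $\tilde Y=S_4\smallsetminus(\{e\}\cup M)$; it is this precise definition that makes the complement decompose as $\bigsqcup_{(\mu_0,\mu_1)}\cM_{\mu_0,\mu_1}$.
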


\begin{remark}
\label{remark:oddcurve}
For odd $n$, a curve $\gamma: [0,1] \to \PP^n$
is (positive) locally convex if either lift 
$\tilde\gamma: [0,1] \to \Ss^n$ is (positive) locally convex.
The space of closed locally convex curves
$\gamma: [0,1] \to \PP^3$ can be naturally decomposed
into two disjoint subpaces:
those which are homotopic to a point and those which are not.
The first one is homeomorphic to $\cL_3(I;I) \times (\SO_4/\{\pm I\})$;
the second one is homeomorphic to $\cL_3(I;-I) \times (\SO_4/\{\pm I\})$.
\end{remark}

%  Lifting curves from $\PP^3$ to $\Ss^3$
%  One of them is identified
%  The space $\cL_{3}(\per)$ as in Theorem \ref{theo:mainS3} above;
%  the other one is the set $\cL_{3}(\antiper)$ of locally convex curves
%  $\gamma: [0,1] \to \PP^3$ which are closed and
%  not homotopic to a constant.
%  The space $\cL_{3,\antiper}$ is homeomorphic to $\cL_{3,-1} \times \SO_4$.

\begin{remark}
\label{remark:convex}
Let $J \subset \RR$ be a compact interval.
A smooth curve $\gamma: J \to \Ss^3 \subset \RR^4$
is \textit{convex} (or \textit{disconjugate} \cite{Shapiro-Shapiro})
if for every hyperplane $H \subset \RR^4$
with $0 \in H$
the curve $\gamma$ crosses $H$ in at most $3$ points.
Crossings are counted with multiplicity and
only count for $t$ in the interior of $J$.
If $\gamma$ is convex then $\gamma$
is either positive or negative locally convex
(depending on the sign of torsion).
Conversely, if $\gamma$ is positive locally convex
then its restrictions to sufficiently short compact intervals $J$
are convex (justifying the name).
A smooth curve $\gamma: J \to \PP^3$ is convex if there exists
a smooth convex curve $\tilde\gamma: J \to \Ss^3$
with $\gamma = \Pi \circ \tilde\gamma$,
where $\Pi: \Ss^3 \to \PP^3$ is the usual double cover.
There exist closed convex curves in $\PP^3$ but not in $\Ss^3$.
\end{remark}

The following fact gives an application
of Theorem~\ref{theo:mainS3} above
for linear ordinary differential equations
\cite{Khesin-Ovsienko, Saldanha-Tomei}.
The reader should compare this discussion with Sturm theory.

A function $h: \RR \to \RR$
is \textit{$1$-periodic} (resp. \textit{$1$-antiperiodic})
if $h(t+1) = h(t)$ (resp. $h(t+1) = -h(t)$) for all $t \in \RR$.
For a triple $(c_0,c_1,c_2)$ of $1$-periodic functions,
consider the linear ODE
\[ y^{(4)}(t) + c_2(t) y''(t) + c_1(t) y'(t) + c_0(t) y(t) = 0. \]
The triple $(c_0,c_1,c_2)$ is
\textit{solution-periodic} (resp. \textit{solution-antiperiodic})
% \textit{positive} (resp. \textit{negative})
if all solutions $y$ of the ODE above are
$1$-periodic (resp. $1$-antiperiodic).
Let $\cP_{+}$ (resp. $\cP_{-}$) be the set of
solution-periodic (resp. solution-antiperiodic) triples.
% positive (resp. negative) triples.
The spaces $\cL_{3}(I;\pm I)$ are as in Theorem~\ref{theo:mainS3}.

\begin{fact}
\label{fact:ode}
The space $\cP_{+}$ is homotopy equivalent to $\cL_{3}(I;I)$;
the space $\cP_{-}$ is homotopy equivalent to $\cL_{3}(I;-I)$.
\end{fact}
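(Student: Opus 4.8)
The plan is to run the classical passage between a linear homogeneous ODE and its fundamental system of solutions, in the form used in \cite{Saldanha-Tomei}, while keeping track of the two possible values $\pm I$ of the monodromy.

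First I would construct forward maps $\Phi_{\pm}\colon\cP_{\pm}\to\cL_3(I;\pm I)$. Given $(c_0,c_1,c_2)$, let $y_0,y_1,y_2,y_3$ be the fundamental system with $y_j^{(i)}(0)=\delta_{ij}$, and set $\gamma=(y_0,y_1,y_2,y_3)\colon\RR\to\RR^4$, so that $\gamma^{(i)}(0)=e_{i+1}$. Because the ODE has no $y'''$ term, Abel's formula makes the Wronskian $\det(\gamma,\gamma',\gamma'',\gamma''')$ constant, hence identically $1$; in particular $\gamma$ never vanishes and is a (positively) locally convex curve in $\RR^4\smallsetminus\{0\}$ with standard $3$-jet at $0$. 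The monodromy matrix $M\in\SL_4(\RR)$ defined by $\gamma(t+1)=M\gamma(t)$ equals $I$ exactly when all solutions are $1$-periodic (the positive case) and $-I$ exactly when all solutions are $1$-antiperiodic (the negative case). Radial projection $\tilde\gamma=\gamma/|\gamma|$ takes locally convex curves in $\RR^4\smallsetminus\{0\}$ to locally convex curves in $\Ss^3$ \cite{Saldanha3}, and one checks that the osculating frame of $\tilde\gamma$ at the endpoints is standard; so $\tilde\gamma\in\cL_3(I;I)$ when $M=I$ and $\tilde\gamma\in\cL_3(I;-I)$ when $M=-I$. This assignment is clearly continuous.

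Next I would produce a homotopy inverse $\Psi_{\pm}$. Given $\tilde\gamma\in\cL_3(I;\pm I)$, rescale to $\gamma=W^{-1/4}\tilde\gamma$ with $W=\det(\tilde\gamma,\tilde\gamma',\tilde\gamma'',\tilde\gamma''')>0$; then $\gamma$ has unit Wronskian, and it is $1$-periodic (resp.\ $1$-antiperiodic) since $\tilde\gamma$ is and $W$ is $1$-periodic in both cases. Because the Wronskian is constant, $\gamma^{(4)}(t)$ is a combination of $\gamma(t),\gamma'(t),\gamma''(t)$ alone, which yields unique continuous $1$-periodic coefficients $(c_0,c_1,c_2)$ with $\gamma^{(4)}+c_2\gamma''+c_1\gamma'+c_0\gamma\equiv0$; this triple is positive (resp.\ negative) because the monodromy of $\gamma$ is $I$ (resp.\ $-I$). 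One then checks $\Psi_{\pm}\circ\Phi_{\pm}=\mathrm{id}$: the rescaling in the definition of $\Psi_{\pm}$ recovers $\gamma$ from $\gamma/|\gamma|$, since rescaling a curve multiplies its Wronskian by the fourth power of the scalar. In the other order, $\Phi_{\pm}(\Psi_{\pm}(\tilde\gamma))$ is the radial projection of $A^{-1}\gamma$, where $A=[\gamma(0)\mid\gamma'(0)\mid\gamma''(0)\mid\gamma'''(0)]$ is upper triangular with positive diagonal and depends continuously on $\tilde\gamma$; contracting the group of such matrices to $I$ deforms $A^{-1}\gamma$ through locally convex curves with standard endpoint frames, giving $\Phi_{\pm}\circ\Psi_{\pm}\simeq\mathrm{id}$. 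Hence each $\Phi_{\pm}$ is a homotopy equivalence, as claimed.

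I expect the main obstacle to be a compatibility issue rather than any of the above. The space $\cP_{\pm}$ consists of globally smooth $1$-periodic triples, whereas a curve in $\cL_3(I;\pm I)$ need only match its jet to low order at the two endpoints, so $\Psi_{\pm}$ as described can land in ODEs whose coefficients are not smoothly $1$-periodic; there is also a ``unit speed at the endpoints'' normalization implicit in $\cL_3(I;\pm I)$ to reconcile. The remedy is to first deformation-retract $\cL_3(I;\pm I)$ onto the subspace of curves that are flat at the endpoints (a standard collar/smoothing argument) and to compose with a canonical reparametrization fixing the speed normalization, all done compatibly with local convexity and with the deformation of the previous paragraph. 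Carrying this out is the one step that is not routine; once it is done, everything else is the ODE--curve dictionary of \cite{Saldanha-Tomei}.
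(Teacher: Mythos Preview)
The paper does not prove Fact~\ref{fact:ode} at all: it is stated as background, with the citations \cite{Khesin-Ovsienko, Saldanha-Tomei} supplied just before the statement, and the paper immediately moves on. So there is no ``paper's own proof'' to compare against.

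Your proposal is essentially the standard ODE--curve dictionary from \cite{Saldanha-Tomei}, specialized to order $4$ and split according to the sign of the monodromy, and it is correct in outline. The key computations (Abel's formula forcing constant Wronskian, the rescaling $\gamma=W^{-1/4}\tilde\gamma$ giving unit Wronskian because the Wronskian is homogeneous of degree $4$ under scalar multiplication, and the contraction of the upper triangular positive-diagonal matrix $A$) all check out. You have also correctly identified the one genuine technical point: curves in $\cL_3(I;\pm I)$ are only required to match finite jets at the endpoints, so the na\"\i ve $\Psi_\pm$ lands in coefficient triples that need not glue smoothly at $t=0$, and a collar/smoothing retraction is needed. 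This is exactly the kind of issue the present paper handles elsewhere (e.g.\ in the proof of Theorem~\ref{theo:mainS3}, and in the references cited there), so your proposed fix is appropriate. One small clarification: the endpoint condition ``$\gamma''(0)=e_3$'' in the definition of $\cL_3(I;I)$ should be read as a Frenet-frame condition (Gram--Schmidt of $(\gamma,\gamma',\gamma'',\gamma''')$ equals $I$), not literally, since a curve on $\Ss^3$ satisfies $\langle\gamma,\gamma''\rangle=-|\gamma'|^2$; your argument already treats it this way when you speak of the osculating frame being standard.
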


\bigbreak

Section~\ref{section:context} puts the new results,
such as Theorem~\ref{theo:mainS3},
into the context of previous work.
In the process we are introduced to notation
which allows us to give more precise statements,
such as Theorem~\ref{theo:L3},
and to isolate the main difficulties, 
as in Proposition~\ref{prop:M}.
Section \ref{section:review} contains a review 
of notation and results from previous work,
including work by the authors.
This includes a somewhat detailed description
of the stratification $\cL_3[w] \subset \cL_3$
and of the dual CW complex $\cD_3$;
a far more detailed description of both the stratification
and the CW complex is given in \cite{Goulart-Saldanha1,Goulart-Saldanha-cw}.
This stratification is a prerequisite
for the construction of the subsets
$\cM_{\mu_0,\mu_1} \subset \cL_3$;
a quick description of these objects is given in Section~\ref{section:context}.
In Section \ref{section:positivity} we state and prove
a few necessary results about intersections of Bruhat cells.
We also review consequences of positivity.
The lemmas and propositions are stated and proved for arbitrary dimension
because restricting ourselves to low dimension would sound artificial.
On the other hand,
Examples~\ref{example:z0z1z2}, \ref{example:z0z1z2w}
and \ref{example:z0z1z2x} focus on the cases
which are directly relevant in later sections.  

In Section \ref{section:closed} we study topological properties
of the stratification and prove that
each $\cM_{\mu_0,\mu_1} \subset \cL_3$ is a closed subset.
Sections \ref{section:bacb} and \ref{section:eta}
are dedicated to proving that 
$\cM_{\mu_0,\mu_1} \subset \cL_3$
is a collared topological submanifold.
The case $\mu_0 = 1$ and $\mu_1 = 0$ is addressed in Section \ref{section:bacb}
and the case $\mu_0 = \mu_1 = 1$ in Section \ref{section:eta}.
The general case then follows easily (also in Section \ref{section:eta}).
In Section \ref{section:contractibility} we prove that 
each set $\cM_{\mu_0,\mu_1}$ is contractible:
the proof is based on considering a related finite dimensional problem.
The careful reader will notice that these claims
are all part of the statement of
Proposition \ref{prop:M}.
The maps $h_{\mu_0,\mu_1}$, also mentioned in 
Proposition \ref{prop:M},
are constructed in Section \ref{section:maps};
in this section we also complete the proof of
Proposition \ref{prop:M}.
The proofs of Theorems \ref{theo:mainS3} and \ref{theo:L3}
are now short and presented in Section \ref{section:theo}.

\bigbreak

The authors would like to thank Boris Shapiro for helpful conversations,
particularly concerning the study of examples such as 
the one shown in Figure~\ref{fig:bacb-2}.
Support from CNPq, CAPES and Faperj (Brazil) 
is gratefully acknowledged.

\bigbreak

\section{Context and statements}
\label{section:context}

In this section we put the results of the present paper
in the context of related work.
We also give more precise (but also more technical)
versions of the main results. For instance,
Theorem~\ref{theo:L3} is a more precise version of Theorem~\ref{theo:mainS3}.

\smallskip

It is useful to give an alternate description
of locally convex curves as curves in one of the groups
$\SO_{n+1}$ or $\Spin_{n+1}$.
A locally convex curve
$\gamma: J \to \Ss^n$ can be associated with
$\Gamma = \Frenet_{\gamma}: J \to \SO_{n+1}$ 
where the column-vectors of $\Gamma(t)$
are the result of applying the 
Gram-Schmidt algorithm to the ordered basis 
$(\gamma(t),\gamma'(t),\ldots,\gamma^{(n)}(t))$ of $\RR^{n+1}$. 
The curve $\Gamma$ can then be lifted 
to the double cover $\Spin_{n+1}$.
The curve $\Gamma: J \to \Spin_{n+1}$
is also called locally convex. 

% In this paper we focus on the case $n = 3$.
Recall that the double cover of $\SO_4$
is $\Spin_4 = \Ss^3 \times \Ss^3 \subset \HH \times \HH$,
where $\HH$ is the ring of quaternions.
Indeed, the universal cover $\Pi: \Ss^3 \times \Ss^3 \to \SO_4$ is
% defined by
\begin{equation}
\label{equation:Pi}
\Pi((z_l,z_r))w = z_l w z_r^{-1}, \qquad w \in \HH;
\end{equation}
here we identify $\RR^4 = \HH =
\{ x_1 + x_2 \bi + x_3 \bj + x_4 \bk; \; x_1, x_2, x_3, x_4 \in \RR \}$.

The corresponding curve $\Gamma: J \to \Spin_4$
is obtained as above and
its logarithmic derivative is of the form 
\begin{equation}
\label{equation:logdiff}
(\Gamma(t))^{-1}\Gamma'(t)=
\sum_{1 \le j\le 3}\kappa_j(t)\fa_j \in \spin_4, 
\end{equation} 
for positive functions $\kappa_1,\kappa_2,\kappa_3:J\to(0,+\infty)$. 
Here $\fa_j$ can be interpreted as a matrix in $\so_4$
(as in \cite{Goulart-Saldanha0})
or as a pair of quaternions
(as in \cite{Alves, Alves-Saldanha}):
\begin{equation}
\label{equation:faj}
\begin{gathered}
\fa_1 = \fa = \frac12 (\bi,-\bi) = e_2 e_1^\top - e_1 e_2^\top, \quad
\fa_2 = \fb = \frac12 (\bk,-\bk) = e_3 e_2^\top - e_2 e_3^\top, \\
\fa_3 = \fc = \frac12 (\bi,\bi) = e_4 e_3^\top - e_3 e_4^\top.
\end{gathered}
\end{equation}
Following \cite{Goulart-Saldanha0}, we
define homomorphisms $\alpha_j: \RR \to \Spin_4$ by
$\alpha_j(\theta) = \exp(\theta\fa_j)$.
We also write
\begin{equation}
\label{equation:acutegravehat}
\acute a_j =
\alpha_j\left(\frac{\pi}{2}\right), \quad
\grave a_j = (\acute a_j)^{-1} =
\alpha_j\left(- \frac{\pi}{2}\right), \quad
\hat a_j = (\acute a_j)^2 =
\alpha_j\left({\pi}\right), 
\end{equation}
so that we have $\hat a = \hat a_1 = (\bi,-\bi)$,
$\hat b = \hat a_2 = (\bk,-\bk)$
and $\hat c = \hat a_3 = (\bi,\bi)$.
The group $\Quat_4$ has order $16$ and
is generated by $\hat a$, $\hat b$ and $\hat c$.
For $\Pi$ as in Equation~\eqref{equation:Pi},
we have
$\Pi(\hat a) = \diag(-1,-1,1,1)$,
$\Pi(\hat b) = \diag(1,-1,-1,1)$ and
$\Pi(\hat c) = \diag(1,1,-1,-1)$.
The center of $\Quat_4$ is
$Z(\Quat_4) = \Pi^{-1}[\{\pm I\}] = \{ (\pm 1,\pm 1) \} =
\{\pm 1, \pm \hat a\hat c\}$
with $1 = (+1,+1)$, $-1 = (-1,-1)$,
$\hat a\hat c = (-1,+1)$ and $-\hat a\hat c = (+1,-1)$.
%  \cL\Ss^3((+1,-1)) = \cL_3(1;-\hat a\hat c), \qquad
%  \cL\Ss^3((-1,+1)) = \cL_3(1;\hat a\hat c).

Given $z_0,z_1\in\Spin_{4}$, 
let $\cL_3(z_0;z_1)$ denote the space of 
locally convex curves $\Gamma:[0,1]\to\Spin_{4}$ 
with endpoints 
$\Gamma(0)=z_0$ and $\Gamma(1)=z_1$. 
More precisely,
for sufficiently large $r\in\NN$
% ^\ast = \{1, 2, 3, \ldots\}$
we denote by
$\cL_3^{[C^r]}(z_0;z_1) \subset C^r([0,1];\HH^2)$
the space of such curves
of differentiability class $C^r$.
As will be discussed in Remark~\ref{remark:sufficientlysmooth} below, 
% Lemma~1.11 in \cite{Saldanha-Zuhlke1}
% and in Subsection~2.2 in \cite{Goulart-Saldanha1} (among others),
the value of $r$ does not affect the homotopy type of the space,
so we drop the superscript and write merely $\cL_3(z_0;z_1)$.
Left multiplication gives a homeomorphism
between $\cL_3(z_0;z_1)$ and $\cL_3(1;z_0^{-1}z_1)$:
from now on we restrict our attention to spaces of the form $\cL_3(1;z)$,
$z \in \Spin_4$.
%  Let $\Quat_4 \subset \Ss^3 \times \Ss^3$
%  be the finite group generated by
%  $(1,-1)$, $(\bi,\bi)$ and $(\bj,\bj)$;
%  alternatively, $\Quat_4 = \Pi^{-1}[\Diag_4^{+}]$
%  where $\Pi: \Spin_4 \to \SO_4$ is the universal cover
%  and $\Diag_4^{+} \subset \SO_4$ is the subgroup of diagonal matrices.

It is proved in \cite{Goulart-Saldanha0,Saldanha-Shapiro} that
for every $z \in \Spin_{4}$ there exists $q \in \Quat_{4}$
such that $\cL_n(1;z)$ and $\cL_n(1;q)$
are homotopy equivalent
(the map from $z$ to $q$ is explicitly described).
We aim to study the homotopy type of $\cL_3(1;q)$, 
$q \in \Quat_{4}$, or equivalently that of
the disconnected space
\[ \cL_3 = \bigsqcup_{q \in \Quat_{4}} \cL_3(1;q). \]
For $z \in \Spin_4 = \Ss^3 \times \Ss^3$,
let $\Omega\Spin_4(1;z) \approx \Omega(\Ss^3\times\Ss^3)$
be the loop space of continuous paths $\Gamma: [0,1] \to \Spin_4$
with $\Gamma(0) = 1$, $\Gamma(1) = z$
so that $\cL_3(1;z) \subset \Omega\Spin_4(1;z)$.
Corollary 1.1 in \cite{Goulart-Saldanha-cw}
gives us a large part of the answer;
we restate it here, restricted to the case $n = 3$.

\begin{fact} % [Corollary 1.1 in \cite{Goulart-Saldanha-cw}]
\label{fact:coro11}
If $q \in \Quat_{4} \smallsetminus Z(\Quat_{4})$ then
the inclusion $\cL_3(1;q) \subset \Omega\Spin_{4}(1;q)$
is a homotopy equivalence.
\end{fact}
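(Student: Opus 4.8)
The plan is to prove that the inclusion $\cL_3(1;q)\subset\Omega\Spin_4(1;q)$ is a weak homotopy equivalence; since both spaces have the homotopy type of CW complexes (the first as an open condition inside a Banach manifold of curves, the second as the path space of an ANR), this is enough. Equivalently, I want every relative homotopy group $\pi_k(\Omega\Spin_4(1;q),\cL_3(1;q))$ to vanish: given a compact CW pair $(P,P_0)$ and a continuous family $F\colon P\to\Omega\Spin_4(1;q)$ with $F(P_0)\subset\cL_3(1;q)$, I would deform $F$, rel $P_0$, into a family taking values in $\cL_3(1;q)$.

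The engine is the \emph{loop addition} technique of the earlier papers, resting on two complementary facts. First, inserting a sufficiently small positive locally convex loop into a locally convex curve keeps it locally convex but does not change its class in $\Omega\Spin_4(1;q)$, since the inserted loop can be shrunk to a point through arbitrary (not necessarily locally convex) paths. Second, any path in $\Spin_4$ is a $C^0$-limit of finite concatenations of short convex arcs, and grafting small spirals at the junctions promotes such a concatenation to a genuine locally convex curve in the unchanged $\Omega\Spin_4$-class. Carrying the second construction out fiberwise over $P$ gives a candidate deformation of $F$ into $\cL_3(1;q)$; the subtlety is to perform it \emph{rel $P_0$}, compatibly with the locally convex curves already prescribed there. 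For this one needs every such curve to have \emph{room}: a subarc along which a pair of loops can be inserted or removed within $\cL_3(1;q)$, depending continuously on the parameter.

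The decisive point, and the only place the hypothesis $q\notin Z(\Quat_4)$ is used, is that every $\Gamma\in\cL_3(1;q)$ has room when $q\in\Quat_4\smallsetminus Z(\Quat_4)$. The curves lacking room are the ``tight'' ones — convex arcs and their low-complexity relatives (short itinerary, small perturbations of multiply covered closed convex curves of $\PP^3$) — and these are precisely the strata responsible for the extra wedge summands in Theorem~\ref{theo:mainS3}. Here I would invoke the convexity classification of \cite{Saldanha-Shapiro,Goulart-Saldanha}: since there is no closed convex curve in $\Ss^3$ (Remark~\ref{remark:convex}), a tight curve with endpoint in the discrete set $\Pi[\Quat_4]$ is, up to perturbation, the lift of a $k$-fold cover of a closed convex curve of $\PP^3$, and a Frenet-frame computation pins its endpoint in $\Spin_4$ to $\{1,-1,\hat a\hat c,-\hat a\hat c\}=Z(\Quat_4)$ (using $(\hat a\hat c)^2=1$ and the corresponding bookkeeping for the convex arcs underlying $\cL_3(I;-I)$). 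Hence for $q\notin Z(\Quat_4)$ no curve of $\cL_3(1;q)$ is tight, so each one has a subarc with the loop-insertion room required above — uniformly over compact families, by compactness of $P$ together with the pointwise positivity of $\kappa_1,\kappa_2,\kappa_3$ in~\eqref{equation:logdiff}. With this input the compression of the previous paragraph goes through rel $P_0$, so all the relative homotopy groups vanish and the inclusion is a homotopy equivalence.

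I expect the main obstacle to be the \emph{family} version of the surgery: inserting and removing pairs of loops continuously over the compact parameter space $P$, compatibly with the prescribed curves over $P_0$ (so that the varying ``number of inserted loops'' can be reconciled globally) and within the chosen differentiability class $C^r$. This needs (i) a uniform positive lower bound on the room available along the curves of the family, obtained by combining compactness of $P$ with the curvature positivity, and (ii) an explicit, parameter-continuous local model for the grafting that interpolates between ``small loop present'' and ``small loop absent'' without ever losing local convexity. A secondary difficulty is the precise endpoint classification of tight curves used above, which rests on the Bruhat-order description of Frenet frames of convex curves developed in \cite{Saldanha-Shapiro,Goulart-Saldanha}.
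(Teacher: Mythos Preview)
The paper does not prove this statement; it is quoted as Corollary~1.1 of \cite{Goulart-Saldanha-cw}. From the surrounding text one sees how the argument there runs. First one proves Fact~\ref{fact:tildeY}: for \emph{every} $q\in\Quat_4$ the inclusion $(\tilde\cY_3\cap\cL_3(1;q))\subset\Omega\Spin_4(1;q)$ is a weak homotopy equivalence. Then one makes the purely combinatorial observation, stated just before Fact~\ref{fact:tildeY}, that $q\in\Quat_4\smallsetminus Z(\Quat_4)$ forces $\cL_3(1;q)\subset\tilde\cY_3$. The reason is that every $\sigma\in M=\{aba,bacb,bcb,cba,\eta\}$ satisfies $\hat\sigma\in Z(\Quat_4)$ (compare the first line of the proof of Lemma~\ref{lemma:sqsubseteq}), so any curve with itinerary in $M^\ast$ has endpoint in $Z(\Quat_4)$; contrapositively, if $q\notin Z(\Quat_4)$ every $\Gamma\in\cL_3(1;q)$ has at least one itinerary letter in $\tilde Y$. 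The corollary is then immediate from $\cL_3(1;q)=\tilde\cY_3\cap\cL_3(1;q)$.

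Your overall architecture --- compress maps of pairs into $\cL_3(1;q)$ by grafting small positive loops, using that the curves already in $\cL_3(1;q)$ have ``room'' --- is correct in spirit and is indeed how looseness is used in \cite{Goulart-Saldanha-cw}. The genuine gap is your identification of the curves \emph{without} room. You claim that a tight curve with endpoint in $\Quat_4$ is, up to perturbation, a lift of a $k$-fold cover of a closed convex curve of $\PP^3$, and then read off its endpoint as an element of $Z(\Quat_4)$. That picture is essentially correct for $n=2$ but fails for $n=3$: the obstruction set $\cM=\cL_3\smallsetminus\tilde\cY_3$ is far richer than multiply covered convex curves. A curve with itinerary $[aba][cba]$, say, lies in $\cM_{1,1}$ and is nothing like a repeated convex curve; Sections~\ref{section:bacb}--\ref{section:contractibility} of this paper are devoted precisely to the nontrivial structure of these strata. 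So your ``Frenet-frame computation'' does not apply to the curves it needs to cover, and the step ``tight $\Rightarrow$ endpoint in $Z(\Quat_4)$'' is unjustified as written.

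The repair is combinatorial rather than geometric: replace the multiply-covered-convex claim by the direct check, using \eqref{equation:faj}--\eqref{equation:acutegravehat}, that $\hat\sigma\in Z(\Quat_4)$ for each $\sigma\in M$, hence $\hat w\in Z(\Quat_4)$ for every $w\in M^\ast$. This gives $\cL_3(1;q)\subset\tilde\cY_3$ whenever $q\notin Z(\Quat_4)$. At that point you may either invoke Fact~\ref{fact:tildeY} outright, or --- closer to your sketch --- use that a letter in $\tilde Y$ in the itinerary is exactly the ``room'' your parametric loop-insertion needs (this is what Propositions~11.3 and~13.2 of \cite{Goulart-Saldanha-cw} supply; see Remark~\ref{remark:abcaccba} and the proof of Lemma~\ref{lemma:alpha1}).
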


The aim of this paper is to describe 
the homotopy type of $\cL_3(1;q)$
for the remaining cases $q \in Z(\Quat_4)$.
The following theorem is a slightly more informative
restatement of Theorem \ref{theo:mainS3}.

\begin{theo}
\label{theo:L3}
We have the following weak homotopy equivalences:
\begin{align*}
% \cL_3((+1,+1))
\cL_3(1;1) &\approx 
\Omega(\Ss^3 \times \Ss^3) \vee \Ss^4 \vee \Ss^8 \vee \Ss^8
\vee \Ss^{12} \vee \Ss^{12} \vee \Ss^{12} \vee \cdots, \\
% \cL_3((-1,-1))
\cL_3(1;-1) &\approx 
\Omega(\Ss^3 \times \Ss^3) \vee \Ss^2 \vee \Ss^6 \vee \Ss^6
\vee \Ss^{10} \vee \Ss^{10} \vee \Ss^{10} \vee \cdots, \\
% \cL_3((+1,-1))
\cL_3(1;-\hat a\hat c) &\approx 
\Omega(\Ss^3 \times \Ss^3) \vee \Ss^0 \vee \Ss^4 \vee \Ss^4
\vee \Ss^{8} \vee \Ss^{8} \vee \Ss^{8} \vee \cdots, \\
% \cL_3((-1,+1))
\cL_3(1;\hat a\hat c) &\approx 
\Omega(\Ss^3 \times \Ss^3) \vee \Ss^2 \vee \Ss^6 \vee \Ss^6
\vee \Ss^{10} \vee \Ss^{10} \vee \Ss^{10} \vee \cdots.
\end{align*}
The above bouquets include one copy of $\Ss^k$,
two copies of $\Ss^{(k+4)}$, \dots, $j+1$ copies of $\Ss^{(k+4j)}$, \dots,
and so on.
\end{theo}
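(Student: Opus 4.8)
The plan is to analyze $\cL_3(1;z)$ through the itinerary map, isolate a loop-space ``core'' $\cY_z$, and peel off the remaining homology as a wedge of spheres. For a generic $\Gamma\in\cL_3(1;z)$ the flag curve $t\mapsto\Pi(\Gamma(t))$ is transverse to the Bruhat stratification of $\Flag_4$ relative to the initial flag, and $\iti(\Gamma)$ is the finite word in $S_4\smallsetminus\{e\}$ recording, in order, the non-open Bruhat cells met. I would first prove that the resulting partition $\cL_3(1;z)=\bigsqcup_w \cL_3^w(1;z)$ is a CW-type decomposition: each stratum is homotopy equivalent to an open cell, its dimension is an explicit length statistic of the word $w$, the incidences are governed by a combinatorial refinement order on words, and the closure of any stratum is a union of lower strata. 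For $n=2$ this is in essence the analysis of \cite{Saldanha3}; for $n=3$ the alphabet has $23$ letters and the admissibility and factorization rules for itineraries are markedly more intricate, so this bookkeeping is the technical backbone of the argument.

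Next I would single out the subset $\cY=\cY_z\subset\cL_3(1;z)$, the union of the strata whose itinerary is \emph{reduced} (cannot be shortened by the elementary loop-cancellation moves), and prove that the composite $\cY_z\hookrightarrow\cL_3(1;z)\hookrightarrow\Omega\Spin_4(1;z)\approx\Omega(\Ss^3\times\Ss^3)$ is a weak homotopy equivalence: surjective on homotopy groups because any compact family in $\Omega\Spin_4(1;z)$ can be pushed into $\cL_3(1;z)$ by inserting cancelling pairs of short locally convex loops and then into $\cY_z$ by reducing itineraries, and injective because the reduced strata fit together precisely as the cells of a CW model of $\Omega(\Ss^3\times\Ss^3)$. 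This is the construction of the subset $\cY$ announced in the introduction, and the step that upgrades Fact~\ref{fact:coro11} (Corollary~1.1 of \cite{Goulart-Saldanha-cw}) from the non-central to the central endpoints; it is the one I expect to be the main obstacle, since it has no inexpensive $n=2$ analogue. In particular $\cY_z\hookrightarrow\cL_3(1;z)$ then admits a homotopy retraction $\rho$, namely $\cL_3(1;z)\hookrightarrow\Omega\Spin_4(1;z)\xrightarrow{\ \simeq\ }\cY_z$.

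By the first paragraph, the complement $\cL_3(1;z)\smallsetminus\cY_z$ of non-reduced strata contributes, up to homotopy, one cell $e_\sigma$ for each \emph{minimally} non-reduced itinerary $\sigma$, and two computations finish the picture. For dimensions: a minimally non-reduced word with $j$ ``full turns'' gives a cell of dimension $k+4j$, where the shift $k=k(z)\in\{0,2,4\}$ is read off from $z$ through the length and parity of the relevant opposite Schubert cell --- $k=4$ for $z=1$, $k=2$ for $z\in\{-1,\hat a\hat c\}$, $k=0$ for $z=-\hat a\hat c$ (where the $j=0$ cell is the isolated point formed by the space of convex curves) --- while the increment $4$ per turn is the fixed growth in length of such a word when a turn is inserted. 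For the count: there are exactly $j+1$ minimally non-reduced itineraries with $j$ turns; this is where $n=3$ departs sharply from $n=2$, in which there is a single such word for every $j$ and all multiplicities equal $1$, and one must enumerate, within the combinatorics of $S_4$-words, the inequivalent ways the turning distributes among the syllables and verify the count is precisely $j+1$. Feeding this into the cellular chain complex of the first paragraph yields $H_m(\cL_3(1;z))\cong H_m(\Omega(\Ss^3\times\Ss^3))$ when $m$ is not of the form $k+4j$ with $j\ge 0$, and $H_m(\cL_3(1;z))\cong H_m(\Omega(\Ss^3\times\Ss^3))\oplus\ZZ^{\,j+1}$ when $m=k+4j$.

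Finally, for each minimally non-reduced $\sigma$ I would build geometrically a based map $f_\sigma\colon \Ss^{k+4j_\sigma}\to\cL_3(1;z)$ by varying the appropriate $k+4j_\sigma$ parameters of a locally convex curve with itinerary $\sigma$ while keeping it locally convex with fixed endpoints, so that $f_\sigma$ represents the cellular generator dual to $e_\sigma$; together with the inclusion of $\cY_z$ these assemble into a map $\cY_z\vee\bigvee_\sigma \Ss^{k+4j_\sigma}\to\cL_3(1;z)$. By the second paragraph $H_*(\cY_z)\to H_*(\cL_3(1;z))$ is split injective, and by the third the classes $[f_\sigma]$ fill in exactly the remaining summands, so the assembled map is an isomorphism on integral homology and a bijection on $\pi_0$; since $\Omega(\Ss^3\times\Ss^3)$ is simply connected ($\pi_2(\Ss^3\times\Ss^3)=0$), every connected component of both sides is simply connected and the map is a weak homotopy equivalence. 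Reading off $k(z)$ for $z=1,-1,\hat a\hat c,-\hat a\hat c$ gives the four equivalences of Theorem~\ref{theo:L3}, and combining with Fact~\ref{fact:coro11} for $q\in\Quat_4\smallsetminus Z(\Quat_4)$ recovers Theorem~\ref{theo:mainS3}.
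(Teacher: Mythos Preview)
Your plan misidentifies where the real work lies. The weak equivalence $\tilde\cY_z \hookrightarrow \Omega\Spin_4(1;z)$ that you flag as ``the main obstacle'' is precisely Fact~\ref{fact:tildeY}, already established for \emph{all} $q\in\Quat_4$ (including the central ones) in \cite{Goulart-Saldanha-cw}; it is quoted here, not reproved. Moreover, the open set $\tilde\cY$ is not defined by any notion of ``reduced itinerary'' but concretely as the set of curves whose itinerary contains at least one letter outside the explicit set $M=\{aba,bacb,bcb,cba,\eta\}$ of Equation~\eqref{equation:MY}. Your ``loop-cancellation'' picture does not match this.

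The actual content of the paper --- and the part missing from your outline --- is the analysis of the complement $\cM=\cL_3\smallsetminus\tilde\cY$. It is \emph{not} a union of cells, one per minimally non-reduced word. Rather, one defines $\mu\colon M\to\NN^2$ and groups strata into pieces $\cM_{\mu_0,\mu_1}=\bigcup_{\mu(w)=(\mu_0,\mu_1)}\cL_3[w]$, and the substantial work (Sections~\ref{section:closed}--\ref{section:contractibility}) is to prove each $\cM_{\mu_0,\mu_1}$ is a closed, contractible, collared topological submanifold of codimension $2(\mu_0+\mu_1)$. This is delicate because, e.g., $\cM_{1,0}=\cL_3[[aba]]\cup\cL_3[[bacb]]\cup\cL_3[[bcb]]$ glues strata of different codimensions into a single codimension-$2$ submanifold, and similarly $\cM_{1,1}$ glues $\cL_3[\eta]$ (codimension~$5$) with two codimension-$4$ strata. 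Your ``one cell per word'' model cannot see this. Once Proposition~\ref{prop:M} is in hand, the spheres $\Ss^{2\mu_0+2\mu_1}$ are attached one per pair $(\mu_0,\mu_1)$ via Lemma~\ref{lemma:bouquetstep}, and the multiplicity $j+1$ arises simply as the number of pairs $(\mu_0,\mu_1)\in\NN^2$ with prescribed parities (determined by $z$) and $\mu_0+\mu_1$ fixed --- not from any enumeration of words.
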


Notice that $\cL_3(1;-\hat a\hat c)$ is disconnected,
with one connected component being contractible
(the subset of convex curves);
the other three spaces are connected.
In the notation of Theorem~\ref{theo:mainS3}
we have $\cL_{3}(I;I) = \cL_3(1;1) \sqcup \cL_3(1;-1)$
and $\cL_{3}(I;-I) = \cL_3(1;-\hat a\hat c) \sqcup \cL_3(1;\hat a\hat c)$.

% For the convenience of the reader,
%  Recall that, in the notation from \cite{Alves-Saldanha},
%  we have:
%  \begin{gather*}
%  \cL\Ss^3((+1,+1)) = \cL_3(1;1), \qquad
%  \cL\Ss^3((-1,-1)) = \cL_3(1;-1), \\
%  \cL\Ss^3((+1,-1)) = \cL_3(1;-\hat a\hat c), \qquad
%  \cL\Ss^3((-1,+1)) = \cL_3(1;\hat a\hat c).
%  \end{gather*}

\begin{remark}
\label{remark:sufficientlysmooth}
In the statements of Theorems~\ref{theo:mainS3} and \ref{theo:L3}
we were vague as to what the space of curves is.
For instance, do the curves have to be of class $C^4$,
or perhaps even more smoothness is required?
Actually, for any usual topology for which
the definition of locally convex curve 
in Equation \eqref{equation:logdiff} makes sense,
including $\Gamma \in H^r$ ($r \ge 1$)
and $\Gamma \in C^r$ ($r \ge 1$),
the homotopy type of $\cL_3$ is the same.
This is discussed at length in
\cite{Goulart-Saldanha1}, Subsection~2.2;
see also Lemma~1.11 in \cite{Saldanha-Zuhlke1}.
% and in Subsection~2.2 in \cite{Goulart-Saldanha1} (among others),

However, for the present paper, it is more convenient
to work in a Hilbert manifold of sufficiently smooth curves,
such as $\Gamma \in H^r$, $r \ge 5$.
Theorem 4 in \cite{Goulart-Saldanha1}, in particular,
requires curves to be sufficiently smooth as above:
the corresponding problem for lower smoothness
is the Shapiro-Shapiro Grassmannian conjecture
(which is still open,
but see \cite{Saldanha-Shapiro-Shapiro} and
\cite{Saldanha-Shapiro-Shapiro1}).
From now on we assume that curves are in such a space.
\end{remark}

\begin{remark}
\label{remark:AS}
In \cite{Alves-Saldanha} we show how to represent
a locally convex curve $\gamma: J \to \Ss^3$
by an associate pair of curves
$\gamma_{l}, \gamma_{r}: J \to \Ss^2$
(which are of course far easier to draw).
The main results in \cite{Alves-Saldanha} are corollaries 
of Theorems~\ref{theo:mainS3} and \ref{theo:L3} in the present paper.
For instance, Theorem~A in \cite{Alves-Saldanha} 
(in the notation of the present paper) 
states that
\[ \dim(H^2(\cL_3(1;\hat a\hat c);\RR)) \ge 3, \qquad
\dim(H^4(\cL_3(1;-\hat a\hat c);\RR)) \ge 4. \]
Recall that $\dim(H^2(\Omega(\Ss^3 \times \Ss^3))) = 2$ and
$\dim(H^4(\Omega(\Ss^3 \times \Ss^3))) = 3$.
It follows easily from Theorems~\ref{theo:mainS3} and \ref{theo:L3}
that
\[ \dim(H^2(\cL_3(1;\hat a\hat c);\RR)) = 3, \qquad
\dim(H^4(\cL_3(1;-\hat a\hat c);\RR)) = 5. \]
We shall not use the pair-of-curves representation in the present paper.
It would of course be very interesting to translate
our constructions into this representation;
such a translation process appears to be nontrivial.
\end{remark}

An important difference between the case $n = 2$
and higher dimensional cases, including $n = 3$,
is that in the case $n = 2$ the combinatorics
of the symmetric group $S_3$
is so trivial that it is handled by considering a short list of cases
(as in Figures 3, 4 and 5 in \cite{Saldanha3}).
Already for $S_4$ (corresponding to the case $n = 3$)
this approach becomes impractical.
For $n \ge 3$ we need therefore new algebraic and combinatorial theories.
% some of them developed in \cite{Goulart-Saldanha0, Goulart-Saldanha1,
% Goulart-Saldanha-cw} and applied to the case $n=3$ in the present.
In particular, we need to talk about \textit{itineraries} of curves
and about \textit{parity alternating} permutations.
We proceed to sketch the most important constructions and results.

In \cite{Goulart-Saldanha1} we present a stratification
of the space $\cL_3$ by itineraries.
Let $\Word_3$ be the set of finite words
in the alphabet $S_4 \smallsetminus \{e\}$,
where $S_4$ is the symmetric group
and $e \in S_4$ is the trivial permutation.
Each curve $\Gamma \in \cL_3$ has an itinerary $\iti(\Gamma) \in \Word_3$.
In order to describe the itinerary of $\Gamma$,
first consider the decomposition of $\Spin_4$ into 
(unsigned, disconnected) Bruhat cells.
More precisely,
in the notation of \cite{Goulart-Saldanha0}
we have the stratification
\[ \Spin_4 = \bigsqcup_{\rho \in S_4} \Bru_\rho. \]
For $z \in \Spin_4$, we have $z \in \Bru_\rho$
if and only if there exist upper triangular matrices $U_l, U_r$
such that
\[ \Pi(z) = U_l P_\rho U_r; \]
here $\Pi: \Spin_4 \to \SO_4$ is the usual covering map
and $P_\rho$ is the permutation matrix,
$e_j^\top P_\rho = e_{j^\rho}^\top$.
Thus, $\Bru_\eta$ is an open dense subset of $\Spin_4$
where $\eta = abacba$ is the top permutation;
$a = a_1 = (12)$, $b = a_2 = (23)$ and $c = a_3 = (34)$ are generators of $S_4$.
The \textit{singular set} of $\Gamma$ is 
\begin{equation}
\label{equation:singular}
\sing(\Gamma) = \{ t \in (0,1) \;|\; \Gamma(t) \notin \Bru_\eta \};
\end{equation}
this set is always finite and depends continuously on $\Gamma$
(with the Hausdorff metric; this is Theorem 1 in \cite{Goulart-Saldanha1}).
Given $\Gamma \in \cL_3$, let $\ell = \card(\sing(\Gamma))$
and $\sing(\Gamma) = \{ t_1 < \cdots < t_\ell \} \subset (0,1)$:
the itinerary of $\Gamma$ is the word
% $\sigma_1\cdots\sigma_\ell = $
$(\sigma_1,\ldots,\sigma_\ell) \in \Word_3$
where $\Gamma(t_j) \in \Bru_{\rho_j}$, $\rho_j = \eta\sigma_j$.
Itineraries allow us to define important open and closed
subsets of $\cL_3$.
Indeed, let
\begin{equation}
\label{equation:MY}
M = \{ aba, bacb, bcb, cba, abacba = \eta \}, \qquad
\tilde Y = S_4 \smallsetminus ( \{e\} \cup M);
\end{equation}
the tilde is there for notational consistency with \cite{Goulart-Saldanha-cw}.
The motivation for the choice of the set $M$ above is discussed
in Remarks~\ref{remark:whatisM} and \ref{remark:abc} 
(and in \cite{Goulart-Saldanha-cw}):
the set $M$ above is obtained from the subgroup
\begin{equation}
\label{equation:SPA}
S_{\PA} = \{ e, aba, bacb, bcb, ac, abc, cba, \eta = abacba \} < S_4 
\end{equation}
of parity alternating permutations by removing a few elements;
see Equation~\eqref{equation:parityalternating}
% and \cite{Goulart-Saldanha-cw}
for the general definition of $S_{\PA}$.

\goodbreak

Let $M^\ast \subset \Word_3$ be the set of words
using only letters from $M$
(this includes the empty word).
Let $\cM \subset \cL_3$ be
the closed subset of words with itineraries in $M^\ast$.
A large part of the present paper is devoted 
to understanding the subset $\cM \subset \cL_3$.
Its complement $\tilde\cY_3 = \cL_3 \smallsetminus \cM \subset \cL_3$
is the open set of curves 
whose itinerary admits at least one letter in $\tilde Y$.
Notice that if $q \in \Quat_4 \smallsetminus Z(\Quat_4)$
then $\cL_3(1;q) \subset \tilde\cY_3$.
We recall one of the main results from
\cite{Goulart-Saldanha-cw} (Theorem~5):

\begin{fact} % [Theorem 5 in \cite{Goulart-Saldanha-cw}]
\label{fact:tildeY}
For each $q \in \Quat_4$, the inclusion 
$(\tilde\cY_3 \cap \cL_3(1;q)) \subset \Omega\Spin_4(1;q)$
is a weak homotopy equivalence.
\end{fact}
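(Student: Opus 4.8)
The plan is to prove Fact~\ref{fact:tildeY} by the loop‑insertion method used for $\Ss^2$ in \cite{Saldanha3}, fed by the Bruhat decomposition of \cite{Goulart-Saldanha0} and the itinerary stratification of \cite{Goulart-Saldanha1}. Fix $q\in\Quat_4$ and write $\cX=\tilde\cY_3\cap\cL_3(1;q)$ and $\Omega=\Omega\Spin_4(1;q)$. Since $\cL_3$ is a Hilbert manifold (Remark~\ref{remark:sufficientlysmooth}) and $\tilde\cY_3$ is open in it, $\cX$ has the homotopy type of a CW complex, as does $\Omega$; so it suffices to show that the inclusion $\iota:\cX\hookrightarrow\Omega$ is a bijection on path components and an isomorphism on all higher homotopy groups. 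I would run the argument through an auxiliary ``super‑flexible'' subspace. Fix once and for all a long \emph{reservoir} arc $\Gamma_{\mathrm{res}}$: a locally convex curve whose itinerary is a long word all of whose letters lie in $\tilde Y$, arranged so that, re‑based as a loop, $\Gamma_{\mathrm{res}}$ is null‑homotopic in $\Omega\Spin_4(1;1)$. Let $\cU\subseteq\cL_3$ be the open set of curves containing a reparametrized copy of $\Gamma_{\mathrm{res}}$ as a sub‑arc, and put $\cU_q=\cU\cap\cL_3(1;q)$; since a copy of $\Gamma_{\mathrm{res}}$ forces several $\tilde Y$‑letters into the itinerary, $\cU_q\subseteq\cX$. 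The inclusion $\cU_q\hookrightarrow\Omega$ is the composite $\cU_q\hookrightarrow\cX\xrightarrow{\iota}\Omega$, so by the $2$‑out‑of‑$3$ property it is enough to prove that
\[ \cU_q\hookrightarrow\cX \qquad\text{and}\qquad \cU_q\hookrightarrow\Omega \]
are both weak homotopy equivalences.

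For $\cU_q\hookrightarrow\cX$: given a map $f:K\to\cX$ from a compact polyhedron, each $f(s)$ has a singular instant whose itinerary letter lies in $\tilde Y$, and continuity of $\sing(\cdot)$ (Theorem~1 of \cite{Goulart-Saldanha1}) lets one select such an instant continuously in $s$ locally. Near it the curvatures $\kappa_1,\kappa_2,\kappa_3$ of \eqref{equation:logdiff} are all positive with slack in the directions recorded by the $\tilde Y$‑letter, so one may homotope within $\cL_3$ by pumping up the $\kappa_j$ on a shrinking subinterval, giving birth to further $\tilde Y$‑singularities, until a full copy of $\Gamma_{\mathrm{res}}$ has been grown; openness of the itinerary strata and a partition of unity patch these local moves into a global homotopy $\cX\times[0,1]\to\cX$ ending in $\cU_q$. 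Damping the extra curvature back down shows the push is a homotopy inverse to the inclusion, so $\cU_q\hookrightarrow\cX$ is a weak equivalence (and, in passing, $\cX$ is connected, as required to match $\pi_0(\Omega)$).

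The crux is $\cU_q\hookrightarrow\Omega$. For surjectivity on $\pi_k$, start from $g:S^k\to\Omega$; after a small perturbation each $g(s)$ is piecewise smooth, and I would replace it by a locally convex $\tilde g(s)$ that begins with $\Gamma_{\mathrm{res}}$ and thereafter shadows $g(s)$, every non–locally‑convex move of $g(s)$ being rerouted through a small spiral ``paid for'' by the reservoir; because $\Gamma_{\mathrm{res}}$ is contractible in $\Omega\Spin_4(1;1)$ we get $\tilde g(s)\simeq g(s)$ in $\Omega$, so $[g]$ lies in the image. For injectivity, if $g:S^k\to\cU_q$ bounds $G:D^{k+1}\to\Omega$, enlarge the reservoir carried by $g$ (within $\cU_q$) until it is long enough to service all of $D^{k+1}$, and then push $G$ into $\cU_q$ stratum by stratum along the itinerary stratification of $\cL_3$, at each step repairing the failure of local convexity by depositing loops withdrawn from the reservoir — an operation that does not change $[G]$ because the reservoir is null in $\Omega$. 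This is precisely a parametric h‑principle for locally convex curves that holds \emph{once a reservoir is present}, which is what the unrestricted h‑principle lacks, and the reason $\cL_3(1;q)$ itself is in general not equivalent to $\Omega$.

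The main obstacle is exactly this last step for $n=3$. For $\Ss^2$ it is essentially the principal technical theorem of \cite{Saldanha3}; for $\Ss^3$ the Bruhat stratification is indexed by the $24$‑element group $S_4$ rather than by $S_3$, the split between the ``rigid'' letters $M$ and the ``flexible'' letters $\tilde Y$ of \eqref{equation:MY} has to be analysed case by case, and the loop‑depositing surgeries must be made consistent across incidences of strata of codimension $\ge 2$. Carrying out these surgeries — transferring curvature from the reservoir into a prescribed stratum while keeping every curve of the family locally convex, uniformly over the parameter disk — is the analytic and combinatorial heart of \cite{Goulart-Saldanha0,Goulart-Saldanha1,Goulart-Saldanha-cw}; granting those inputs, the two weak equivalences above combine to give $\tilde\cY_3\cap\cL_3(1;q)\approx\Omega\Spin_4(1;q)$ for every $q\in\Quat_4$.
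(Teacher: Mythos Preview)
This statement is labeled a \emph{Fact} in the paper and is not proved here: it is quoted verbatim as Theorem~5 of \cite{Goulart-Saldanha-cw}. The present paper offers no proof of it, only the citation, so there is no ``paper's own proof'' against which to compare your attempt.

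That said, your sketch is not really a proof either, and you concede as much in your final paragraph: the hard step --- the parametric surgery pushing a disk $G:D^{k+1}\to\Omega$ into $\cU_q$ stratum by stratum while maintaining local convexity --- is exactly what you defer to \cite{Goulart-Saldanha-cw}. So the proposal is circular in practice: you are invoking the content of the very paper whose theorem you are trying to reprove. Moreover, from what the present paper says about \cite{Goulart-Saldanha-cw} (see Section~\ref{section:review}), the actual proof there does not proceed by a reservoir/loop-insertion argument in the style of \cite{Saldanha3}; rather it builds the CW complex $\cD_3$ dual to the itinerary stratification, constructs an inclusion $i:\cD_3\to\cL_3$ that is a homotopy equivalence, and analyzes the subcomplex corresponding to the ideal $\Ideal_{Y_2}$ (and related ideals) to identify which cells can be collapsed. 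Your intuition that the $\tilde Y$-letters are the ``flexible'' ones is right, but the machinery that makes this precise in \cite{Goulart-Saldanha-cw} is combinatorial (via $\cD_3$ and the partial orders $\preceq$, $\sqsubseteq$) rather than the analytic reservoir picture you describe.
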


Define $\mu: M \to \NN^2$ 
(where $\NN = \{0,1,2,\ldots\}$)
% and $\NN^\ast = \NN \smallsetminus \{0\}$.)
by
\[ \mu(aba) = \mu(bacb) = \mu(bcb) = (1,0), \quad
\mu(cba) = (0,1), \quad \mu(abacba) = (1,1). \]
For $\Gamma \in \cM$ with $\iti(\Gamma) = \sigma_1\cdots\sigma_\ell$,
define $\mu(\Gamma) = \mu(\sigma_1) + \cdots + \mu(\sigma_\ell)$.
For $(\mu_0,\mu_1) \in \NN^2$,
let $\cM_{\mu_0,\mu_1} \subset \cM$ 
be the set of curves $\Gamma \in \cM$ with 
$\mu(\Gamma) = (\mu_0,\mu_1)$.
Notice that $\cM_{0,0} \subset \cL_3(1;-\hat a\hat c)$
is the contractible connected component of convex curves.
The following is one of the main results of the present paper.

\begin{prop}
\label{prop:M}
Consider $(\mu_0,\mu_1) \in \NN^2$ and
set $z_1 = (-1)^{(\mu_0+1)}(\hat a\hat c)^{(\mu_1+1)} \in Z(\Quat_4)$.
The subset $\cM_{\mu_0,\mu_1} \subset \cL_3$ is non empty, closed
and is a contractible topological submanifold
of codimension $2\mu_0 + 2\mu_1$ of $\cL_3(1;z_1)$.
There exists a continuous map
$h_{\mu_0,\mu_1}: \Ss^{2\mu_0+2\mu_1} \to \cL_3$
with the following properties:
\begin{enumerate}
\item{the map $h_{\mu_0,\mu_1}$ is homotopic to a constant
in $\Omega\Spin_4(1;z_1)$;}
\item{the image of $h_{\mu_0,\mu_1}$ is contained
in $\tilde\cY_3 \cup \cM_{\mu_0,\mu_1}$;}
\item{the map $h_{\mu_0,\mu_1}$ intersects $\cM_{\mu_0,\mu_1}$
precisely once, and that intersection is topologically transversal.}
\end{enumerate}
\end{prop}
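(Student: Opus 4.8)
The plan is to build $\cM_{\mu_0,\mu_1}$ from the ground up using the combinatorial structure of itineraries in $M^\ast$, and then to produce the sphere $h_{\mu_0,\mu_1}$ by a local surgery that adds exactly one singular instant and immediately resolves it. First I would show non-emptiness and identify the boundary condition: a curve with itinerary $\sigma_1\cdots\sigma_\ell\in M^\ast$ starts at $1$ and, by tracking the effect of each letter on the relevant lift in $\Spin_4$, ends at a point determined only by the multiset of letters, hence only by $\mu(\Gamma)$. This is where the formula $z_1=(-1)^{\mu_0+1}(\hat a\hat c)^{\mu_1+1}$ comes from: each letter of $\mu$-type $(1,0)$ contributes a factor accounting for one ``$a$-type'' half-turn, each of type $(0,1)$ a ``$c$-type'' half-turn, and $\eta=abacba$ contributes both; the extra shift by one in each exponent is the contribution of the two endpoint frames being the standard flag. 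I would exhibit one explicit curve (concatenating standard model arcs realizing each letter, in a fixed order, e.g. all $aba$'s then $cba$'s then $\eta$'s) to establish $\cM_{\mu_0,\mu_1}\neq\emptyset$ and to fix a basepoint.

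Next I would prove that $\cM_{\mu_0,\mu_1}$ is a contractible topological submanifold of codimension $2\mu_0+2\mu_1$ in $\cL_3(1;z_1)$. Closedness is immediate since $M^\ast$ is closed under the limit behavior of itineraries (Theorem~1 of \cite{Goulart-Saldanha1}: $\sing$ depends continuously on $\Gamma$, and letters can only ``disappear'' or ``merge'' within $M$, never leave $M$, along limits inside $\cM$). For the manifold structure I would stratify $\cM_{\mu_0,\mu_1}$ by the actual word $w\in M^\ast$ (not just $\mu(w)$): each stratum $\cM_w$ is an open submanifold, and the transversality-of-the-singular-condition results already in the literature (each letter in $M$ imposes $|\mu(\sigma)|$ independent transversal conditions, $1$ for $aba,bacb,bcb$, $1$ for $cba$, $2$ for $\eta$) give that $\cM_w$ has codimension $2\mu_0+2\mu_1$ uniformly across $w$ with $\mu(w)=(\mu_0,\mu_1)$. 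The local model near a stratum of smaller $\ell$ inside a stratum of larger $\ell$ is governed by the ``unfolding'' of a multiple singular instant into several simple ones, which happens within the same $\mu$-value; assembling these local models shows $\cM_{\mu_0,\mu_1}$ is a topological manifold of the stated codimension. Contractibility I would get by a ``push to the canonical word'' deformation: a flow on $\cL_3(1;z_1)$ that moves singular instants to the right in the prescribed order, merging letters that may be merged, terminating at the basepoint curve; showing this flow stays inside $\cM_{\mu_0,\mu_1}$ and is continuous is the technical heart of this step.

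For the map $h_{\mu_0,\mu_1}:\Ss^{2\mu_0+2\mu_1}\to\cL_3$, I would use the normal structure just obtained: pick the basepoint curve $\Gamma_0\in\cM_{\mu_0,\mu_1}$, take a small topological normal disk $D^{2\mu_0+2\mu_1}$ to $\cM_{\mu_0,\mu_1}$ in $\cL_3(1;z_1)$ through $\Gamma_0$, and observe that its boundary sphere lies in $\tilde\cY_3$ (curves obtained by a small generic perturbation acquire at least one extra singular letter, and a dimension/genericity count forces that letter to lie in $\tilde Y$ rather than $M$). Then $h_{\mu_0,\mu_1}$ is the map collapsing $\partial D$ after sending it into $\tilde\cY_3$ and filling the disk by $D$ itself; more precisely, one builds a sphere by gluing $D$ to a cone on $\partial D$ inside $\tilde\cY_3\cup\cM_{\mu_0,\mu_1}$, which gives property~(2). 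Property~(3) is by construction: $D$ meets $\cM_{\mu_0,\mu_1}$ only at $\Gamma_0$ and does so topologically transversally since $D$ is a normal disk. Property~(1), nullhomotopy in $\Omega\Spin_4(1;z_1)$, follows from Fact~\ref{fact:tildeY} applied to the cone part together with the fact that a disk is contractible: the whole sphere bounds in $\cL_3(1;z_1)\cup(\text{cone on }\partial D\text{ in }\tilde\cY_3)$, hence in $\Omega\Spin_4(1;z_1)$.

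The main obstacle I expect is the contractibility and manifold-structure step: controlling how strata $\cM_w$ fit together as words $w$ with the same $\mu$-value degenerate into one another, i.e. giving a genuine topological (not just stratified) local model near the deepest strata, and producing a continuous global contracting flow that never leaves $\cM_{\mu_0,\mu_1}$. This is exactly the place where ``some of the steps are much harder'' than in the $n=2$ case: for $n=2$ the analogue of $M$ has essentially one nontrivial letter and the word structure is a single chain, whereas here $M$ has five letters with a two-dimensional $\mu$ and a nontrivial merging combinatorics (e.g. $cba$ followed by $aba$ merging, or sub-blocks assembling into $\eta$), so the local normal models and the bookkeeping of the contracting flow require careful case analysis.
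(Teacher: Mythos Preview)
Your manifold-structure step contains a concrete error that the rest of the argument cannot absorb. You assert that each stratum $\cL_3[w]$ with $w\in M^\ast$, $\mu(w)=(\mu_0,\mu_1)$, has codimension $2\mu_0+2\mu_1$, with each letter contributing according to $|\mu(\sigma)|$. This is false: the codimension of $\cL_3[\sigma_1\cdots\sigma_\ell]$ is $\sum_k(\inv(\sigma_k)-1)$, giving $2$ for $[aba]$, $[bcb]$, $[cba]$ but $3$ for $[bacb]$ and $5$ for $[\eta]$. Thus $\cM_{1,0}=\cL_3[[aba]]\cup\cL_3[[bacb]]\cup\cL_3[[bcb]]$ is a union of strata of codimensions $2,3,2$, and $\cM_{1,1}$ contains $\cL_3[[\eta]]$ of codimension $5$ alongside strata of codimension $4$. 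The substantive work in the paper (Lemmas~\ref{lemma:cM10} and~\ref{lemma:eta}) is precisely to show that these non-equidimensional pieces glue to a topological manifold of the stated codimension: near a curve of itinerary $[bacb]$ one exhibits an explicit topological submersion to $\RR^2$, and near $[\eta]$ a $C^1$ submersion to $\RR^4$ built via an implicit-function argument in $\Lo_4^1$. Your sketch has no mechanism for this. Relatedly, your closedness claim (``letters can only merge within $M$'') is not immediate: it is exactly the content of Lemmas~\ref{lemma:sqsubseteq} and~\ref{lemma:sqsubseteq2}, a case check on $\sqsubseteq$. Your contractibility flow is plausible but, as you say, is the hard part; the paper avoids constructing a global flow and instead uses an inductive Seifert--Van Kampen/Mayer--Vietoris argument (Lemma~\ref{lemma:3contractible}) organized by a finite-dimensional polytope model $T_{\mu_0,\mu_1}$ cut by hyperplanes.

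Your construction of $h_{\mu_0,\mu_1}$, by contrast, is a legitimate alternative to the paper's once the manifold structure is in hand. The paper works in the dual CW complex $\cD_3$: it takes products of the $2$-cells $c_{[aba]}$ and $c_{[abc]}\cup c_{[ac]}\cup c_{[cba]}$ so that boundaries land in $\cD_3[\Ideal_{Y_2}]$, then uses the $aaaa$-looseness machinery of \cite{Goulart-Saldanha-cw} to build a sphere and establish nullhomotopy in $\Omega\Spin_4$ (Lemmas~\ref{lemma:alpha0}--\ref{lemma:alphadois}). Your normal-disk-plus-cone route works too: at a basepoint of itinerary $[aba]^{\mu_0}[cba]^{\mu_1}$ the boundary of a small normal disk does land in $\tilde\cY_3$; since $\partial D$ bounds $D$ in $\cL_3\subset\Omega\Spin_4$ it is null there, hence by Fact~\ref{fact:tildeY} bounds some cone $C$ in $\tilde\cY_3$, and adjusting $C$ by an element of $\pi_{2\mu_0+2\mu_1}(\tilde\cY_3)\cong\pi_{2\mu_0+2\mu_1}(\Omega\Spin_4)$ forces $[D\cup C]=0$. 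This is shorter but non-constructive; the paper's route yields an explicit map via $i:\cD_3\to\cL_3$.
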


Here and elsewhere, the topological submanifolds we meet are rather tame:
they may have a few edges and corners, but nothing more complicated than that.
The reader should compare this proposition
with Lemma~7.3 in \cite{Saldanha3}.
In particular, the functions $h_{\mu_0,\mu_1}$
above correspond to the functions $h_\ast$ in that case.

\bigbreak

\section{Review of previous results}
\label{section:review}

% This paper relies heavily on previous work by the authors,
% particularly \cite{Saldanha3, Alves-Saldanha,
% Goulart-Saldanha, Goulart-Saldanha0,
% Goulart-Saldanha1, Goulart-Saldanha-cw}.
The general structure of the proof of our Theorem~\ref{theo:mainS3} follows
roughly the same line as the proof of Theorem~1 in \cite{Saldanha3}
(restated above as Fact~\ref{fact:mainS2}).
The hardest result there is the case $n=2$ of Fact~\ref{fact:tildeY};
the case $n=3$ (i.e., for curves in $\Ss^3$, as in the present paper)
is proved in \cite{Goulart-Saldanha-cw}.
Other parts of the argument, which are easy in the case $n=2$,
are much harder in the case $n=3$, and take up most of the present paper.
The cases $n\ge 4$ (which we do not discuss here) appear to be even harder.
In this section we review necessary facts and notation,
starting with more classical material
and following with our own results.

The symmetric group $S_{n+1}$ is always taken
with generators $a_i = (i,i+1)$ (a~transposition),
$1 \le i \le n$.
For $n = 3$, we write $a = a_1$, $b = a_2$ and $c = a_3$.
A \textit{reduced word} for a permutation
$\sigma \in S_{n+1}$ is a word of minimal length.
This minimal length equals $\inv(\sigma)$,
the number of inversions of the permutation $\sigma$.
More precisely, the set of inversions of $\sigma \in S_{n+1}$ is
\begin{equation}
\label{equation:inv}
\Inv(\sigma) = \{ (j_0,j_1) \;|\; 1 \le j_0 < j_1 \le n+1,
j_0^\sigma > j_1^\sigma \}
\end{equation}
and $\inv(\sigma) = \card(\Inv(\sigma))$.
If $\inv(\sigma_1) = \ell_1 = 1+\inv(\sigma_0)$,
we write $\sigma_0 \vartriangleleft \sigma_1$
if there exist reduced words
\begin{equation}
\label{equation:triangle}
\sigma_0 = a_{i_1} \cdots a_{i_{k-1}} a_{i_{k+1}} \cdots a_{i_{\ell_1}},
\qquad
\sigma_1 = a_{i_1} \cdots a_{i_{k-1}} a_{i_k} a_{i_{k+1}}
\cdots a_{i_{\ell_1}}. 
\end{equation}
The \textit{strong Bruhat order} 
\cite{Bjorner-Brenti} is the transitive closure
of $\vartriangleleft$: 
if $k \ge 0$ and $\inv(\sigma_k) = k+\inv(\sigma_0)$
we have $\sigma_0 \le \sigma_k$ if and only if there exist
$\sigma_1, \ldots, \sigma_{k-1}$ with
\[ \sigma_0 \vartriangleleft \sigma_1 \vartriangleleft \cdots
\vartriangleleft \sigma_{k-1} \vartriangleleft \sigma_k. \]
The top permutation under this order is $\eta \in S_{n+1}$,
$k^\eta = n+2-k$, $\inv(\eta) = n(n+1)/2$.
For $n = 3$, we have $\eta = abacba = [4321] = (14)(23)$
and $\inv(\eta) = 6$.

Let $\Up_{n+1} \subset \GL_{n+1}$ be the group
of invertible upper triangular matrices.
Two matrices $A_0, A_1 \in \GL_{n+1}$ are \textit{Bruhat equivalent}
if and only if there exist $U_0, U_1 \in \Up_{n+1}$
for which $A_1 = U_0 A_0 U_1$.
The equivalence classes are called \textit{Bruhat} or \textit{Schubert cells}.
Permutation matrices $P_\sigma$,
$e_j^\top P_\sigma = e_{j^\sigma}^\top$,
give representatives for such classes:
for $\sigma \in S_{n+1}$, the equivalent class of $P_\sigma$
is $\Bru_\sigma \subset \GL_{n+1}$.
We have
\[ \Bru_{\sigma_0} \subseteq \overline{\Bru_{\sigma_k}} \quad\iff\quad
\sigma_0 \le \sigma_k \]
(in the strong Bruhat order)
\cite{Bjorner-Brenti, Chevalley, Verma}.
A minor modification \cite{Goulart-Saldanha0}
of this equivalence relation
is to consider either
$\GL_{n+1}^{+} \subset \GL_{n+1}$ (positive determinant)
or, more frequently for us, $\SO_{n+1} \subset \GL_{n+1}$
and the subgroup
$\Up_{n+1}^{+} \subset \Up_{n+1}$ 
of triangular matrices with positive entries along the diagonal.
The corresponding equivalence relation
in either group is defined as follows:
two matrices $Q_0, Q_1$
are (signed) Bruhat equivalent
if and only if there exist $U_0, U_1 \in \Up_{n+1}^{+}$
for which $Q_1 = U_0 Q_0 U_1$.
We also call the equivalent classes \textit{Bruhat cells};
the group $\B_{n+1}^{+}$ of signed permutation matrices 
with positive determinant provides us with a set of representatives.
Thus, for $Q \in \B_{n+1}^{+}$,
$\Bru_Q \subset \SO_{n+1}$ is the equivalence class of $Q$.
We have $\Bru_Q \subset \Bru_\sigma$
where the permutation $\sigma \in S_{n+1}$ satisfies
$e_i^\top Q = \pm e_{i^\sigma}^\top$,
(thus defining a surjective homomorphism $\Pi: \B_{n+1}^{+} \to S_{n+1}$,
$\Pi(Q) = \sigma$).
More precisely, we have
\[ \Bru_\sigma =
\bigsqcup_{Q \in \Pi^{-1}[\{\sigma\}] \subset \B_{n+1}^{+}}
\Bru_Q, \]
for the same $\Pi$.
Such Bruhat cells $\Bru_Q$ are contractible
and the dimension of $\Bru_Q \subset \SO_{n+1}$ is $\inv(\sigma)$.

The Lie group we most frequently work with is $\Spin_{n+1}$,
the double cover of $\SO_{n+1}$ under (another)
surjective homomorphism $\Pi: \Spin_{n+1} \to \SO_{n+1}$.
The finite group
$\tilde \B_{n+1}^{+} = \Pi^{-1}[\B_{n+1}^{+}] \subset \Spin_{n+1}$
has generators $\acute a_i$, $1 \le i \le n$,
defined in Equation~\eqref{equation:acutegravehat} above
and Equation~(2) in \cite{Goulart-Saldanha0}.
The subgroup $\Quat_{n+1} < \tilde \B_{n+1}^{+}$
has generators $\hat a_i$, $1 \le i \le n$.
Given $\sigma = a_{i_1} \cdots a_{i_\ell}$
(a reduced word for a permutation $\sigma \in S_{n+1}$),
define
\[ \acute\sigma = \acute a_{i_1} \cdots \acute a_{i_\ell}, \quad
\grave\sigma = \grave a_{i_1} \cdots \grave a_{i_\ell}
\in \tilde \B_{n+1}^{+}: \]
these turn out to be independent of the choice of reduced word.
% The group
% $\tilde \B_{n+1}^{+}$
% is a convenient set of representatives.
Define the Bruhat cell $\Bru_z \subset \Spin_{n+1}$, 
$z \in \tilde \B_{n+1}^{+}$,
as the connected component of $\Pi^{-1}[\Bru_Q]$ containing $z$,
where $Q = \Pi(z) \in \B_{n+1}^{+}$ and $\Bru_Q \subset \SO_{n+1}$.
We thus have the decomposition
\[ \Spin_{n+1} = \bigsqcup_{z \in \tilde\B_{n+1}^{+}} \Bru_z. \]
The subsets $\Bru_z \subset  \Spin_{n+1}$ ($z \in \tilde\B_{n+1}^{+}$)
are contractible submanifolds and
the open cells in the above decomposition are
$\Bru_{q\acute\eta} \subset \Spin_{n+1}$, $q \in \Quat_{n+1}$.

We have $\grave\eta = (\acute\eta)^{-1} \in \tilde\B_{n+1}^{+}$;
let $\cU_1 = \grave\eta \Bru_{\acute\eta} \subset \Spin_{n+1}$,
a contractible open neighborhood of $1 \in \Spin_{n+1}$.
More generally, for $z \in \Spin_{n+1}$,
let $\cU_z = z\,\cU_1 \subset \Spin_{n+1}$,
an open neighborhood of $z \in \Spin_{n+1}$;
if $z \in \tilde\B_{n+1}^{+}$ we have $\Bru_z \subseteq \cU_z$.
Let $\Lo_{n+1}^{1}$ be the nilpotent group of
lower triangular matrices with diagonal entries equal to $1$.
The usual $LU$ and $QR$ decompositions define homeomorphisms
$\bQ: \Lo_{n+1}^{1} \to \cU_1 \subset \Spin_{n+1}$ and
$\bL = \bQ^{-1}: \cU_1 \to \Lo_{n+1}^{1}$.
Intersections $\cU_1 \cap \Bru_z$,
or, more generally, $\cU_{z_0} \cap \Bru_{z_1}$,
$z, z_0, z_1 \in \tilde\B_{n+1}^{+}$,
have been extensively studied
\cite{Shapiro-Shapiro-Vainshtein1, Alves-Saldanha2};
we shall have more to say about this in Section \ref{section:positivity}.

Let $J$ be an open interval, $t_0 \in J$ and
$\Gamma: J \to \Spin_{n+1}$ a locally convex curve.
Assume that $\Gamma(t_0) \in \Bru_{z_0}$
where ${z_0} \in \tilde\B_{n+1}^{+}$ and
$\Bru_{z_0} \subset \Spin_{n+1}$
is a cell of positive codimension (i.e., not open).
Then there exist
$z_{-}, z_{+} \in \acute\eta\Quat_{n+1} \subset \tilde\B_{n+1}^{+}$
which are functions of $z_0$ only (not of $\Gamma$)
and $\epsilon > 0$ such that
$t \in (t_0 - \epsilon,t_0)$ implies $\Gamma(t) \in \Bru_{z_{-}}$
and
$t \in (t_0, t_0 + \epsilon)$ implies $\Gamma(t) \in \Bru_{z_{+}}$.
We write
\begin{equation}
\label{equation:advchop}
z_{-} = \chop(z_0), \quad z_{+} = \adv(z_0), \qquad
z_{-}, z_{+} \in \acute\eta\Quat_{n+1} \subset \tilde\B_{n+1}^{+}; 
\end{equation}
the claim above is Theorem~3 in \cite{Goulart-Saldanha0}.
A purely algebraic definition of the maps $\chop$ and $\adv$
is given in Equation~(3) in \cite{Goulart-Saldanha0};
it implies $\adv(q\acute\rho) = q\acute\eta$
(for all $q \in \Quat_{n+1}$ and $\rho \in S_{n+1}$).

Let $\Word_n$ be the set of finite words in $S_{n+1} \smallsetminus \{e\}$.
For $q \in \Quat_{n+1}$,
let $\cL_n(1;q)$ be the set of locally convex curves
$\Gamma: [0,1] \to \Spin_{n+1}$ with 
$\Gamma(0) = 1$ and $\Gamma(1) = q$.
Let 
\[ \cL_n = \bigsqcup_{q \in \Quat_{n+1}} \cL_n(1;q). \]
Recall from Equation~\eqref{equation:singular}
that $\sing(\Gamma) \subset (0,1)$ is the set of
$t \in (0,1)$ for which $\Gamma(t) \notin \Bru_\eta$.
The set $\sing(\Gamma)$ is the set of
\textit{moments of non transversality}, i.e.,
the set of moments $t$ when the osculating flag
(defined by the Frenet frame $\Gamma(t)$)
is non transversal to the initial flag
(which is defined by the canonical base).
If $\sing(\Gamma) = \{t_1 < \cdots < t_\ell \}$,
the \textit{itinerary} of $\Gamma$ is the word
$\iti(\Gamma) = (\sigma_1,\ldots,\sigma_\ell) \in \Word_n$
where $\Gamma(t_i) \in \Bru_{\eta\sigma_i}$.
The set of words with itinerary
$w = (\sigma_1, \ldots, \sigma_\ell) \in \Word_n$
is $\cL_n[w] \subset \cL_n$, a contractible Hilbert manifold
of codimension 
\begin{equation}
\label{equation:dimw}
\dim(w) = \sum_{1 \le k \le \ell} (\inv(\sigma_k) - 1) 
\end{equation}
(this sums up results of \cite{Goulart-Saldanha1}).
The stratification
\[ \cL_n = \bigsqcup_{w \in \Word_n} \cL_n[w] \]
admits a kind of dual, a CW complex $\cD_n$
with one cell of dimension $\dim(w)$ for each word $w \in \Word_n$.
In \cite{Goulart-Saldanha-cw} we construct an inclusion
$i: \cD_n \to \cL_n$ which is also a homotopy equivalence;
for $w \in \Word_n$, the cell $c_w$ in $\cD_n$
intersects $\cL_n[w]$ (under the map~$i$) exactly once
and in a transversal manner
(this sums up results of \cite{Goulart-Saldanha-cw}).

The glueing map for the CW complex $\cD_n$
is unfortunately not as well understood as would be desired.
We do, however, define in Section~4 of \cite{Goulart-Saldanha-cw}
partial orders $\preceq$ and $\sqsubseteq$
in $\Word_n$ with the property that the image of $c_w$
(under the map~$i$) is contained in the union of $\cL_n[\tilde w]$
for $\tilde w \preceq w$ and $\tilde w \sqsubseteq w$.
The partial order $\sqsubseteq$ is easy to compute
and $\preceq$ can also be completely computed for the case $n = 3$,
the case of interest in the present paper.
For the convenience of the reader,
we define here the partial order $\sqsubseteq$.
Let $w \in \Word_n$ and $\sigma \in S_{n+1}$ we define
\begin{equation}
\label{equation:sqsubseteq}
(w \sqsubseteq \sigma) \quad\iff\quad
((w \ne ()) \land (\mult(w) \le \mult(\sigma)) \land (\hat w = \hat\sigma)). 
\end{equation}
Here $()$ denotes the empty word,
\[ \mult(\sigma) = (1^\sigma - 1, 1^\sigma + 2^\sigma - (1 + 2), \ldots)
\in \NN^n, \qquad
\hat\sigma = \acute\sigma(\grave\sigma)^{-1} \in \Quat_{n+1}. \]
The partial order in $\NN^n$ is defined coordinate by coordinate:
$(x_1, \ldots, x_n) \le (y_1, \ldots, y_n)$ if and only if
$x_i \le y_i$ for all $i$.
For $w = (\sigma_1,\ldots,\sigma_\ell)$,
we have $\mult(w) = \sum_i \mult(\sigma_i)$
and $\hat w = \hat\sigma_1 \ldots \hat\sigma_\ell$.
For words $w_0$ and $w_1 = (\sigma_1, \ldots, \sigma_\ell) \in \Word_n$,
we have $w_0 \sqsubseteq w_1$ if and only if there exists non empty words
$w_{01}, \ldots, w_{0\ell}$ such that
$w_0$ equals the concatenation $w_{01}\cdots w_{0\ell}$
and $w_{0j} \sqsubseteq \sigma_j$ for every $j$, $1 \le j \le \ell$.

%%%

A permutation $\sigma \in S_{n+1}$ is \textit{parity alternating}
if and only if
\begin{equation}
\label{equation:parityalternating}
\forall k, \quad 1 \le k \le n \quad \to \quad
\left( k^\sigma \not\equiv (k+1)^\sigma \pmod 2 \right). 
\end{equation}
The set $S_{\PA} \subseteq S_{n+1}$ of parity alternating permutations
is a subgroup; for $n = 3$, $S_{\PA}$
is given in Equation~\eqref{equation:SPA}.
A permutation $\sigma \in S_{n+1}$ is parity alternating
if and only if $\hat\sigma \in Z(\Quat_{n+1})$
(Lemma~2.2 in \cite{Goulart-Saldanha-cw}).

Theorem~3 in \cite{Goulart-Saldanha-cw} makes clear
the importance of this concept.
We state it here as Fact~\ref{fact:Y}.
Let $Y = S_{n+1} \smallsetminus S_{\PA}$,
the set of permutations which are \textit{not} parity alternating.
Let $\Ideal_Y \subset \Word_n$ be the set of words
in $S_{n+1} \smallsetminus \{e\}$
containing at least one letter in $Y$.
For $q \in \Quat_{n+1}$, let $\cY_n(1;q) \subset \cL_n(1;q)$
be the set of curves in this space with itinerary in $\Ideal_Y$.

\begin{fact}
\label{fact:Y}
For $q \in \Quat_{n+1} \smallsetminus Z(\Quat_{n+1})$,
we have $\cY_n(1;q) = \cL_n(1;q)$.
For $q \in Z(\Quat_{n+1})$, $\cY_n(1;q) \subset \cL_{n}(1;q)$
is an open subset, dense in the component of non-convex curves.
In all cases, the inclusion $\cY_n(1;q) \subset \Omega\Spin_{n+1}(1;q)$
is a homotopy equivalence.
\end{fact}

Fact~\ref{fact:tildeY} in the introduction is a minor adaptation
of the case $n=3$ of Fact~\ref{fact:Y},
hence the set $\tilde Y$ is considered a variation of the set $Y$.

\begin{remark}
\label{remark:whatisM}
The set $M = \{ aba, bacb, bcb, cba, \eta \}$
(as in Equation \eqref{equation:MY}) is a large subset of $S_{\PA}$:
$M = S_{\PA} \smallsetminus \{ e, ac, abc \}$.
The element $e$ does not appear in itineraries.
The removal of the pair $\{ac,abc\}$ is an operation similar to a collapse:
such operations appear in the proofs of
Theorems~3 and 5 in \cite{Goulart-Saldanha-cw}.
See also Remark~\ref{remark:abc}.
\end{remark}
%%%

\bigbreak

\section{Cells and Positivity}
\label{section:positivity}

We return to the study of intersections 
$\cU_{z_0} \cap \Bru_{z_1}$ for
$z_0, z_1 \in \tilde\B_{n+1}^{+}$.
Recall that $\bQ$ and $\bL$ define diffeomorphisms
between $\cU_{1} \subset \Spin_{n+1}$ and $\Lo_{n+1}^{1}$.
The maps $\bQ$ and $\bL$ respect the Bruhat stratification:
if $\Pi(z) = U_0 P_\rho U_1$,
for $\Pi: \Spin_{n+1} \to \SO_{n+1}$ and $U_0, U_1 \in \Up_{n+1}$,
then $\bL(z) = U_0 P_\rho \tilde U_1$, $\tilde U_1 \in \Up_{n+1}$
(and vice versa).
These maps therefore define an identification between
subsets of $\cU_1$ and of $\Lo_{n+1}^{1}$.
In particular, a smooth curve $\Gamma: J \to \Lo_{n+1}^1$
is \textit{convex} if and only if
$\bQ \circ \Gamma: J \to \cU_{1} \subset \Spin_{n+1}$
is (locally) convex.
Equivalently, $\Gamma: J \to \Lo_{n+1}^1$
is convex if and only if
the logarithmic derivative $(\Gamma(t))^{-1} \Gamma'(t)$
can be written in the form
\begin{equation}
\label{equation:convexLo}
(\Gamma(t))^{-1} \Gamma'(t) =
\sum_{1 \le j \le n} \beta_j(t) \fl_j, 
\qquad \beta_j(t) > 0; 
\end{equation}
here $\fl_j$ is the matrix whose only nonzero entry
is $(\fl_j)_{j+1,j} = 1$
(\cite{Goulart-Saldanha0}, Section~4).

We are particularly interested in
$\Pos_\eta \subset \Lo_{n+1}^{1}$,
the semigroup of totally positive matrices
\cite{Ando, Berenstein-Fomin-Zelevinsky, Goulart-Saldanha0},
a contractible connected component of
$\bQ^{-1}[\,\cU_1 \cap \Bru_{\acute\eta}]$.
We give two equivalent characterizations.
As in \cite{Berenstein-Fomin-Zelevinsky, Goulart-Saldanha0},
$\lambda_j: \RR \to \Lo_{n+1}^1$ is a homomorphism:
$\lambda_j(t) = \exp(t\fl_j)$, so that
the only nonzero subdiagonal entry is $(\lambda_j(t))_{j+1,j} = t$.
For $m = n(n+1)/2 = \inv(\eta)$, let 
$\eta = a_{i_1} \cdots a_{i_m}$ 
be a reduced word for $\eta$.
We have
\begin{equation}
\label{equation:positiveeta}
\Pos_{1,\eta} = \Pos_\eta = \{ \lambda_{i_1}(t_1) \cdots \lambda_{i_m}(t_m); \;
t_1, \ldots, t_m \in (0,+\infty) \subset \RR \}. 
\end{equation}
Equivalently,
for $L \in \Lo_{n+1}^{1}$ we have $L \in \Pos_\eta$
if and only if
there exists a smooth convex curve
$\Gamma: [0,1] \to \Lo_{n+1}^{1}$ % \cU_1 \subset \Spin_{n+1}$
satisfying 
\begin{equation}
\label{equation:positiveconvex}
\Gamma(0) = I, \quad \Gamma(1) = L.
\end{equation}
The boundary of $\Pos_\eta \subset \Lo_{n+1}^{1}$ is
\begin{equation}
\label{equation:positive}
\partial{\Pos_\eta} =
\bigsqcup_{\rho \in S_{n+1} \smallsetminus \{\eta\}} \Pos_\rho,
\qquad
\Pos_\rho =
\overline\Pos_\eta \cap \bQ^{-1}[\,\cU_1 \cap \Bru_{\acute\rho}]. 
\end{equation}
Equivalently, if $\rho = a_{i_1} \cdots a_{i_\ell}$ is a reduced word
(so that $\ell = \inv(\rho)$) then
\begin{equation}
\label{equation:positiverho}
\Pos_{1,\rho} =
\Pos_\rho = \{ \lambda_{i_1}(t_1) \cdots \lambda_{i_\ell}(t_\ell); \;
t_1, \ldots, t_\ell \in (0,+\infty) \subset \RR \}. 
\end{equation}
% The map
% \[ (0,+\infty)^\ell \to \Pos_\rho \subset \Lo_{n+1}^1, \qquad
% (t_1,\ldots,t_l) \mapsto \lambda_{i_1}(t_1) \cdots \lambda_{i_\ell}(t_\ell) \]
% is a smooth embedding.
% Furthermore,
If 
$\rho_0 = a_{i_1} \cdots a_{i_{k-1}} a_{i_{k+1}} \cdots a_{i_\ell}
\vartriangleleft
\rho_1 = a_{i_1} \cdots a_{i_{k-1}} a_{i_k} a_{i_{k+1}} \cdots a_{i_\ell}$
(where these are also reduced words)
then the map
\begin{equation}
\label{equation:boundarypos}
\begin{aligned}
(0,+\infty)^{k-1} \times [0,+\infty) \times (0,+\infty)^{\ell-k}
&\to \Pos_{\rho_0} \cup \Pos_{\rho_1} \subset \Lo_{n+1}^1, \\
(t_1,\ldots,t_l) &\mapsto \lambda_{i_1}(t_1) \cdots \lambda_{i_\ell}(t_\ell)
\end{aligned}
\end{equation}
is a homeomorphism and a smooth embedding in the interior.
Thus,
% near $L_0 \in \Pos_{\rho_0}$
the union $\Pos_{\rho_0} \cup \Pos_{\rho_1} \subset \Lo_{n+1}^1$
is a topological manifold with boundary
$\Pos_{\rho_0}$.

% We shall use special cases of this construction
% without explaning the general case.
For each $\rho \in S_{n+1}$,
set
$\Pos^{\Spin}_{1,\rho} = \bQ[\Pos_{1,\rho}] =
\bL^{-1}[\Pos_{1,\rho}] \subset \cU_1$.
Thus, $\Pos^{\Spin}_{1,\eta} = \bQ[\Pos_{1,\eta}] \subset \cU_1$
is an open subset and
a contractible connected component of $\cU_1 \cap \Bru_{\acute\eta}$.
The boundary of
$\Pos^{\Spin}_{1,\eta} \subset \cU_1$
is the disjoint union of the sets
$\Pos^{\Spin}_{1,\rho} = \bQ[\Pos_\rho] \subset \cU_1$,
$\rho \in S_n \smallsetminus \{\eta\}$.
(The boundary of
$\Pos^{\Spin}_{1,\eta} \subset \cU_1$
should not be confused with the larger boundary of
$\Pos^{\Spin}_{1,\eta} \subset \Spin_{n+1}$.)
Each subset $\Pos^{\Spin}_{1,\rho} \subset \cU_1$
is a submanifold of dimension $\inv(\rho)$ and
a contractible subset of $\cU_1 \cap \Bru_{\acute\rho}$.
We only have $\Pos^{\Spin}_{1,\rho} = \cU_1 \cap \Bru_{\acute\rho}$
in a few low-dimensional cases.

Consider $z_0 = q_0 \acute\rho_0 \in  \tilde\B_{n+1}^{+}$,
$\rho_0 \in S_{n+1}$ and $q_0 \in \Quat_{n+1}$.
% We want to define subsets
% $\Pos_{z_0,\rho} \subset \cU_{z_0}$
% generalizing the construction above,
% for which $\rho_0 = e$ and $z_0 = q_0 = \acute\rho_0 = 1$:
% this will require some auxiliary constructions.
Recall that $\Bru_{z_0} \subseteq \cU_{z_0}$.
For $\Pi: \Spin_{n+1} \to \SO_{n+1}$, set $Q_0 = \Pi(z_0) \in \SO_{n+1}$.
Let $Q_0 \Lo_{n+1}^1 \subset \GL_{n+1}$,
a left coset and an affine space of real matrices;
we also write $z_0 \Lo_{n+1}^1 = Q_0 \Lo_{n+1}^1$.
The diffeomorphism $\bQ_{z_0}: z_0 \Lo_{n+1}^1 \to \cU_{z_0}$
is defined as follows.
Given $M \in z_0 \Lo_{n+1}^1$ write $M = QR$,
$Q \in \SO_{n+1}$, $R \in \Up_{n+1}^{+}$
(that is, $R$ is upper triangular with positive diagonal).
There exists a unique $z \in \cU_{z_0}$ with $\Pi(z) = Q$:
define $\bQ_{z_0}(M) = z$.
Its inverse $\bL_{z_0}: \cU_{z_0} \to  z_0 \Lo_{n+1}^1$
can be defined by $\bL_{z_0}(z) = z_0 \bL(z_0^{-1} z)$.
The maps $\bQ_{z_0}: z_0 \Lo_{n+1}^1 \to \cU_{z_0}$ and 
its inverse $\bL_{z_0}$ also respect the Bruhat stratification.
A curve $\Gamma: J \to z_0 \Lo_{n+1}^1$
is convex if and only if
$z_0^{-1} \Gamma: J \to \Lo_{n+1}^1$ is convex
(as in Equation~\eqref{equation:convexLo}).

\begin{example}
\label{example:z0z1z2}
Take $n = 3$, 
$\sigma_0 = bacb$, $\sigma_1 = aba$ and $\sigma_2 = bcb$.
Define $\rho_i = \eta \sigma_i$
so that $\rho_0 = ac$, $\rho_1 = abc$ and $\rho_2 = cba$
so that $\rho_0 \vartriangleleft \rho_1$ and
$\rho_0 \vartriangleleft \rho_2$.
Take $q_0 = -1 \in \Quat_4$.
Consider
\[
z_0 = q_0 \acute\rho_0 = -\acute a\acute c \in \Spin_4, \qquad
Q_0 = \Pi(z_0) = \begin{pmatrix}
0 & -1 & 0 & 0 \\
1 & 0 & 0 & 0 \\
0 & 0 & 0 & -1 \\
0 & 0 & 1 & 0 \end{pmatrix} \in \Bru_{\rho_0} \subset \SO_4, \]
$z_1 = q_0 \acute\rho_1 = -\acute a\acute b\acute c$ and
$z_2 = q_0 \acute\rho_2 = -\acute c\acute b\acute a$.
% (these notations will be used throughout this section).
The coset $z_0 \Lo_4^1$ is the set of matrices
\begin{equation}
\label{equation:z0L}
z_0 \Lo_4^1 = Q_0 \Lo_4^1 =
\left \{ 
\begin{pmatrix}
y_1 & -1 & 0 & 0 \\
1 & 0 & 0 & 0 \\
x_1 & x_2 & y_2 & -1 \\
x_3 & x_4 & 1 & 0 \end{pmatrix}; \; x_1, x_2, x_3, x_4, y_1, y_2 \in \RR 
\right\}.
\end{equation}
The preimage
$\bQ_{z_0}^{-1}[\Bru_{z_0}]$
is the two dimensional affine space of matrices defined by
$(x_1=x_2=x_3=x_4=0)$ (with $y_1$ and $y_2$ free).
We shall also study the sets 
$\bQ_{z_0}^{-1}[\Bru_{z_1}],
\bQ_{z_0}^{-1}[\Bru_{z_2}] \subset z_0 \Lo^1_4$.
\end{example}
% The matrices
% \begin{equation}
% \label{equation:Posz1z2} \cU_1 \cap \Bru_{\acute\rho}$.

% \begin{pmatrix}
% y_1 & -1 & 0 & 0 \\
% 1 & 0 & 0 & 0 \\
% 0 & x_2 & y_2 & -1 \\
% 0 & 0 & 1 & 0 \end{pmatrix}, \qquad 
% \begin{pmatrix}
% y_1 & -1 & 0 & 0 \\
% 1 & 0 & 0 & 0 \\
% x_3 y_2 & 0 & y_2 & -1 \\
% x_3 & 0 & 1 & 0 \end{pmatrix}, \qquad
% x_2, x_3 > 0,
% \end{equation}
% are taken by $\bQ_{z_0}$ to elements of
% $\cU_{z_0} \cap \Bru_{z_1}$ and $\cU_{z_0} \cap \Bru_{z_2}$, respectively.

For $t > 0$, let $D_{\ray, t} = \Diag(1,t,t^2,\ldots,t^n)$.
For $M = z_0 L \in z_0 \Lo_{n+1}^1$ let
$\ray(t,M) = z_0 D_{\ray, t} L D^{-1}_{\ray,t} \in z_0 \Lo_{n+1}^1$.
Thus, for $M$ as in Equation~\eqref{equation:z0L} we have
\begin{equation}
\label{equation:rays}
\ray(t,M) =
\begin{pmatrix}
t y_1 & -1 & 0 & 0 \\
1 & 0 & 0 & 0 \\
t^3 x_1 & t^2 x_2 & t y_2 & -1 \\
t^2 x_3 & t x_4 & 1 & 0 \end{pmatrix}. 
\end{equation}
Notice that $\ray(t,M)$ is Bruhat-equivalent to $M$
and that $\lim_{t\to 0} \ray(t,M) = z_0$.
For fixed $M$, the set of matrices $\ray(t,M)$, $t > 0$,
is the \textit{ray} through $M$.

Following \cite{Goulart-Saldanha0} (Equation (4)), define the subgroups
\[ \Up_\rho = \{ U \in \Up^1_{n+1} \;|\;
((i < j) \land (U_{ij} \ne 0)) \to (i,j) \in \Inv(\rho) \} \le \Up^1_{n+1} \]
and $\Lo_\rho = (\Up_\rho)^\top \le \Lo^1_{n+1}$;
$\Inv(\rho)$ is the set of inversions of $\rho$
(as in Equation~\eqref{equation:inv}).
% so that $\inv(\rho) = |\Inv(\rho)|$.

\begin{remark}
\label{remark:LoLo}
If $z_0 = q_0 \acute\rho_0 \in  \tilde\B_{n+1}^{+}$ then
$\bQ_{z_0}^{-1}[\Bru_{z_0}] = z_0 \Lo_{\rho_0^{-1}} = \Up_{\rho_0} z_0$.
Also, if $\rho_0 = \eta\sigma_0$ then
$\Inv(\sigma_0^{-1}) \cap \Inv(\rho_0^{-1}) = \varnothing$,
$\Inv(\sigma_0^{-1}) \cup \Inv(\rho_0^{-1}) = \Inv(\eta)$ and
$z_0 \Lo_{\sigma_0^{-1}} = \Lo_{\sigma_0} z_0$.
\end{remark}

\begin{lemma}
\label{lemma:LoLo}
If $\rho = \eta\sigma \in S_{n+1}$ then
the map below is a diffeomorphism:
\[ \Lo_{\rho^{-1}} \times \Lo_{\sigma^{-1}} \to \Lo_{n+1}^1, \qquad
(L_\bfy,L_\bfx) \mapsto L_\bfy L_\bfx. \]
\end{lemma}

\begin{proof}
For this proof, order the entries of $L \in \Lo_{n+1}^1$
in increasing order of $|i-j|$:
the diagonal entries come first, next the entries of the form $(j+1,j)$
and so on.

Fill in the off-diagonal entries of $L_\bfy \in \Lo_{\rho^{-1}}$
and $L_\bfx \in \Lo_{\sigma^{-1}}$ with free variables
$y_{ij}, x_{ij} \in \RR$.
Expand the product $L_\bfy L_\bfx$
to obtain polynomials $P_{ij}$ in the variables $y_{i,\ast}$ and $x_{\ast,j}$.
For $i > j$, each $P_{ij}$ has a single monomial of degree $1$
(either $y_{ij}$ or $x_{ij}$)
and a number of monomials of degree two in earlier variables.
Thus, in order to invert the map it suffices to 
follow the above order in the entries.
\end{proof}

\begin{example}
\label{example:z0z1z2w}
As in Example~\ref{example:z0z1z2},
take $n = 3$, $\rho_0 = ac$
and $\sigma_0 = bacb$.
We have $\Inv(\rho_0^{-1}) = \{(1,2),(3,4)\}$,
$\Inv(\sigma_0^{-1}) = \{(1,3),(1,4),(2,3),(2,4)\}$,
so that $\Lo_{\rho_0^{-1}}$ and $\Lo_{\sigma_0^{-1}}$
are the sets of matrices of the forms
\[ L_\bfy = \begin{pmatrix}
1 & & & \\ -y_1 & 1 & & \\ & & 1 & \\ & & -y_2 & 1 
\end{pmatrix} \in \Lo_{\rho_0^{-1}}, \qquad
L_\bfx = \begin{pmatrix}
1 & & & \\ & 1 & & \\ x_3 & x_4 & 1 & \\ -\tilde x_1 & -\tilde x_2 & & 1 
\end{pmatrix} \in \Lo_{\sigma_0^{-1}} \]
where $\tilde x_1,\tilde x_2,x_3,x_4,y_1,y_2 \in \RR$.
If we take $x_1 = \tilde x_1 + x_3y_2$ and $x_2 = \tilde x_2 + x_4y_2$
we have that $M = Q_0 L_\bfy L_\bfx =
z_0 L_\bfy L_\bfx \in z_0 \Lo_{4}^1$ is as in
Example~\ref{example:z0z1z2}.
Comparing with the notation in the proof of Lemma~\ref{lemma:LoLo},
we use different indices and sometimes apply a minus sign
for compatibility with
Example~\ref{example:z0z1z2}.

From Remark~\ref{remark:LoLo},
$z_0 \Lo_{\rho_0^{-1}} = \Up_{\rho_0} z_0$.
From Lemma~\ref{lemma:LoLo} or from direct computation,
every matrix $M \in z_0 \Lo_{4}^1$ can be uniquely written
as $M = z_0 L_\bfy L_\bfx = U_\bfy z_0 L_\bfx$
for $L_\bfy \in \Lo_{\rho_0^{-1}}$, $U_\bfy \in \Up_{\rho_0}$
and $L_\bfx \in \Lo_{\sigma_0^{-1}}$.
It follows that $z_0 L_{\bfy,0} L_\bfx$ and $z_0 L_{\bfy,1} L_\bfx$
are Bruhat equivalent
(if $L_{\bfy,\ast} \in \Lo_{\rho_0^{-1}}$ 
and $L_\bfx \in \Lo_{\sigma_0^{-1}}$).
The section
$z_0 \Lo_{\sigma_0^{-1}} \subset z_0 \Lo_{4}^1$
already has the essential information
concerning Bruhat cells.

In the notation of our examples, $z_0 \Lo_{\sigma_0^{-1}}$ is the subset
of matrices of the form
\[ 
\tilde M =
\begin{pmatrix}
0 & -1 & 0 & 0 \\
1 & 0 & 0 & 0 \\
\tilde x_1 & \tilde x_2 & 0 & -1 \\
x_3 & x_4 & 1 & 0 \end{pmatrix}
\in z_0 \Lo_{\sigma_0^{-1}} \subset z_0 \Lo_4^1.
\]
Let $\rho_1, \rho_2 \in S_4$ and $z_1, z_2 \in \tilde\B^{+}_4$
be as in Example~\ref{example:z0z1z2}.
The preimages under $\bQ_{z_0}: z_0 \Lo_{n+1}^1 \to \cU_{z_0}$ of the subsets
$\Bru_{\rho_0}, \cU_{z_0} \cap \Bru_{\rho_1}, \cU_{z_0} \cap \Bru_{\rho_2}
\subset \, \cU_{z_0}$
are characterized by
$(\tilde x_1 = \tilde x_2 = x_3 = x_4 = 0)$,
$(\tilde x_1 = x_3 = x_4 = 0, \tilde x_2 \ne 0)$ and
$(\tilde x_1 = \tilde x_2 = x_4 = 0, x_3 \ne 0)$, respectively
(with $y_1$ and $y_2$ free in all cases).
Similarly, the preimages of
$\cU_{z_0} \cap \Bru_{z_1}$ and $\cU_{z_0} \cap \Bru_{z_2}$
are characterized by
$(\tilde x_1 = x_3 = x_4 = 0, \tilde x_2 > 0)$ and
$(\tilde x_1 = \tilde x_2 = x_4 = 0, x_3 > 0)$, respectively.
The intersection of these two preimages with $z_0 \Lo_{\sigma_0^{-1}}$
are two rays in $z_0 \Lo_{n+1}^1$.
\end{example}

Our next aim is to define the subsets 
\[ \Pos_{z_0,\rho}, \Neg_{z_0,\rho} \subseteq z_0 \Lo_{n+1}^1 \]
and their counterparts in $\cU_{z_0} \subset \Spin_{n+1}$,
\[
\Pos^{\Spin}_{z_0,\rho} = \bQ_{z_0}[\Pos_{z_0,\rho}],
\quad
\Neg^{\Spin}_{z_0,\rho} = \bQ_{z_0}[\Neg_{z_0,\rho}].
\]
We begin with special cases; the general case takes a little longer.
If $z_0 = q_0 \acute\rho_0$ (with $q_0 \in \Quat_{n+1}$)
and $\rho \not\ge \rho_0$ (in the strong Bruhat order)
then $\Pos_{z_0,\rho} = \Neg_{z_0,\rho} = \varnothing$;
this is consistent with the fact that
$\Bru_{\rho} \cap \,\cU_{z_0} = \varnothing$.
Set
$\Pos_{z_0,\rho_0} = \Neg_{z_0,\rho_0}
= z_0 \Lo_{\rho_0^{-1}} = \Up_{\rho_0} z_0 = \bQ_{z_0}^{-1}[\Bru_{z_0}]$
and
$\Pos^{\Spin}_{z_0,\rho_0} = \Neg^{\Spin}_{z_0,\rho_0} = \Bru_{z_0}$.

\begin{lemma}
\label{lemma:pos}
Consider $z_0 = q_0 \acute\rho_0 \in \tilde\B_{n+1}^{+}$.
% For $\sigma_0 = \eta\rho_0$, 
% consider a reduced word $\sigma_0^{-1} = a_{i_1}\cdots a_{i_\ell}$.
For $M \in  z_0 \Lo_{n+1}^1$, the following conditions are equivalent:
\begin{enumerate}
\item{There exists
$\tilde M \in z_0 \Lo_{\rho_0^{-1}}$
and $\tilde L \in \Pos_\eta \subset \Lo_{n+1}^1$
such that $M = \tilde M\tilde L$.}
% and $t_1, \ldots, t_\ell \in (0,+\infty) \subset \RR$ with
% \[ M  = \tilde M \lambda_{i_1}(t_1) \cdots \lambda_{i_\ell}(t_\ell). \]}
\item{There exists a convex curve $\Gamma: [0,1] \to z_0 \Lo_{n+1}^1$
with 
\[ \Gamma(0) \in z_0 \Lo_{\rho_0^{-1}}, \qquad \Gamma(1) = M.\]}
\item{There exists a convex curve $\Gamma: [0,1] \to \cU_{z_0}$
with 
\[ \Gamma(0) \in \Bru_{z_0}, \qquad \Gamma(1) = \bQ_{z_0}(M).\]}
\end{enumerate}
\end{lemma}

% We have $\rho_0\sigma_0^{-1} = \eta$ and
% $\inv(\rho_0) + \inv(\sigma_0^{-1}) = m = \inv(\eta)$.
% There exist reduced words
% \[ \rho_0 = a_{j_1}\cdots a_{j_{m-\ell}}, \qquad
% \eta = a_{j_1}\cdots a_{j_{m-\ell}}a_{i_1}\cdots a_{i_\ell}. \]

\begin{proof}
The equivalence between items (2) and (3) follows
by applying $\bQ_{z_0}$ or $\bL_{z_0}$
and from the definition of convexity for $\Gamma: [0,1] \to z_0 \Lo_{n+1}^1$.

Assume that item (2) holds and
consider a convex curve $\Gamma: [0,1] \to z_0 \Lo_{n+1}^1$
with $\tilde M = \Gamma(0) \in z_0 \Lo_{\rho_0^{-1}}$ and $M = \Gamma(1)$.
As in Equation \eqref{equation:positiveconvex},
we have $\tilde L = \tilde M^{-1} M \in \Pos_{\eta} \subset \Lo_{n+1}^1$,
proving item (1).

Conversely, assume that item (1) holds. There is a convex curve 
$\Gamma_0: [0,1] \to \Lo_{n+1}^1$ with
$\Gamma_0(0) = I$ and $\Gamma_0(1) = \tilde L$.
Take $\Gamma(t) = \tilde M \Gamma_0(t)$ to prove item (2).
\end{proof}

% We have $M_0^{-1} M \in \Pos_\eta$: write
% \[ M_0^{-1} M = 
% \lambda_{j_1}(t_1)\cdots \lambda_{j_{m-\ell}}(t_{m-\ell})
% \lambda_{i_1}(t_{m-\ell+1})\cdots\lambda_{i_{\ell}}(t_m), \quad
% t_1, \ldots, t_m \in (0,+\infty). \]
% Set
% $\tilde M = M_0 \lambda_{j_1}(t_1)\cdots \lambda_{j_{m-\ell}}(t_{m-\ell})$.
% [TO BE COMPLETED]

We are now ready to define $\Pos_{z_0,\eta}$:
a matrix
$M \in z_0 \Lo_{n+1}^{1}$ belongs to $\Pos_{z_0,\eta}$
if~and~only~if it satisfies
one of the conditions in Lemma~\ref{lemma:pos}.

Similarly, 
$M \in z_0 \Lo_{n+1}^{1}$ belongs to $\Neg_{z_0,\eta}$
if and only if
there exists a convex curve $\Gamma: [-1,0] \to z_0 \Lo_{n+1}^1$
with $\Gamma(-1) = M$ and $\Gamma(0) \in z_0 \Lo_{\rho_0^{-1}}$.
Set 
\[
\Pos^{\Spin}_{z_0,\eta} = \bQ_{z_0}[\Pos_{z_0,\eta}] \subseteq \cU_{z_0}, 
\qquad
\Neg^{\Spin}_{z_0,\eta} = \bQ_{z_0}[\Neg_{z_0,\eta}] \subseteq \cU_{z_0}. 
\]
We have $\adv(z_0) = q_0 \acute\eta$
(from the algebraic formula for $\adv$);
Lemma~\ref{lemma:pos}
(with the definition of $\adv$ in Equation~\eqref{equation:advchop})
implies that 
$\Pos^{\Spin}_{z_0,\eta} \subseteq \Bru_{\adv(z_0)}$.
% q_0\acute\eta}$;
We therefore have
\[ \Pos^{\Spin}_{z_0,\eta} \subseteq \cU_{z_0} \cap \Bru_{q_0\acute\eta}. \]
For very small values of $n$ this is sometimes an equality
(see Example~\ref{example:z0z1z2x}).
% but for large $n$ the inclusion is strict.

For $z = q\acute\rho$ with $q \in \Quat_{n+1}$ and $\rho \ge \rho_0$ define
\[
\BL_{z_0,z} =
\bQ_{z_0}^{-1}[\Bru_{z} \cap \,\cU_{z_0}] =
\bL_{z_0}[\Bru_{z} \cap \,\cU_{z_0}]
\subseteq  z_0 \Lo_{n+1}^{1}, 
\]
which is either empty or
a submanifold of dimension $\inv(\rho)$.
% here $\bQ_{z_0}: z_0 \Lo_{n+1}^{1} \to \cU_{z_0} \subset \Spin_{n+1}$.
For large $n$, most values of $z_0$ and $z$ with $\rho \ge \rho_0$,
the sets $\BL_{z_0,z}$ have many connected components
and these are possibly complicated.
We use and generalize the notation of \cite{Alves-Saldanha2},
where $\BL_{z}$ denotes our
$\BL_{1,z} \subseteq \Lo_{n+1}^{1}$.

We are finally ready to define $\Pos_{z_0,\rho}$ 
for the remaining cases $\rho \ge \rho_0$: set
\[
\begin{aligned}
\Pos_{z_0,\rho} &=
\overline{\Pos_{z_0,\eta}} \cap
\BL_{z_0,q_0 \acute\rho} 
\subseteq  z_0 \Lo_{n+1}^{1}, \\
\Pos^{\Spin}_{z_0,\rho} &=
\bL_{z_0}^{-1}[\Pos_{z_0,\rho}] =
\cU_{z_0} \cap \overline{\Pos^{\Spin}_{z_0,\eta}} \cap \Bru_{q_0 \acute\rho}
\subseteq \cU_{z_0}.
\end{aligned}
\]
Notice that
$\Pos_{z_0,\rho} =
\bQ_{z_0}^{-1}[\Pos^{\Spin}_{z_0,\rho}]
\subseteq  z_0 \Lo_{n+1}^{1}$;
the following proposition lists a few basic facts about these sets.

\begin{prop}
\label{prop:Posz0}
Let $z_0 = q_0 \acute\rho_0$, 
$\rho_0 \in S_{n+1}$, $\rho_0 \ne \eta$ and $q_0 \in \Quat_{n+1}$.
We have
\[ \partial\Pos_{z_0,\eta} =
\bigsqcup_{\rho_0 \le \rho \ne \eta} \Pos_{z_0,\rho}
\subseteq z_0 \Lo_{n+1}^1. \]
For $\rho \ge \rho_0$ the subset
$\Pos_{z_0,\rho} \subseteq z_0 \Lo_{n+1}^1$
is nonempty, connected and a submanifold of dimension $\inv(\rho)$.
% and contractible.
% connected component of $\Bru^{z_0}_{q_0 \acute\rho}$.
\end{prop}

\begin{proof}
Assume for simplicity that $q_0 = 1$.
As we already saw,
the sets $z_0 \Lo_{n+1}^1$ and $\cU_{z_0}$ are equivalent
via the maps $\bQ_{z_0}$ and $\bL_{z_0}$,
that is, these maps respect the Bruhat stratification.

A neighborhood $U_0 \subset z_0 \Lo_{n+1}^1$ of $z_0$ 
is \textit{star shaped}
if $M \in U_0$ and $0 < t \le 1$ imply $\ray(t,M) \in U_0$.
There are arbitrarily small star shaped neighborhoods $U_0$ of $z_0$.
Since rays respect the Bruhat stratification,
we may as well restrict our attention to such small neighborhoods $U_0$
or to $U_1 = \bQ_{z_0}[U_0] \subset \cU_{z_0}$.

Via a projective transformation
(see \cite{Goulart-Saldanha0,Goulart-Saldanha1}),
$z_0$ and $U_1$ are moved to
$z_2 \in \bQ[\Pos_{\rho_0}] \subseteq \cU_{1} \cap \Bru_{z_0}$ and 
$U_2 \subset \cU_{1} \cap \cU_{z_0}$, respectively.
The Bruhat stratification is again respected.
The map $\bL$ now takes $z_2$ and $U_2$
to $L_3 \in \Pos_{\rho_0}$ and $U_3 \subset \Lo_{n+1}^1$.
Thus, the Bruhat stratification of $z_0 \Lo_{n+1}^1$
is equivalent to the Bruhat stratification
of a small neighborhood $U_3 \subset \Lo_{n+1}^1$
of a point $L_3 \in \Pos_{\rho_0}$.

The desired sets $\Pos_{z_0,\rho}$ correspond to the intersections
$\Pos_{\rho} \cap\,U_3$ and are therefore submanifolds
of the correct dimension.
The fact that these intersections are nonempty and connected
follows from the rays and the remarks
after Equation~\eqref{equation:boundarypos}.
\end{proof}

\begin{example}
\label{example:z0z1z2x}
Take $n = 3$,
$\rho_0 = ac = ca$, $\rho_1 = abc$, $\rho_2 = cba$,
$z_0 = -\acute a\acute c = -\acute c\acute a$,
$z_1 = -\acute a\acute b\acute c$
and $z_2 = -\acute c\acute b\acute a$,
as in
Examples~\ref{example:z0z1z2} and \ref{example:z0z1z2w}. 
We have 
$\chop(z_0) = \chop(z_1) = \chop(z_2) = \acute\eta$ and
$\adv(z_0) = \adv(z_1) = \adv(z_2) = -\acute\eta$
(see Equation~\eqref{equation:advchop}
and Theorem 3 in \cite{Goulart-Saldanha0}).
Notice that $\Bru_{z_0} \subset \overline{\Bru_{z_i}}$
for $i = 1$ and $i = 2$.

If a
convex curve $\Gamma: [-1,1] \to \cU_{z_0}$
satisfies
$\Gamma(0) \in \Bru_{z_0} \cup \Bru_{z_1} \cup \Bru_{z_2}$
we then have $\Gamma(t) \in \Bru_{\acute\eta}$ for $t < 0$
and $\Gamma(t) \in \Bru_{-\acute\eta}$ for $t > 0$
(see Equation \eqref{equation:advchop}).
A computation shows that
\[ \Pos_{z_0,\rho_1} = \Neg_{z_0,\rho_1}, \qquad
\Pos_{z_0,\rho_2} = \Neg_{z_0,\rho_2} \]
are the preimages under $\bQ_{z_0}$ 
of $\cU_{z_0} \cap \Bru_{z_1}$ and $\cU_{z_0} \cap \Bru_{z_2}$,
described at the end of Example~\ref{example:z0z1z2w}.
% are the sets described in Equation~\eqref{equation:Posz1z2}.
In this example we also have
the rather exceptional equalities:
\[ \Pos_{z_0,\rho_1} = \BL_{z_0,z_1}, \qquad
% \bQ^{-1}[\,\cU_{z_0} \cap \Bru_{z_1}], \qquad
\Pos_{z_0,\rho_2} = \BL_{z_0,z_2}. \]
% \bQ^{-1}[\,\cU_{z_0} \cap \Bru_{z_2}]. \]

For $t > 0$, $t$ small,
we have
$-\acute a \alpha_2(t) \acute c \in \Pos^{\Spin}_{z_0,\rho_1}$ and
$-\acute c \alpha_2(t) \acute a \in \Pos^{\Spin}_{z_0,\rho_2}$
% $-\acute a \alpha_2(t) \acute c \in
% \Bru_{z_1} \cap \;\cU_{z_0}$
% and
% $-\acute c \alpha_2(t) \acute a \in
% \Bru_{z_2} \cap \;\cU_{z_0}$
(see Theorem~1 in~\cite{Goulart-Saldanha0}).
\end{example}

\bigbreak

\section{Closed subsets and neighborhoods}
\label{section:closed}

Recall that $M^\ast$ is the set of words using only letters from $M$,
the subset of $S_{\PA} < S_4$ defined in  Equation \eqref{equation:MY}.
We need to write explicit words $w \in M^\ast$ often,
and the notation $(aba,cba)$ is somewhat cumbersome.
We therefore often write instead $[aba][cba]$.
Here, parentheses and commas are omitted.
Inside each pair of brackets we have a reduced word
(such as $aba$ or $cba$) for an element of $M \subset S_4$;
this element is a letter in the word $w \in M^\ast$.

We first prove that each subset
$\cM_{\mu_0,\mu_1} \subset \cL_3$ is closed;
the function $\mu$ and the sets $\cM_{\mu_0,\mu_1}$ 
are defined in Section~\ref{section:context},
above Proposition~\ref{prop:M}.
We need a few lemmas;
see Equation~\eqref{equation:sqsubseteq}
for the definition of the relation $\sqsubseteq$.

\begin{lemma}
\label{lemma:sqsubseteq}
If $w \in M^\ast$ and $\sigma \in S_{4} \smallsetminus \{e\}$
satisfy $w \sqsubseteq \sigma$
then either $w = \sigma$ (so that $w$ is a word of length $1$) or $(w,\sigma)$
is one of the following:
\[ ([aba],bacb), \quad ([bcb],bacb), \quad
([aba][cba],\eta), \quad ([cba][aba],\eta). \]
In all cases, $\sigma \in M$ and $\mu(w) = \mu(\sigma)$.
\end{lemma}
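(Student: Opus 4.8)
The plan is to reduce the claim to a finite, purely combinatorial verification about the partial order $\sqsubseteq$ on $\Word_3$, using the explicit description of $\sqsubseteq$ from \cite{Goulart-Saldanha-cw}. First I would unpack what $w \sqsubseteq \sigma$ means when $\sigma$ is a single letter: since the cell $c_\sigma$ has dimension $\inv(\sigma)-1$, a word $w$ with $w \sqsubseteq \sigma$ must satisfy $\dim(w) \le \dim(\sigma) = \inv(\sigma)-1 \le \inv(\eta)-1 = 5$, so only finitely many words $w$ are candidates. Moreover each letter of $w$ lies in $M = \{aba, bacb, bcb, cba, \eta\}$, whose elements have $\inv$ equal to $3,4,3,3,6$ respectively, hence contribute $2,3,2,2,5$ to $\dim(w)$; so $w$ has length at most $2$, and if it has length $2$ then both letters are among $\{aba, bcb, cba\}$ (each contributing $2$), forcing $\dim(w)=4$ and $\sigma=\eta$.

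Next I would handle the two length regimes. For $|w| = 1$, write $w = [\tau]$ with $\tau \in M$; then $[\tau] \sqsubseteq \sigma$ should force $\tau \le \sigma$ in a suitable Bruhat-type sense together with the dimension constraint $\inv(\tau)-1 \le \inv(\sigma)-1$. The only way to ``grow'' an element of $M$ by exactly one inversion inside $S_4$ and land back in $M$ is $aba \to bacb$ and $bcb \to bacb$ (one checks $aba \le bacb$ and $bcb \le bacb$ in the strong Bruhat order, while $cba$ is not below $bacb$); growing by zero inversions gives $w = \sigma$. If $\inv(\sigma) - \inv(\tau) \ge 2$ I would argue that $[\tau] \sqsubseteq \sigma$ fails because of how $\sqsubseteq$ is built (it does not allow jumps of more than one letter-at-a-time in the relevant sense for $n=3$), leaving only $\sigma = \eta$ to rule out, e.g.\ $[aba] \not\sqsubseteq \eta$ and $[bcb] \not\sqsubseteq \eta$ and $[cba] \not\sqsubseteq \eta$ as single-letter comparisons. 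For $|w| = 2$ with $\sigma = \eta$, the candidates are the nine pairs from $\{aba, bcb, cba\}^2$; I would check directly against the computed table of $\sqsubseteq$ for $n = 3$ that exactly $[aba][cba] \sqsubseteq \eta$ and $[cba][aba] \sqsubseteq \eta$ hold, and the other seven fail (in particular $[aba][bcb]$, $[bcb][aba]$, $[aba][aba]$, $[bcb][bcb]$, $[bcb][cba]$, $[cba][bcb]$, $[cba][cba]$ are all not $\sqsubseteq \eta$). The final sentence of the lemma is then immediate: in each surviving pair $\sigma \in M$ by inspection, and $\mu(w) = \mu(\sigma)$ since $\mu(aba) = \mu(bcb) = (1,0) = \mu(bacb)$ and $\mu(aba) + \mu(cba) = (1,0) + (0,1) = (1,1) = \mu(\eta) = \mu(cba) + \mu(aba)$.

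The main obstacle I anticipate is that the definition of $\sqsubseteq$ in \cite{Goulart-Saldanha-cw} is given in terms of the combinatorics of reduced words and the glueing of the CW complex $\cD_3$, so the ``it does not allow jumps'' step above is not quite a one-line Bruhat-order fact; I would need to invoke the explicit computation of $\sqsubseteq$ for $n = 3$ (promised at the end of Section~\ref{section:review}) and simply read off the finitely many comparisons, rather than re-deriving them. If that table is available I expect the whole proof to be a short, mechanical case check; if it is not stated in a directly usable form, I would instead reconstruct the relevant portion of $\sqsubseteq$ restricted to words of $\dim \le 5$ whose letters lie in $M$, which is still a bounded computation but adds a paragraph of bookkeeping.
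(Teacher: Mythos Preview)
Your overall strategy---reduce to a finite case check using dimension bounds---is sound and would eventually succeed, but you miss the key shortcut the paper uses, and a couple of your intermediate reductions are not correct as stated.

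The paper's proof does not argue via dimension at all. Instead it observes that $w \sqsubseteq \sigma$ forces $\hat w = \hat\sigma$ (curves in the same closure lie in the same $\cL_3(1;q)$), and since every letter of $M$ has $\hat{\cdot} \in Z(\Quat_4)$, so does $\hat w$, hence $\hat\sigma \in Z(\Quat_4)$. By Lemma~2.2 of \cite{Goulart-Saldanha-cw} this already pins $\sigma$ down to the seven elements of $S_{\PA}\smallsetminus\{e\} = M \cup \{ac,abc\}$. After that the listing of $w \in M^\ast$ with $w \sqsubseteq \sigma$ is genuinely short. This algebraic constraint is what makes the proof two lines rather than a page of bookkeeping.

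Two concrete gaps in your reduction: first, $\dim(w)=4$ does \emph{not} force $\sigma=\eta$ by dimension alone, since $S_4$ has three elements with $\inv=5$ (the three $\eta a_i$), each giving $\dim(\sigma)=4$; you must rule these out, and the clean way is exactly the $\hat\sigma\in Z(\Quat_4)$ argument above (none of the $\inv=5$ elements lies in $S_{\PA}$). Second, your claim that a length-$2$ word in $M^\ast$ must use only letters from $\{aba,bcb,cba\}$ overlooks words like $[bacb][aba]$ with $\dim=5$; you need the (true but unstated) fact that $w\sqsubseteq\sigma$ with $w\ne\sigma$ forces $\dim(w)<\dim(\sigma)$ strictly. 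Finally, your heuristic that $\sqsubseteq$ ``does not allow jumps of more than one'' is simply false in general, so the $|w|=1$ analysis really does require either the $\hat{\cdot}$ constraint or the full table. Your fallback---read everything off the computed $\sqsubseteq$ table for $n=3$---is valid, but the paper's route is both shorter and more conceptual.
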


\begin{proof}
The first remark is that $w \in M^\ast$
implies $\hat w \in Z(\Quat_4)$ which then implies
$\hat\sigma \in Z(\Quat_4)$ and therefore
(by brute force or by Lemma 2.2 in \cite{Goulart-Saldanha-cw})
we have
$\sigma \in S_{\PA} \smallsetminus \{e\} = M \cup \{ac, abc\}$.
For each such $\sigma$, we list all possible words $w \in M^\ast$
with $w \sqsubseteq \sigma$:
this is a computation, and not a very long one.
\end{proof}

\begin{lemma}
\label{lemma:sqsubseteq2}
Consider $w_0 \in M^\ast$ and $w_1 \in \Word_3$.
If $w_0 \sqsubseteq w_1$ then $w_1 \in M^\ast$ and $\mu(w_0) = \mu(w_1)$.
\end{lemma}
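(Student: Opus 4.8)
The plan is to reduce Lemma~\ref{lemma:sqsubseteq2} to the single-letter case already settled in Lemma~\ref{lemma:sqsubseteq}, exploiting the fact that the order $\sqsubseteq$ on $\Word_3$ is compatible with concatenation of words. Concretely, write $w_1 = \sigma_1 \cdots \sigma_\ell$ with $\sigma_i \in S_4 \smallsetminus \{e\}$. By the construction of $\sqsubseteq$ in \cite{Goulart-Saldanha-cw}, the relation $w_0 \sqsubseteq w_1$ produces a factorization $w_0 = w_0^{(1)} w_0^{(2)} \cdots w_0^{(\ell)}$ of $w_0$ into $\ell$ consecutive (a priori possibly empty) subwords, one per letter of $w_1$, with $w_0^{(i)} \sqsubseteq \sigma_i$ for every $i$. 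This is the step everything hinges on: I would quote it directly from \cite{Goulart-Saldanha-cw}, where $\sqsubseteq$ is defined letter-by-letter on the right-hand word, or, if a self-contained argument is preferred, obtain it by a one-line induction on $\ell$ that peels off the last letter of $w_1$ together with the corresponding final block of $w_0$.

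Granting the factorization, the remainder is bookkeeping. Since $w_0 \in M^\ast$ and each $w_0^{(i)}$ is a consecutive block of $w_0$, each $w_0^{(i)}$ again lies in $M^\ast$. Moreover no $w_0^{(i)}$ can be the empty word: the empty word belongs to $M^\ast$, so if some $w_0^{(i)}$ were empty then $\emptyset \sqsubseteq \sigma_i$ would contradict Lemma~\ref{lemma:sqsubseteq}, since the empty word is neither a length-one word equal to $\sigma_i$ nor one of the four listed pairs. Hence each $w_0^{(i)}$ is a nonempty word in $M^\ast$ with $w_0^{(i)} \sqsubseteq \sigma_i$, so Lemma~\ref{lemma:sqsubseteq} applies verbatim and gives $\sigma_i \in M$ together with $\mu\bigl(w_0^{(i)}\bigr) = \mu(\sigma_i)$ for every $i$.

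Finally I would assemble the conclusion. Since every letter of $w_1$ now lies in $M$, we get $w_1 \in M^\ast$, which is the first assertion. For the second, summing $\mu$ over the factorization,
\[ \mu(w_0) = \sum_{i=1}^{\ell} \mu\bigl(w_0^{(i)}\bigr) = \sum_{i=1}^{\ell} \mu(\sigma_i) = \mu(w_1). \]
The only genuine content beyond Lemma~\ref{lemma:sqsubseteq} is the compatibility of $\sqsubseteq$ with concatenation, and since that is either immediate from the definition or provable by a short induction, I expect the main obstacle to be purely presentational: stating that compatibility in precisely the form needed and making sure the empty-block case is correctly excluded via Lemma~\ref{lemma:sqsubseteq}.
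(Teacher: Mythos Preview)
Your proposal is correct and follows essentially the same route as the paper: factor $w_0$ into blocks aligned with the letters of $w_1$ via the definition of $\sqsubseteq$, apply Lemma~\ref{lemma:sqsubseteq} blockwise, and sum. The only difference is your separate paragraph ruling out empty blocks, which is redundant: Lemma~\ref{lemma:sqsubseteq} is stated for arbitrary $w \in M^\ast$ (which includes the empty word), so you may invoke it directly on each $w_0^{(i)}$ without first checking nonemptiness---the conclusions $\sigma_i \in M$ and $\mu(w_0^{(i)}) = \mu(\sigma_i)$ are all you need.
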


\begin{proof}
Write $w_1 = \sigma_1\cdots\sigma_\ell$:
it follows from the definition of $\sqsubseteq$
that $w_0$ is a concatenation $w_{1,0}\cdots w_{\ell,0}$
of words $w_{i,0} \in M^\ast$ with $w_{i,0} \sqsubseteq \sigma_i$.
It follows from Lemma \ref{lemma:sqsubseteq} that
$\sigma_i \in M$ and $\mu(w_{i,0}) = \mu(\sigma_i)$ (for all $i$).
The result follows by addition:
$\mu(w_0) = \sum_i \mu(w_{i,0}) = \sum_i \mu(\sigma_i) = \mu(w_1)$.
\end{proof}

\begin{prop}
\label{prop:closed}
For any $(\mu_0,\mu_1) \in \NN^2$,
the subset $\cM_{\mu_0,\mu_1} \subset \cL_3$ is closed.
\end{prop}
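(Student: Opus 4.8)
The plan is to show that the complement $\cL_3 \smallsetminus \cM_{\mu_0,\mu_1}$ is open, using the stratification $\cL_3 = \bigsqcup_{w \in \Word_3} \cL_3[w]$ together with the partial order $\sqsubseteq$ that controls which strata appear in the closure of a given one. The key external input is that the closure of $\cL_3[w_1]$ is contained in the union of $\cL_3[w_0]$ over $w_0 \sqsubseteq w_1$ (together with the $\preceq$ condition), as recalled from \cite{Goulart-Saldanha-cw}; equivalently, if $\Gamma_n \to \Gamma$ with $\iti(\Gamma_n) = w_1$ fixed and $\iti(\Gamma) = w_0$, then $w_0 \sqsubseteq w_1$. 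Actually what I need is the contrapositive packaged correctly: I want to show that if $\Gamma \in \cM_{\mu_0,\mu_1}$ and $\Gamma_n \to \Gamma$, then eventually $\Gamma_n \in \cM_{\mu_0,\mu_1}$.

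First I would recall (from \cite{Goulart-Saldanha1}, Theorem 1 in the excerpt's phrasing) that $\sing(\Gamma)$ depends continuously on $\Gamma$ in the Hausdorff metric, and more precisely that the stratification is such that near any $\Gamma$ the itineraries of nearby curves are obtained from $\iti(\Gamma)$ by the local $\chop$/$\adv$ refinement: each singular instant $t_j$ of $\Gamma$ with $\Gamma(t_j) \in \Bru_{\eta\sigma_j}$ either persists or splits into several transversal-or-lower crossings whose combined "letter content" sits below $\sigma_j$ in an appropriate sense. The upshot, which is genuinely the content of the cited CW-structure results, is: for $\Gamma$ with $\iti(\Gamma) = w_0$ there is a neighborhood $V$ of $\Gamma$ in $\cL_3$ such that every $\Gamma' \in V$ has $\iti(\Gamma') = w_1$ for some $w_1$ with $w_0 \sqsubseteq w_1$ (and $w_0 \preceq w_1$). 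I would state this as the precise local statement I am invoking, citing it carefully.

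Granting that, the proof is immediate from the two lemmas just proved. Let $\Gamma \in \cM_{\mu_0,\mu_1}$, so $w_0 := \iti(\Gamma) \in M^\ast$ and $\mu(w_0) = (\mu_0,\mu_1)$. Take the neighborhood $V$ as above. Any $\Gamma' \in V$ has $\iti(\Gamma') = w_1$ with $w_0 \sqsubseteq w_1$; by Lemma~\ref{lemma:sqsubseteq2}, $w_1 \in M^\ast$ and $\mu(w_1) = \mu(w_0) = (\mu_0,\mu_1)$. Hence $\Gamma' \in \cM$ and $\mu(\Gamma') = (\mu_0,\mu_1)$, i.e. $\Gamma' \in \cM_{\mu_0,\mu_1}$. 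Thus $V \subset \cM_{\mu_0,\mu_1}$, so $\cM_{\mu_0,\mu_1}$ contains a neighborhood of each of its points — but that shows it is open, not closed, so this is the wrong direction and I must instead run the argument on the complement.

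So the correct organization: let $\Gamma \notin \cM_{\mu_0,\mu_1}$ and set $w_0 = \iti(\Gamma)$. Again pick the neighborhood $V$ with $\iti(\Gamma') \sqsupseteq w_0$ for all $\Gamma' \in V$. If $w_0 \notin M^\ast$, then $w_0$ contains a letter from $\tilde Y$; since passing to $w_1 \sqsupseteq w_0$ can only refine letters (Lemma~\ref{lemma:sqsubseteq} shows that the only refinements internal to $M$ stay in $M$ with the same $\mu$, and a $\tilde Y$ letter cannot disappear — this needs a one-line check that $\sqsubseteq$ never deletes letters and never turns a $\tilde Y$ letter into an $M^\ast$ subword, which follows because $\hat w$ would have to land outside $Z(\Quat_4)$), every $\Gamma' \in V$ also has itinerary outside $M^\ast$, so $V \cap \cM_{\mu_0,\mu_1} = \varnothing$. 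If instead $w_0 \in M^\ast$ but $\mu(w_0) \neq (\mu_0,\mu_1)$, then by Lemma~\ref{lemma:sqsubseteq2} every $\Gamma' \in V$ has either $\iti(\Gamma') \notin M^\ast$ or $\mu(\iti(\Gamma')) = \mu(w_0) \neq (\mu_0,\mu_1)$; either way $\Gamma' \notin \cM_{\mu_0,\mu_1}$. In both cases $V \subset \cL_3 \smallsetminus \cM_{\mu_0,\mu_1}$, proving the complement is open.

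The main obstacle is making rigorous the local statement "$\iti(\Gamma') \sqsupseteq \iti(\Gamma)$ for $\Gamma'$ near $\Gamma$, and moreover no letter of $\iti(\Gamma)$ is ever lost in the refinement." The first half should be citable more or less verbatim from the CW-complex paper; the "no letter lost" part is the place where I'd need to be careful, because a priori a crossing could become transversal under perturbation and vanish from the itinerary. I expect this does not happen here precisely because the singular moments of $\Gamma \in \cM$ correspond to permutations $\eta\sigma$ with $\sigma \in M$, and the $\chop$/$\adv$ local picture at such a point (Theorem 3 of \cite{Goulart-Saldanha0}) forces the nearby singular set to carry total letter content $\sqsupseteq \sigma$ with the same $\mu$-value — so I would phrase the needed input as a local lemma about the behaviour of $\sing$ near strata indexed by $M$, deduce it from $\chop$/$\adv$ and the $\sqsubseteq$-description, and then the global closedness follows formally as above.
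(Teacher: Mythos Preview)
You have the direction of the partial order $\sqsubseteq$ reversed throughout, and this is what makes the argument fail. The cited fact (Equation~(10) in \cite{Goulart-Saldanha-cw}) says that if $\cL_3[w_1]\cap\overline{\cL_3[w_0]}\ne\varnothing$ then $w_0\sqsubseteq w_1$: limits of curves with itinerary $w_0$ have itinerary $\sqsupseteq w_0$, and hence perturbations of a curve with itinerary $w_1$ have itinerary $\sqsubseteq w_1$, not $\sqsupseteq w_1$. (Think of $\cL_3[[aba]]$, codimension~$2$: a generic perturbation has itinerary $aa$ or $bb$, etc., which lie \emph{below} $[aba]$ for $\sqsubseteq$; see Figure~\ref{fig:ababcb}.) This is why your first attempt appeared to prove that $\cM_{\mu_0,\mu_1}$ is open: the conclusion is false because the hypothesis you invoked is false, not because you were arguing on the wrong set.

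With the reversed direction your second attempt also breaks down concretely. You need ``$w_0\notin M^\ast$ and $w_0\sqsubseteq w_1\Rightarrow w_1\notin M^\ast$'', but this is false: for instance $aa\sqsubseteq[aba]\sqsubseteq[bacb]$ with $aa\notin M^\ast$ and $[bacb]\in M^\ast$. Lemma~\ref{lemma:sqsubseteq2} only gives the implication in the other direction: if the \emph{smaller} word lies in $M^\ast$, then so does the larger one (with the same $\mu$). The paper uses exactly this. For each $w_0\in M^\ast$ with $\mu(w_0)=(\mu_0,\mu_1)$ one shows $\overline{\cL_3[w_0]}\subseteq\cM_{\mu_0,\mu_1}$, because any $w_1$ with $\cL_3[w_1]\cap\overline{\cL_3[w_0]}\ne\varnothing$ satisfies $w_0\sqsubseteq w_1$ and hence, by Lemma~\ref{lemma:sqsubseteq2}, $w_1\in M^\ast$ with $\mu(w_1)=(\mu_0,\mu_1)$. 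Since there are only finitely many words in $M^\ast$ with a given $\mu$-value (each letter contributes at least one to $\mu_0+\mu_1$), $\cM_{\mu_0,\mu_1}$ is a finite union of strata whose closures it contains, and is therefore closed. Your ``complement is open'' strategy can be made to work once the direction is fixed, but it then reduces to this same use of Lemma~\ref{lemma:sqsubseteq2}.
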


\begin{proof}
Consider $w_0 \in M^\ast$ with $\mu(w_0) = (\mu_0,\mu_1)$
and $w_1 \in \Word_3$.
If $\cL_3[w_1] \cap \overline{\cL_3[w_0]} \ne \varnothing$,
Equation (10) in \cite{Goulart-Saldanha-cw} tells us that
$w_0 \sqsubseteq w_1$.
Lemma \ref{lemma:sqsubseteq2} then implies
$w_1 \in M^\ast$ and $\mu(w_0) = \mu(w_1)$.
We therefore have $\cL_3[w_1] \subseteq \cM_{\mu_0,\mu_1}$.
Thus, $\overline{\cL_3[w_0]} \subseteq \cM_{\mu_0,\mu_1}$.
Since this holds for every $w_0 \in M^\ast$ with $\mu(w_0) = (\mu_0,\mu_1)$
it follows that $\cM_{\mu_0,\mu_1}$ is closed.
\end{proof}

The proof that each closed subset $\cM_{\mu_0,\mu_1}$
is a topological submanifold is longer,
and will require that we study neighborhoods of curves
$\Gamma_0 \in \cM_{\mu_0,\mu_1}$.
We are particularly interested in detecting
other curves in $\cM$ in such neighborhoods.

\begin{remark}
\label{remark:neighborhood}
Given a curve $\Gamma_0$ with known itinerary,
what can we say about possible itineraries of neighboring curves $\Gamma$?
That is not an easy question,
and is related to the Shapiro-Shapiro conjecture
\cite{Saldanha-Shapiro-Shapiro,Saldanha-Shapiro-Shapiro1}.
The partial orders $\preceq$ and $\sqsubseteq$
give us some incomplete but useful information.

Recall that $\Bru_\eta \subset \Spin_4$ is an open dense subset.
Given $\Gamma_0 \in \cL_3$, $\Gamma_0: [0,1] \to \Spin_4$,
let $\sing(\Gamma_0) = \{t_1 < \cdots < t_\ell\} \subset (0,1)$
be its \textit{singular set}, so that, for $t \in (0,1)$,
\[ t \in \sing(\Gamma_0) \quad\iff\quad \Gamma_0(t) \notin \Bru_\eta. \]
Assume that $\Gamma_0(t_k) \in \Bru_{z_k} \subset \Bru_{\rho_k}$,
$z_k \in \acute\rho_k\Quat_{4} \subset \widetilde\B_{4}^{+}$,
$\rho_k = \eta\sigma_k \in S_4$,
so that the itinerary of $\Gamma_0$ is
$\sigma_1\cdots\sigma_\ell \in \Word_4$.
Given $\sing(\Gamma_0)$, take $\epsilon_0 > 0$ with
$8\epsilon_0 < \min_k |t_k - t_{k-1}|$
and consider the intervals
\[ 
J_k = [t_k-\epsilon_0,t_k+\epsilon_0], \qquad
\tilde J_k = \begin{cases}
[0,t_1-\epsilon_0], & k = 1, \\
[t_{k-1}+\epsilon_0,t_k-\epsilon_0], & 1 < k \le \ell, \\
[t_\ell+\epsilon_0,1], & k = \ell + 1. \end{cases}
\]
The restriction $\Gamma_0|_{\tilde J_k}$ has image contained in
$\Bru_{\chop(z_k)} = \Bru_{\adv(z_{k-1})}$
where $\chop(z_k) = \adv(z_{k-1})
\in \acute\eta\Quat_4 \subset \widetilde\B_{4}^{+}$,
so that $\Bru_{\chop(z_k)}$ is a connected component of $\Bru_\eta$.
The restriction $\Gamma_0|_{\tilde J_k}$ is therefore convex
(Lemma 2.6 in \cite{Goulart-Saldanha1})
with empty itinerary;
if $\Gamma: \tilde J_k \to \Bru_\eta$
is near the restriction of $\Gamma_0$
then its itinerary is also empty.
% (Theorem 1 and Corollary 3.1 in \cite{Goulart-Saldanha1}).
For sufficiently small $\epsilon_0 > 0$,
the restriction $\Gamma_0|_{J_k}$ has image contained in $\cU_{z_k}$:
recall that $\cU_{z_k} = z_k\,\cU_1 \subset \Spin_4$
is an open neighborhood of $z_k$ containing $\Bru_{z_k}$.
The restriction $\Gamma_0|_{J_k}$ is therefore also convex,
and its itinerary is $\sigma_k$;
there are usually many possible itineraries for
$\Gamma: J_k \to \Bru_\eta$ near the restriction of $\Gamma_0$,
but all such possible itineraries are non empty
(by Theorem 1 in \cite{Goulart-Saldanha1}).

Let $W_{k,+} \subset \cU_{z_k} \cap \Bru_{\adv(z_{k})}$
and $W_{k,-} \subset \cU_{z_k} \cap \Bru_{\chop(z_{k})}$
be small contractible neighborhoods
of $\Gamma_0(t_k+\epsilon_0)$ and $\Gamma_0(t_k-\epsilon_0)$, respectively.
The sets $W_{k,\pm}$ can be chosen sufficiently small such that
for all $z_{k-1,+} \in W_{k-1,+}$ and $z_{k,-} \in W_{k,-}$
there exist convex curves $\Gamma: \tilde J_k \to \Bru_{\chop(z_k)}$
with $\Gamma(t_{k-1}+\epsilon_0) = z_{k-1,+}$
and $\Gamma(t_k-\epsilon_0) = z_{k,-}$.
There exists a contractible neighborhood $\tilde U_k$ of 
the restriction $\Gamma_0|_{\tilde J_k}$ in $\cL_3(\cdot,\cdot)$
and a homeomorphism $\phi_k: \tilde U_k \to W_{k-1,+} \times W_{k,-} \times H$
such that, for all $\Gamma \in \tilde U_k$,
\[ \phi_k(\Gamma) = (\Gamma(t_{k-1}+\epsilon_0),\Gamma(t_k-\epsilon_0),\ast); \]
here $H$ denotes a separable infinite dimensional Hilbert space.
There are similar statements for initial and final jets
(see Section 6 in \cite{Goulart-Saldanha1},
including Lemma 6.7).

Thus, in order to study itineraries 
in a neighborhood of $\Gamma_0$
it suffices to study itineraries in neighborhoods
of the restrictions $\Gamma_0|_{J_k}$.
More precisely, there exist neighborhoods
$U \subset \cL_3$ of $\Gamma_0$
and $U_k \subset \cL_3(\cdot,\cdot)$
of the restrictions $\Gamma_0|_{J_k}$ in $\cL_3$
with the following properties.
For all $\Gamma \in U$,
the singular set of $\Gamma$ is contained in $\bigcup_k J_k$
and therefore the itinerary of $\Gamma$ is the concatenation
of the itineraries of the restrictions $\Gamma|_{J_k}$
(again by Theorem 1 in \cite{Goulart-Saldanha1}).
The map $\phi$ defined in $U$ 
taking $\Gamma$ to 
$(\Gamma|_{J_1}, \ldots, \Gamma|_{J_\ell})$
assumes values in
$U_1 \times \cdots \times U_\ell$.
There exists a homeomorphism
$\Phi: U \to (U_1 \times \cdots \times U_\ell) \times H$
with $\Phi(\Gamma) = (\phi(\Gamma), \ast)$.

This procedure is closely related to the constructions
in \cite{Goulart-Saldanha1, Goulart-Saldanha-cw},
particularly in the study of the partial orders
$\preceq$, $\sqsubseteq$ and of
the boundaries of cells $c_\sigma$ in the CW complex $\cD_3$.
\end{remark}

In the present section we consider the easier cases
$\sigma \in \{aba,bcb,cba\}$.

Figure \ref{fig:ababcb} show us the boundaries
of the cells $c_{[aba]}$ and $c_{[bcb]}$.
These boundaries have been extensively studied
in \cite{Goulart-Saldanha1} and \cite{Goulart-Saldanha-cw}.
We focus on $[aba]$; the case $[bcb]$ is similar.

\begin{figure}[ht]
\def\svgwidth{12cm}
\centerline{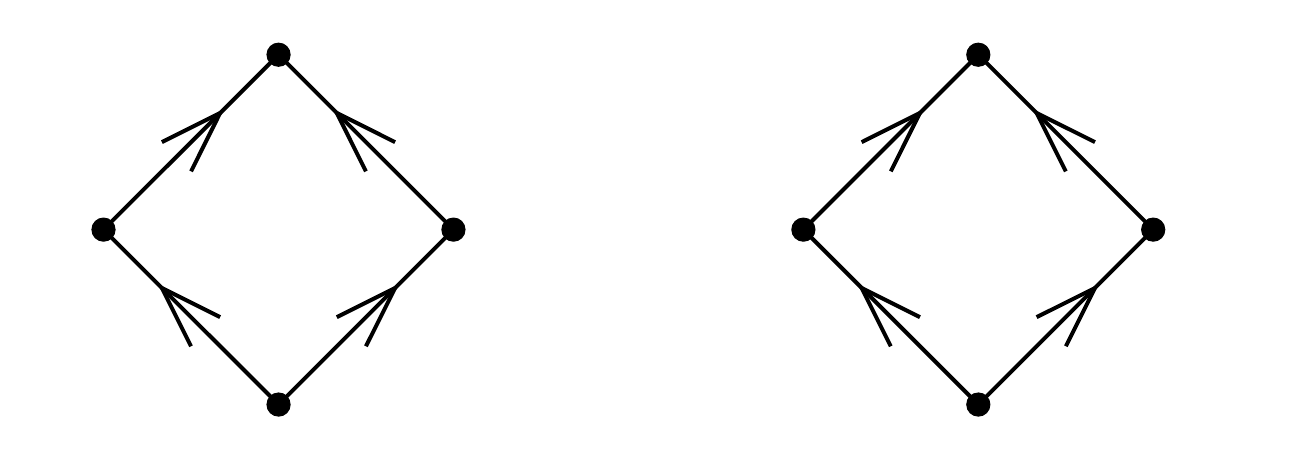}
%\centerline{\includegraphics[width =10cm]{ababcb.pdf}}
\caption{The cells $c_{[aba]}$ and $c_{[bcb]}$.}
\label{fig:ababcb}
\end{figure}

The only words $w \in \Word_3$ with $w \sqsubseteq [aba]$
are the $9$ words shown in Figure \ref{fig:ababcb};
reading cyclically, the new $8$ words are:
\[ aa, a[ba], abab, [ab]b, bb, b[ab], baba, [ba]a. \]
Clearly, each one contains a letter of dimension $0$
and is therefore not in $M^\ast$. 
If $\Gamma \in \cL_3$ has itinerary $[aba]$
then in small neighborhood of $\Gamma$
all curves in $\cM$ also have itinerary $[aba]$.
In particular, near $\Gamma$ the subset $\cM_{1,0} \subset \cL_3$
is a smooth submanifold of codimension $2$
(see Equation \eqref{equation:dimw}).
Also, the intersection of $\cM$ with this neighborhood 
is contained in $\cM_{1,0}$.
%  More precisely, a small neighborhood has the dual normal form:
%  two hypersurfaces of codimension $1$
%  intersect in a topologically transversal manner
%  along a contractible submanifold of codimension $2$,
%  with the nine possible itineraries shown in the figure.

%  \begin{figure}[ht]
%  \def\svgwidth{12cm}
%  \centerline{\input{cbaabc.pdf_tex}}
%  \caption{The cells $c_{[cba]}$ and $c_{[abc]}$.}
%  \end{figure}

\begin{figure}[ht]
\def\svgwidth{80mm}
\centerline{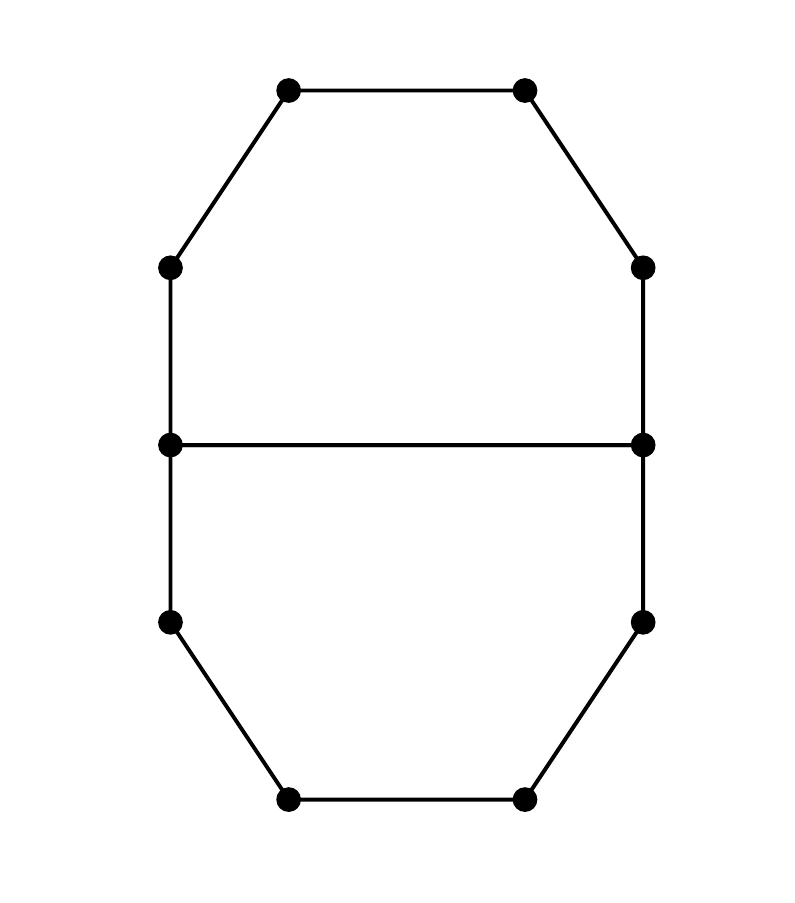}
%\centerline{\includegraphics[width =10cm]{ababcb.pdf}}
\caption{The cells $c_{[abc]}$ and $c_{[cba]}$.}
\label{fig:cbaabc}
\end{figure}

Figure \ref{fig:cbaabc}
shows us the boundaries of $c_{[cba]}$ and of its twin $c_{[abc]}$.
Notice that the edge $c_{[ac]}$ is common to both boundaries.
The only words $w \in \Word_3$ with $w \sqsubseteq [cba]$
are the $13$ shown in the upper half of Figure \ref{fig:cbaabc}.
Besides $[cba]$ itself, they can be read cyclically as
\[ ac, [ac], ca, c[ba], cbab, cba[cb], cbacbc, cb[ac]bc,
cbcabc, [cb]abc, babc, [ba]c. \]
Each of these words except $[ac]$ contains a letter of dimension $0$
and is therefore not in $M^\ast$. 
Also, by construction,
we have $[ac] \notin M$,
taking care of this last case.
Thus, if $\Gamma \in \cL_3$ has itinerary $[cba]$
then in small neighborhood of $\Gamma$
all curves in $\cM$ also have itinerary $[cba]$.
As in the previous case,
near $\Gamma$ the subset $\cM_{0,1} \subset \cL_3$
is a smooth submanifold of codimension $2$ 
(see again Equation \eqref{equation:dimw}) 
and the intersection of $\cM$ with this neighborhood 
is contained in $\cM_{0,1}$.

Clearly, the only word $w \in M^\ast$ 
with $\mu(w) = (0,1)$ is $w = [cba]$:
we thus have $\cM_{0,1} = \cL_3[[cba]]$.
A short computation verifies that $[cba] \sqsubseteq \sigma$
implies $\sigma = cba$:
thus, $\cL_3[[cba]] \subset \cL_3$ is a closed subset.
It follows that $\cM_{0,1} \subset \cL_3$ is a closed subset
and a submanifold of codimension $2$,
consistently with Proposition \ref{prop:M}.

%  \begin{figure}[ht]
%  \def\svgwidth{\linewidth}
%  \centerline{\input{cD3d.pdf_tex}}
%  %\centerline{\includegraphics[width =12cm]{cD3.pdf}}
%  \caption{The cells $c_{[abc]}$ and $c_{[cba]}$.}
%  \label{fig:cD3}
%  \end{figure}
%  \bigskip

\begin{remark}
\label{remark:abcaccba}
Following \cite{Goulart-Saldanha-cw},
let $\Ideal_{[0]} \subset \Word_3$
be the set of words with at least one letter of dimension $0$.
This set is discussed in Proposition 11.3 in \cite{Goulart-Saldanha-cw}.
The set $\Ideal_{[0]}$ is contained in the larger set
$\Ideal_{Y_2}$ of words containing at least one letter
in $Y_2 = \{a,b,c,ba,ab,bc\} \subset S_4$.
The set $\Ideal_{Y_2}$ is discussed
in Proposition 13.2 in \cite{Goulart-Saldanha-cw}.

The boundaries of $c_{[aba]}$ or $c_{[bcb]}$
are formed by cells with labels in $\Ideal_{[0]}$
(see Figure~\ref{fig:ababcb}).
The boundaries of $c_{[abc]}$ or $c_{[cba]}$ are not.
However, as shown in Figure~\ref{fig:cbaabc},
the union $c_{[abc]} \cup c_{[ac]} \cup c_{[cba]}$
is a 2-disk and its boundary is also formed
by cells with labels in $\Ideal_{[0]}$.
\end{remark}

\begin{remark}
\label{remark:abc}
Notice that in \cite{Goulart-Saldanha-cw}
we consider the set $Y = S_{n+1} \smallsetminus S_{\PA}$
before introducing the larger set $\tilde Y$.
This construction is under some points of view more natural
and would give us, instead of $M$, the larger set
\[ M_{[ac]} = S_{\PA} \smallsetminus \{e\} =
M \cup \{ac,abc\} = \{aba,bacb,bcb,ac,abc,cba,\eta\}. \]
Let $M_{[ac]}^\ast \subset \Word_3$ be the set of words
using only letters in $M_{[ac]}$.
Let $\cM_{[ac]} \supset \cM$ be the set of locally convex curves
with itineraries in $M_{[ac]}^\ast$.
The subset $\cM_{[ac]} \subset \cL_3$
shares several interesting properties with $\cM$,
with certain significant adaptations being necessary.
For instance,
$\cM_{[ac],0,1} = \cL_3[[abc]] \cup \cL_3[[ac]] \cup \cL_3[[cba]]$
is a closed subset of $\cL_3$ and a connected component of $\cM_{[ac]}$.
The subset $\cM_{[ac],0,1} \subset \cL_3$ is not,
however, a topological submanifold:
it is a submanifold with boundary
(the boundary being $\cL_3[[abc]] \cup \cL_3[[cba]]$).
% and has codimension $1$.

On the other hand, the set
\[ M_{[abc]} =
(M \smallsetminus \{cba\}) \cup \{abc\} =
\{aba,bacb,bcb,abc,\eta\} \]
is an alternative to $M$.
In this case most results and proofs
apply with minor changes.
For instance, let $\cM_{[abc]} \subset \cL_3$
be the set of locally convex curves with itineraries in $M_{[abc]}^\ast$.
The stratum $\cL_3[[abc]] = \cM_{[abc],0,1} \subset \cL_3$
is a connected component of $\cM_{[abc]}$,
a closed subset of $\cL_3$
and a submanifold of codimension $2$.
The other connected components of $\cM_{[abc]}$
are likewise closed subsets of $\cL_3$
and topological submanifolds of even codimension.
\end{remark}

\bigbreak

\section{$bacb$}
\label{section:bacb}

The case $[bacb]$ has higher dimension.
Still, a list of all words $w \in \Word_3$
such that $w \sqsubseteq [bacb]$ can be worked out.
In this case there are, besides $w = [bacb]$,
two such words in $M^\ast$:
these are $[aba]$ and $[bcb]$;
other important words $w$ satisfying $w \sqsubseteq [bacb]$
are $bc[aba]cb$ and $ba[bcb]ab$.
Figure \ref{fig:Xbacb} sketches a neighborhood of 
$\cM$ near a curve $\Gamma_0$ of itinerary $[bacb]$.
A more careful discussion of this example is in order.

\begin{figure}[ht]
\def\svgwidth{50mm}
\centerline{%% Creator: Inkscape 1.1.2 (0a00cf5339, 2022-02-04), www.inkscape.org
%% PDF/EPS/PS + LaTeX output extension by Johan Engelen, 2010
%% Accompanies image file '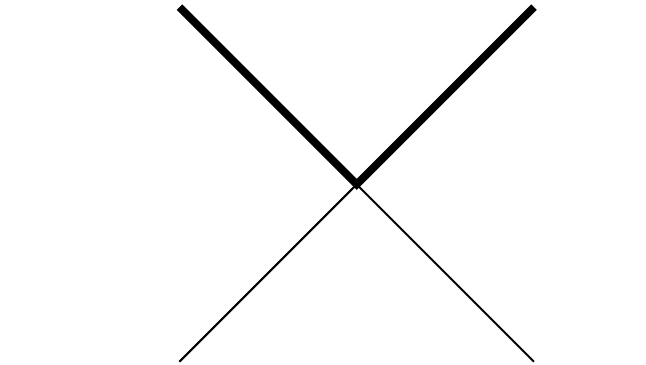' (pdf, eps, ps)
%%
%% To include the image in your LaTeX document, write
%%   \input{<filename>.pdf_tex}
%%  instead of
%%   \includegraphics{<filename>.pdf}
%% To scale the image, write
%%   \def\svgwidth{<desired width>}
%%   \input{<filename>.pdf_tex}
%%  instead of
%%   \includegraphics[width=<desired width>]{<filename>.pdf}
%%
%% Images with a different path to the parent latex file can
%% be accessed with the `import' package (which may need to be
%% installed) using
%%   \usepackage{import}
%% in the preamble, and then including the image with
%%   \import{<path to file>}{<filename>.pdf_tex}
%% Alternatively, one can specify
%%   \graphicspath{{<path to file>/}}
%% 
%% For more information, please see info/svg-inkscape on CTAN:
%%   http://tug.ctan.org/tex-archive/info/svg-inkscape
%%
\begingroup%
  \makeatletter%
  \providecommand\color[2][]{%
    \errmessage{(Inkscape) Color is used for the text in Inkscape, but the package 'color.sty' is not loaded}%
    \renewcommand\color[2][]{}%
  }%
  \providecommand\transparent[1]{%
    \errmessage{(Inkscape) Transparency is used (non-zero) for the text in Inkscape, but the package 'transparent.sty' is not loaded}%
    \renewcommand\transparent[1]{}%
  }%
  \providecommand\rotatebox[2]{#2}%
  \newcommand*\fsize{\dimexpr\f@size pt\relax}%
  \newcommand*\lineheight[1]{\fontsize{\fsize}{#1\fsize}\selectfont}%
  \ifx\svgwidth\undefined%
    \setlength{\unitlength}{310bp}%
    \ifx\svgscale\undefined%
      \relax%
    \else%
      \setlength{\unitlength}{\unitlength * \real{\svgscale}}%
    \fi%
  \else%
    \setlength{\unitlength}{\svgwidth}%
  \fi%
  \global\let\svgwidth\undefined%
  \global\let\svgscale\undefined%
  \makeatother%
  \begin{picture}(1,0.56451613)%
    \lineheight{1}%
    \setlength\tabcolsep{0pt}%
    \put(0,0){\includegraphics[width=\unitlength,page=1]{X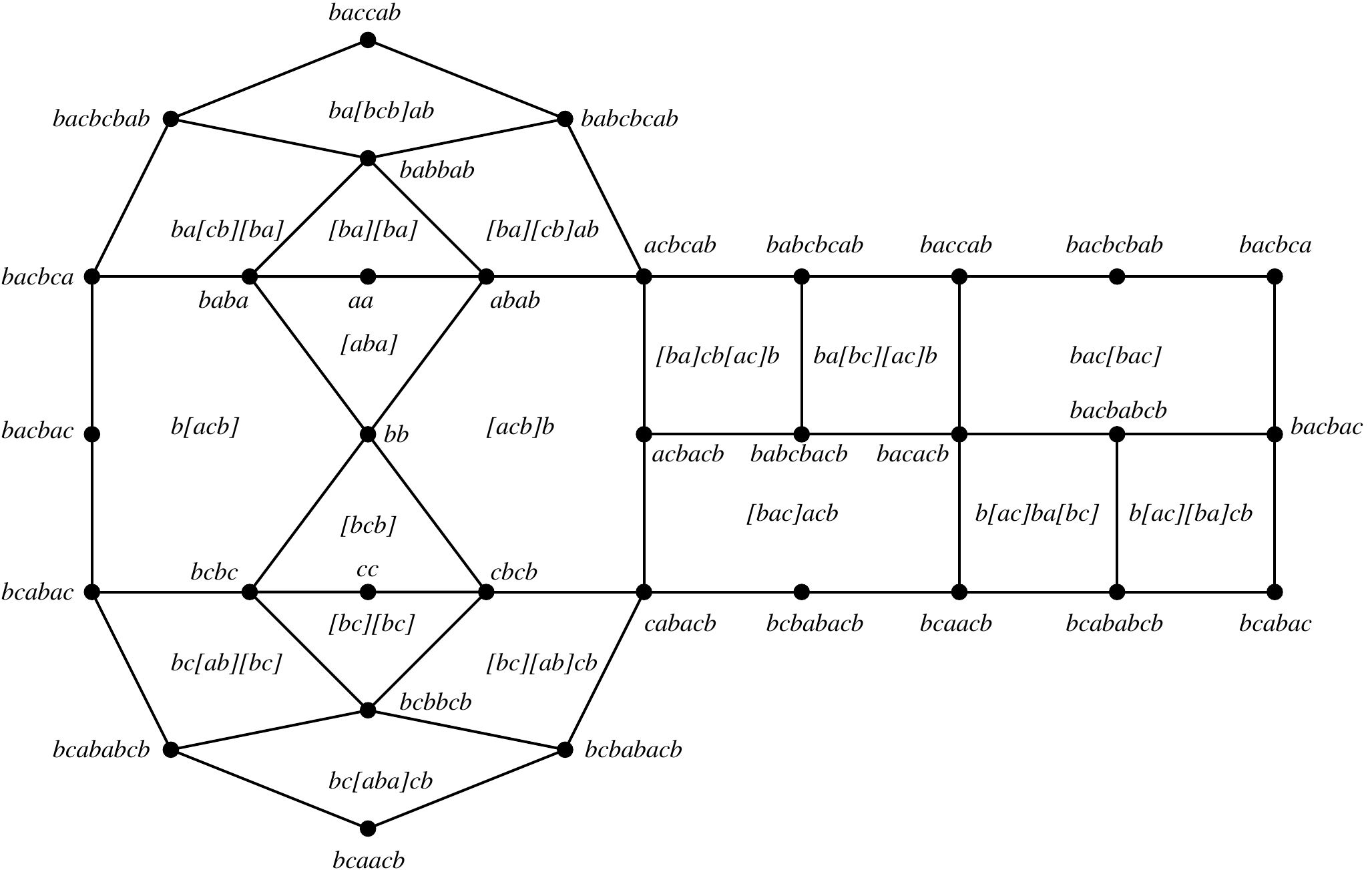}}%
    \put(0.18632346,0.37051455){\color[rgb]{0,0,0}\makebox(0,0)[lt]{\lineheight{1.25}\smash{\begin{tabular}[t]{l}$[aba]$\end{tabular}}}}%
    \put(0.73538452,0.37051455){\color[rgb]{0,0,0}\makebox(0,0)[lt]{\lineheight{1.25}\smash{\begin{tabular}[t]{l}$[bcb]$\end{tabular}}}}%
    \put(0.0033031,0.09598402){\color[rgb]{0,0,0}\makebox(0,0)[lt]{\lineheight{1.25}\smash{\begin{tabular}[t]{l}$ba[bcb]ab$\end{tabular}}}}%
    \put(0.62557231,0.26070234){\color[rgb]{0,0,0}\makebox(0,0)[lt]{\lineheight{1.25}\smash{\begin{tabular}[t]{l}$[bacb]$\end{tabular}}}}%
    \put(0.79029063,0.09598402){\color[rgb]{0,0,0}\makebox(0,0)[lt]{\lineheight{1.25}\smash{\begin{tabular}[t]{l}$bc[aba]cb$\end{tabular}}}}%
  \end{picture}%
\endgroup%
}
\caption{A transversal section to $\cL_3[[bacb]]$.
Since $\cL_3[[bacb]] \subset \cL_3$ has codimension $3$,
the X figure should be seen in a space of dimension $3$.
The thicker lines show $\cL_3[[aba]]$ and $\cL_3[[bcb]]$,
which are also contained in $\cM$.
The thinner lines are smooth continuations,
but are not in $\cM$.}
\label{fig:Xbacb}
\end{figure}

As in Example~\ref{example:z0z1z2} and the other examples
in Section~\ref{section:positivity},
let $\sigma_0 = bacb$, $\sigma_1 = aba$ and $\sigma_2 = bcb$.
Define $\rho_i = \eta \sigma_i$
so that $\rho_0 = ac$, $\rho_1 = abc$ and $\rho_2 = cba$.
Take $q_0 = -1 \in \Quat_4$.

Let $\cL_3[[bacb]] \subset \cL_3$
be the stratum of curves with itinerary $[bacb]$.
We follow the method discussed in Section 7
of \cite{Goulart-Saldanha1}
to study a transversal section to $\cL_3[[bacb]]$.
Such a family of curves is parametrized by
$(x_1,x_2,x_3)$  in a neighborhood of $0 \in \RR^3$ by
\[ M = \begin{pmatrix} t & 1 & 0 & 0 \\ 1 & 0 & 0 & 0 \\
\frac{t^3}{6} + x_2 t + x_1 & \frac{t^2}{2} + x_2 & t & 1 \\
\frac{t^2}{2} + x_3 & t & 1 & 0 \end{pmatrix} \]
so that $t \mapsto M$ is a family (parametrized by $(x_1,x_2,x_3)$)
of convex curves in $P_{bacb} \Lo_{4}^{1}$
($P_{bacb} \in \B_4^{+}$ is a permutation matrix).
For a family $(\Gamma_{(x_1,x_2,x_3)})$ of convex curves in the spin group,
apply Gram-Schmidt to $M$ to obtain the orthogonal matrix
$\Pi(\Gamma_{(x_1,x_2,x_3)}(t)) \in \SO_4$.
In order to study the itinerary of $\Gamma_{(x_1,x_2,x_3)}$
(as a function of $(x_1,x_2,x_3)$),
let $m_a, m_b, m_c$ be the determinants of
the lower left minors of orders $1,2,3$, respectively:
\[ m_a = \frac{t^2}{2} + x_3, \qquad
m_b = -\frac{t^4}{12} + \frac{(x_2-x_3)t^2}{2} + x_1t - x_2x_3, \qquad
m_c = -\frac{t^2}{2} + x_2. \]
The singular set of $\Gamma_{(x_1,x_2,x_3)}$
is the set of roots of the product $m_am_bm_c$.
In order to investigate multiple roots, set
\begin{gather*}
q_{[ab]} = x_3, \qquad q_{[ac]} = (x_2+x_3)^2, \qquad q_{[cb]} = x_2, \\
q_{[ba]} = 18 x_2^2 x_3 - 12 x_2 x_3^3 + 2 x_3 + 9 x_1^2, \qquad
q_{[bc]} = 18 x_2 x_3^2 - 12 x_2^2 x_3 + 2 x_2 + 9 x_1^2
\end{gather*}
so that we have (by explicit computation)
\begin{gather*}
\resultant(m_a,m_b,t) = \frac{1}{72}\;q_{[ab]} q_{[ba]}, \quad
\resultant(m_b,m_c,t) = \frac{1}{72}\;q_{[bc]} q_{[cb]}, \\
\discriminant(m_a,t) = -2\,q_{[ab]}, \quad
\discriminant(m_c,t) = 2\,q_{[cb]}, \\
\resultant(m_a,m_c,t) = \frac{1}{4}\;q_{[ac]}, \quad
\discriminant(m_b,t) = \frac{1}{432}\;q_{[ba]} q_{[bc]}.
\end{gather*}
These polynomials are constructed so that, for instance,
the curve $\Gamma_{(x_1,x_2,x_3)}$
has a letter $[ab]$ in its itinerary if and only if $q_{[ab]} = 0$.
Consider the unit sphere $\Ss^2 \subset \RR^3$
in coordinates $(x_1,x_2,x_3)$.
The polynomials $q_\ast$ define curves in $\Ss^2$;
in the complementary open regions the itinerary of $\Gamma$
has dimension $0$
(i.e., the itinerary
has only the basic letters $a$, $b$ and $c$):
see Figure~\ref{fig:bacb-2}.
The point at the left where
the vertical black line (corresponding to $q_{[ab]}$)
is tangent to the red curve (corresponding to $q_{[ba]}$)
marks the point where the itinerary is $[aba]$.
The antipodal point,
where the ``other'' black line is also tangent to the red curve
corresponds to itinerary $bc[aba]cb$.
Similarly, the point at the right where
the vertical magenta line (corresponding to $q_{[cb]}$)
is tangent to the blue curve (corresponding to $q_{[bc]}$)
marks the point where the itinerary is $[bcb]$;
the antipodal point corresponds to itinerary $ba[bcb]ab$.

\begin{figure}[ht]
\def\svgwidth{\linewidth}
\centerline{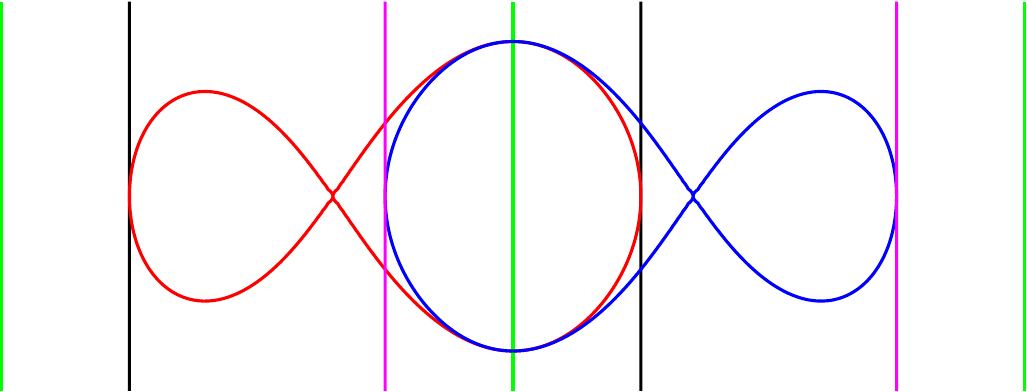}
\caption{A transversal section to $\cL_3[[bacb]]$.
The $2$-sphere is drawn like the Earth in a Mercator map.
The north and south poles are $\pm e_1$,
with $-e_1$ above and $e_1$ below.
The equator is a horizontal line in the middle
(not drawn) and corresponds to $x_1 = 0$,
with the points $e_2$, $-e_3$, $-e_2$ and $e_3$
reading from left to right.
Zero sets of $q_{[ab]}$, $q_{[ba]}$, $q_{[ac]}$, $q_{[bc]}$ and $q_{[cb]}$
are shown in black, red, green,  blue and magenta, respectively.
% There are points $[acb]b$ in $e_1$ and $b[acb]$ in $-e_1$, not shown.
The polynomial $q_{[ac]}$ is a square
and the green lines are actually double.
The one at the center has the effect of swapping two pairs
$ac \leftrightarrow ca$;
the one appearing twice at both ends of the figure
detects the coincidence of complex roots and therefore swaps nothing
in terms of itineraries or real roots.
This and similar examples are joint work with Boris Shapiro.
The figure was produced with the help of the Maple software.}
\label{fig:bacb-2}
\end{figure}

This transversal section is in fact not generic.
This is the same phenomenon discussed at length
for $[acb]$ in Section 9 of \cite{Goulart-Saldanha1}.
A small perturbation obtains a valid boundary for $c_{[bacb]}$,
as in Figure \ref{fig:bacb};
there are other valid boundaries.

\begin{figure}[ht]
\centerline{\includegraphics[width =14cm]{}}
\caption{A valid boundary for $[bacb]$.}
\label{fig:bacb}
\end{figure}

\begin{lemma}
\label{lemma:cM10}
Let $\sigma_0 = bacb$, $\rho_0 = \eta\sigma_0 = ac$
and $z_0 \in \acute\rho_0 \Quat_4 \subset \widetilde\B_4^{+}$.
Consider a short convex curve
$\Gamma_0: [-\epsilon_0,\epsilon_0] \to \cU_{z_0}$
with itinerary $[bacb] = \sigma_0$.
Let $\cM_\ast$ be the set of short convex curves
$\Gamma:  [-\epsilon_0,\epsilon_0] \to \cU_{z_0}$
with one of the itineraries: $[bacb]$, $[aba]$ or $[bcb]$.
Then $\cM_{\ast}$ is a topological submanifold of codimension $2$
in a neighborhood of $\Gamma_0$ in $\cL_3(\cdot,\cdot)$.
\end{lemma}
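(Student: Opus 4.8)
The plan is to reduce the statement to a finite-dimensional assertion about the transversal section displayed above, following Sections~6 and~7 of \cite{Goulart-Saldanha1}, and then to identify $\cM_\ast$ near $\Gamma_0$ by a root analysis. Since the itinerary $[bacb]$ is a single letter, $\sing(\Gamma_0)$ consists of a single moment; as recalled in Remark~\ref{remark:neighborhood} (and Lemma~6.7 of \cite{Goulart-Saldanha1}), there is then a neighborhood $U$ of $\Gamma_0$ in $\cL_3(\cdot,\cdot)$ and a homeomorphism $U \to V \times H$, where $H$ is a separable infinite-dimensional Hilbert space and $V \subset \RR^3$ is a neighborhood of $0$ parametrizing the transversal section $(x_1,x_2,x_3) \mapsto \Gamma_{(x_1,x_2,x_3)}$; here $V$ is three-dimensional since $\dim([bacb]) = \inv(bacb) - 1 = 3$, and $\cL_3[[bacb]] \cap U$ corresponds to $\{0\} \times H$. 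This homeomorphism respects the itinerary stratification, so $\cM_\ast \cap U$ corresponds to $(\cM_\ast \cap V) \times H$. It therefore suffices to prove that $\cM_\ast \cap V$ is, near $0$, a one-dimensional topological submanifold of $V$ with $0$ an interior point: then $\cM_\ast \cap U$ is homeomorphic to a locally flat copy of $(\text{an arc}) \times H$ in $V \times H$, hence a topological submanifold of $U$ of codimension $3 - 1 = 2$, and the local conicity of the picture provides a collar.

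Next I would pin down $\cM_\ast \cap V$ as a set. By definition it equals $(\cL_3[[bacb]] \cap V) \cup (\cL_3[[aba]] \cap V) \cup (\cL_3[[bcb]] \cap V)$, the first term being $\{0\}$. To handle the others I would use the dictionary of \cite{Goulart-Saldanha1} relating itineraries to coincidences among the real roots of the minors $m_a, m_b, m_c$, equivalently to the vanishing and signs of $q_{[ab]}, q_{[ba]}, q_{[ac]}, q_{[bc]}, q_{[cb]}$: the stratum $\cL_3[[aba]] \cap V$ is the locus where a root of $m_a$ collides with a root of $m_b$ with exactly the multiplicities yielding the single letter $aba$ (whose cell is $\Bru_{\rho_1}$, $\rho_1 = abc$) and no further singular moment, and symmetrically $\cL_3[[bcb]] \cap V$ with the roles of $m_a$ and $m_c$ exchanged. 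The combinatorics of $\sqsubseteq$ — among the words $w \sqsubseteq [bacb]$ only $[aba]$ and $[bcb]$ lie in $M^\ast$, every other such $w$ carrying a letter of dimension $0$ — shows that $[bacb]$, $[aba]$, $[bcb]$ are the only itineraries of $\cM$ that occur in $U$, and, via Equation~(10) of \cite{Goulart-Saldanha-cw}, that the closure in $V$ of each of $\cL_3[[aba]] \cap V$ and $\cL_3[[bcb]] \cap V$ adds only the point $0$. Since these two sets are semialgebraic (their defining conditions are polynomial in $(x_1,x_2,x_3)$), the whole picture is locally conical at $0$; by the root analysis — reading it off Figure~\ref{fig:bacb-2}, where $\cL_3[[aba]]$ and $\cL_3[[bcb]]$ meet the link $\Ss^2$ each in a single tangency point, the antipodal tangencies carrying the dimension-$0$ letters $b$, $c$ and hence not belonging to $\cM_\ast$ — the link of $0$ in $\cM_\ast \cap V$ consists of exactly two points, one on each stratum.

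It remains to assemble the local model. The two sets $\cL_3[[aba]] \cap V$ and $\cL_3[[bcb]] \cap V$ are disjoint open arcs, each with $0$ as its unique limit point in $V$, and the link of $0$ in $\cM_\ast \cap V$ is two points; by local conicity, $\cM_\ast \cap V$ is, in a neighborhood of $0$, the cone on a two-point set, i.e.\ a topological arc with $0$ an interior point. Combined with the first paragraph this proves the lemma.

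The step I expect to be the main obstacle is the root analysis of the second paragraph. As the text points out, the section of Figure~\ref{fig:bacb-2} is not generic — $q_{[ac]}$ is a perfect square and the pairs $\{q_{[ab]} = 0\}, \{q_{[ba]} = 0\}$ and $\{q_{[cb]} = 0\}, \{q_{[bc]} = 0\}$ are tangent rather than transverse — so extracting the itineraries from sign patterns of the $q_\ast$ requires the careful treatment of coincident roots carried out for the analogously degenerate section $[acb]$ in Section~9 of \cite{Goulart-Saldanha1}, now adapted to these five polynomials. In particular one must verify that the degeneracy does not make $\cL_3[[aba]] \cap V$ or $\cL_3[[bcb]] \cap V$ branch, or produce extra components of $\cM_\ast \cap V$ accumulating at $0$, either of which would break the manifold conclusion.
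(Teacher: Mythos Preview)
Your approach is correct but the paper's is different and more direct. Both reduce to a three-dimensional transversal slice. You then analyze that slice via the polynomial section of Figure~\ref{fig:bacb-2}: identify the link of $0$ in $\cM_\ast \cap V$ as two points (the two tangencies), invoke semialgebraic local conicity, and conclude that $\cM_\ast \cap V$ is an arc through $0$. The paper instead uses the matrix-entry coordinates $(x_1,x_2,x_3,x_4)$ on $z_0\Lo_4^1$ directly: for $\Gamma$ near $\Gamma_0$ it selects the moment $t_0(\Gamma)$ at which the $x_4$-entry of $\bQ^{-1}(\Gamma(t_0))$ vanishes and sets $x(\Gamma) = (x_1,x_2,x_3)$ at that moment, obtaining a topological submersion $x: U \to \RR^3$. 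Since the cells $\Bru_{z_0}, \Bru_{z_1}, \Bru_{z_2}$ are cut out (inside $x_4=0$) by $\{x_1=x_2=x_3=0\}$, $\{x_1=x_3=0,\,x_2>0\}$, $\{x_1=x_2=0,\,x_3>0\}$ respectively, the set $\cM_\ast$ is the $x$-preimage of two half-lines meeting at the origin. The paper then writes down the explicit topological submersion
\[
v(x_1,x_2,x_3) = \bigl(x_1,\; x_2+x_3-|x_2-x_3|\bigr)
\]
from $\RR^3$ to $\RR^2$, whose zero locus is precisely that union of half-lines; hence $\cM_\ast = (v\circ x)^{-1}(0)$ with $v\circ x$ a topological submersion, giving codimension $2$ and the collar in one stroke.

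This route sidesteps exactly the obstacle you flag at the end: membership in $\cM_\ast$ is read off from which Bruhat cell $\Gamma(t_0)$ lies in, not from root-coincidence conditions on $m_a,m_b,m_c$, so the non-genericity of the polynomial section never enters. Your argument does go through --- under the weighted scaling $t\mapsto\lambda t$, $x_1\mapsto\lambda^3 x_1$, $x_2,x_3\mapsto\lambda^2 x_2,\lambda^2 x_3$ the minors rescale homogeneously, so the itinerary stratification is genuinely conical and Figure~\ref{fig:bacb-2} is the correct link at every radius --- but the explicit submersion is shorter and avoids that detour.
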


\begin{coro}
\label{coro:cM10}
The set 
\[ \cM_{1,0} = \cL_3[[aba]] \cup \cL_3[[bacb]] \cup \cL_3[[bcb]] \]
is a closed subset of $\cL_3$ and a topological submanifold
of codimension $2$.
\end{coro}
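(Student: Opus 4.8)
The plan is to derive Corollary~\ref{coro:cM10} from Lemma~\ref{lemma:cM10} together with the local-to-global machinery of Remark~\ref{remark:neighborhood} and the combinatorics of $\sqsubseteq$ already established in Section~\ref{section:closed}. First I would dispose of closedness: the set $\cM_{1,0}$ is exactly $\cM_{\mu_0,\mu_1}$ with $(\mu_0,\mu_1)=(1,0)$, since the only words $w\in M^\ast$ with $\mu(w)=(1,0)$ are $[aba]$, $[bcb]$ and $[bacb]$ (any word using two or more nontrivial letters of $M$ would have $\mu$-value with a coordinate at least $2$ in one slot or a nonzero second slot). Hence $\cM_{1,0}=\cL_3[[aba]]\cup\cL_3[[bacb]]\cup\cL_3[[bcb]]$ is closed by Proposition~\ref{prop:closed}.

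Next I would prove the submanifold statement by a patching argument over strata. Let $\Gamma_0\in\cM_{1,0}$, so $\iti(\Gamma_0)$ is a word in $\{[aba],[bcb],[bacb]\}^\ast$ with $\mu(\Gamma_0)=(1,0)$; thus exactly one singular time $t_k$ of $\Gamma_0$ carries a letter from $M$ contributing to $\mu$, and all other singular times carry letters $\sigma$ with $w\sqsubseteq\sigma$ forcing dimension-$0$ letters — but in fact for $\Gamma_0\in\cM_{1,0}$ every letter of its itinerary lies in $M$, and the single letter with positive $\mu$-contribution is one of $[aba]$, $[bcb]$, $[bacb]$, while there are no other letters at all (any further letter of $M$ would push $\mu$ out of $(1,0)$, and $[cba]$ has $\mu=(0,1)$). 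So $\iti(\Gamma_0)$ is a one-letter word $\sigma\in\{[aba],[bcb],[bacb]\}$. By Remark~\ref{remark:neighborhood}, a neighborhood $U$ of $\Gamma_0$ in $\cL_3$ is homeomorphic to $U_k\times H$, where $U_k$ is a neighborhood of the restriction $\Gamma_0|_{J_k}$ to the interval around the single singular time, and $H$ is a Hilbert space; moreover under this homeomorphism, membership in $\cM$ (and the itinerary) is detected entirely inside $U_k$, since away from $J_k$ all nearby curves have empty itinerary. For $\sigma\in\{[aba],[bcb]\}$, the discussion in Section~\ref{section:closed} (reading off the boundary of $c_{[aba]}$ resp.\ $c_{[bcb]}$ in Figure~\ref{fig:ababcb}) shows that the only word $w$ with $w\sqsubseteq\sigma$ and $w\in M^\ast$ is $\sigma$ itself, and that near $\Gamma_0$ the set $\cM$ coincides with the smooth codimension-$2$ submanifold $\cL_3[[\sigma]]$. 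For $\sigma=[bacb]$, this is precisely the content of Lemma~\ref{lemma:cM10}: in a neighborhood of $\Gamma_0|_{J_k}$ the set $\cM_\ast$ of short convex curves with itinerary in $\{[bacb],[aba],[bcb]\}$ is a topological submanifold of codimension~$2$, and crossing with $H$ (and with the contractible factors coming from the convex restrictions on the $\tilde J_k$) preserves this. In all cases the local model of $\cM_{1,0}$ near $\Gamma_0$ is a codimension-$2$ topological submanifold of $\cL_3(\cdot,\cdot)$, which pulls back to a codimension-$2$ topological submanifold of $\cL_3(1;z_1)$, where $z_1=(-1)^{2}(\hat a\hat c)^{1}=\hat a\hat c$ by the $\mu$-to-endpoint correspondence of Proposition~\ref{prop:M}.

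Finally I would remark on consistency of the ambient space: every $\Gamma\in\cM_{1,0}$ has $\hat\Gamma$ determined by $\widehat{\iti(\Gamma)}$, which for $\mu(\Gamma)=(1,0)$ equals $\hat a\hat c\cdot(\text{central})$; tracking signs via $\adv$ and $\chop$ as in the $[bacb]$ computation above (where $\chop(z_i)=\acute\eta$, $\adv(z_i)=-\acute\eta$) shows all three strata land in the same component $\cL_3(1;\hat a\hat c)$, so $\cM_{1,0}$ is a submanifold of a single connected space of curves and the codimension statement is unambiguous.

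The main obstacle is Lemma~\ref{lemma:cM10} itself — establishing that near a $[bacb]$-curve the three strata $\cL_3[[bacb]]$, $\cL_3[[aba]]$, $\cL_3[[bcb]]$ fit together into an honest topological (not merely stratified) submanifold of codimension~$2$. The transversal section computed above is non-generic (the zero set of $q_{[ac]}$ is a double green curve), so one cannot simply quote transversality; one must argue that a suitable perturbation, as in Figure~\ref{fig:bacb} and by analogy with the $[acb]$ analysis in Section~9 of \cite{Goulart-Saldanha1}, yields a local homeomorphism onto a model in which $\cM_\ast$ is linear of codimension~$2$. Given Lemma~\ref{lemma:cM10}, the corollary is a routine assembly, and the steps above are the ones I would write out.
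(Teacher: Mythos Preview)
Your proposal is correct and follows essentially the same approach as the paper: reduce to the local picture via Remark~\ref{remark:neighborhood}, invoke Lemma~\ref{lemma:cM10} at $[bacb]$-points and the Section~\ref{section:closed} analysis at $[aba]$- and $[bcb]$-points, and cite Proposition~\ref{prop:closed} for closedness. The paper's own proof is a single sentence to this effect (extend the short convex arc of Lemma~\ref{lemma:cM10} by convex arcs on either side, as in Remark~\ref{remark:neighborhood}); your write-up is more detailed but the content is the same.
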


\begin{proof}
The difference between the statement in the corollary
and a case of the statement in the lemma
is that in the corollary two extra convex arcs
are drawn from $1$ to $\Gamma(-\epsilon_0)$
and from $\Gamma(\epsilon_0)$ to $-1$.
Such arcs can be chosen so as not to influence the statement,
as discussed in Remark \ref{remark:neighborhood}
and in \cite{Goulart-Saldanha1}.
\end{proof}

%  We saw in Lemma \ref{lemma:sqsubseteq}
%  that the only words $w \in \Word_3$
%  for which
%  $w \sqsubseteq [aba]$,
%  $w \sqsubseteq [bcb]$ or
%  $w \sqsubseteq [bacb]$
%  are $w = [aba]$, $w = [bcb]$ and $w = [bacb]$.
%  Furthermore, by definition,
%  if $\sigma \in S_4 \smallsetminus \{e\}$
%  and $\sigma \sqsubseteq w$ then 
%  $w$ is a word of length $1$, i.e., $w = \sigma_1$
%  for some $\sigma_1 \in S_4 \smallsetminus \{e\}$
%  with $\hat\sigma = \hat\sigma_1$.
%  The permutations $\sigma_1$
%  with $\hat\sigma_1 = -1$ are precisely $[aba]$, $[bcb]$ and $[bacb]$.
%  Thus, the only words $w \in \Word_3$
%  for which
%  $[aba] \sqsubseteq w$,
%  $[bcb] \sqsubseteq w$ or
%  $[bacb] \sqsubseteq w$
%  are $w = [aba]$, $w = [bcb]$ and $w = [bacb]$.
%  This implies that $\cM_{1,0} \subset \cL_3$ is a closed subset.

\begin{proof}[Proof of Lemma \ref{lemma:cM10}]
This proof is based on the examples discussed in
Section~\ref{section:positivity}.
We may assume without loss of generality that $z_0 = -\acute a\acute c$
(as in the examples).
As discussed above in the proof of Corollary \ref{coro:cM10},
it is equivalent to discuss a neighborhood of a short convex curve
$\Gamma: [-\epsilon_0,\epsilon_0] \to \cU_{z_0}$
with $\Gamma(0) = z_0$
or of a curve $\tilde\Gamma \in \cL_3[[bacb]]$.
In the proof we switch between the two points of view.

We saw in \cite{Goulart-Saldanha1}
(and in Section \ref{section:closed})
that $\cL_3[[aba]] \subset \cL_3$ and $\cL_3[[bcb]] \subset \cL_3$
are submanifolds of codimension $2$.
We are left with proving that if $\Gamma \in \cL_3[[bacb]]$
then $\cM_{\ast} \subset \cL_3(\cdot,\cdot)$ is
a topological submanifold of codimension $2$
in a neighborhood of $\Gamma$.
We prefer to argue in the situation of Corollary \ref{coro:cM10}.

Consider a curve $\Gamma_0 \in \cL_3[[bacb]]$
with $\Gamma_0(t_0) \in \Bru_{z_0}$.
After a projective transformation, 
we may assume that $\Gamma_0(t_0) = z_0$.
For nearby curves $\Gamma$,
there is a continuously defined $t_0(\Gamma)$
for which the $x_4$ coordinate of
$M(\Gamma) = \bQ^{-1}(\Gamma(t_0(\Gamma))) \in z_0 \Lo_4^1$
is equal to $0$.
Let $x(\Gamma) = (x_1(\Gamma), x_2(\Gamma), x_3(\Gamma))$
be the respective coordinates of $M(\Gamma)$.
In a neighborhood $U$ of $\Gamma_0$,
the map $x: U \to \RR^3$ is a topological submersion.
The submanifold $\cL_3[[bacb]]$ is defined by $x(\cdot) = 0$.
Let $v: \RR^3 \to \RR^2$ 
\[ v(x_1,x_2,x_3) = \left(x_1,x_2+x_3-|x_2-x_3|\right). \]
Notice that $v(x_1,x_2,x_3) = 0$ precisely 
at the origin and on
the half lines $(x_1=x_2=0,x_3>0)$ and $(x_1=x_3=0,x_2>0)$.
Thus, $v \circ x: U \to \RR^2$ is a topological submersion,
and $(v\circ x)(\Gamma) = 0$ if and only if $\Gamma \in \cM_{1,0}$.
\end{proof}

\bigbreak

\section{$\eta$, $[aba][cba]$, $[cba][aba]$}
\label{section:eta}

We have $\eta = abacba$ and
\[ [aba][cba] \sqsubseteq \eta, \qquad [cba][aba] \sqsubseteq \eta; \]
furthermore, together with $\eta$ itself,
these are the only words $w \in M^\ast$ with $w \sqsubseteq \eta$.
The subset $\cM_{1,1} \subset \cL_3$ equals
\[ \cM_{1,1} =
\cL_3[[aba][cba]] \sqcup \cL_3[\eta] \sqcup \cL_3[[cba][aba]] \subset \cL_3. \]
The subsets
\[ \cL_3[\eta], \cL_3[[aba][cba]], \cL_3[[cba][aba]] \subset \cL_3 \]
are contractible submanifolds of codimensions $5$, $4$ and $4$,
respectively.
The aim of this section is to prove that they glue nicely.

\begin{lemma}
\label{lemma:eta}
Let $\sigma_0 = \eta$, $\rho_0 = \eta\sigma_0 = e$
and $z_0 \in \Quat_4 \subset \widetilde\B_4^{+}$.
Consider a short convex curve
$\Gamma_0: [-\epsilon_0,\epsilon_0] \to \cU_{z_0}$
with itinerary $\eta = \sigma_0$.
Let $\cM_\ast$ be the set of short convex curves
$\Gamma:  [-\epsilon_0,\epsilon_0] \to \cU_{z_0}$
with one of the itineraries: $\eta$, $[aba][cba]$ or $[cba][aba]$.
Then $\cM_{\ast}$ is a submanifold of codimension $4$
in a neighborhood of $\Gamma_0$ in $\cL_3(\cdot,\cdot)$.
\end{lemma}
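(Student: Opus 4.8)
The plan is to follow the scheme of the proof of Lemma~\ref{lemma:cM10}, the extra work coming from the fact that $\eta$ is the top permutation, so that $\inv(\eta)=6$ and the stratum $\cL_3[\eta]$ has codimension $5$. As in the passage from Lemma~\ref{lemma:cM10} to Corollary~\ref{coro:cM10}, and as explained in Remark~\ref{remark:neighborhood}, the statement is equivalent to understanding a neighborhood of a short convex curve $\Gamma_0\colon[-\epsilon_0,\epsilon_0]\to\cU_{z_0}$ with $\Gamma_0(0)=z_0\in\Quat_4$ and itinerary $\eta$; for this, by the transversal-section machinery of Section~7 of \cite{Goulart-Saldanha1}, it suffices to analyse an explicit finite-dimensional slice transversal to $\cL_3[\eta]$. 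So first I would write down this slice: a $5$-parameter family of locally convex curves $\Gamma_x$, $x=(x_1,\dots,x_5)$ near $0\in\RR^5$, encoded by a $4\times4$ matrix $M(t;x)$ built as in Section~\ref{section:bacb} --- a standard convex arc through the most singular cell $\Bru_{z_0}$, perturbed by the parameters $x$ ---, after which Gram--Schmidt yields $\Pi(\Gamma_x(t))\in\SO_4$. One may take $z_0$ to be whatever element of $\Quat_4$ makes $M$ cleanest. In this slice, $\cL_3[\eta]$ is the origin $x=0$, while $\cL_3[[aba][cba]]$ and $\cL_3[[cba][aba]]$, both of codimension $4$, are two arcs emanating from $0$.

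Next, exactly as in Section~\ref{section:bacb}, I would compute the lower-left minors $m_a,m_b,m_c$ of $M(t;x)$ of orders $1,2,3$ and, from their pairwise resultants and their discriminants, extract polynomials $q_\ast(x)$ that govern how the itinerary of $\Gamma_x$ degenerates --- the higher-dimensional analogue of the polynomials behind Figure~\ref{fig:bacb-2}. The combinatorial input is already in place: by Equation (10) in \cite{Goulart-Saldanha-cw}, every itinerary occurring in a small enough neighborhood of $\Gamma_0$ is a word $w$ with $w\sqsubseteq\eta$, a finite list that can be worked out by hand; among these, the ones in $M^\ast$ are exactly $\eta$, $[aba][cba]$ and $[cba][aba]$ (in the spirit of Lemma~\ref{lemma:sqsubseteq}), and --- as in Section~\ref{section:closed} and Remark~\ref{remark:abc} --- every other such word is ruled out by the presence of a letter of dimension $0$ or of one of the few remaining non-$M$ letters (such as $[ac]$) that can occur. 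Consequently $\cM_\ast$ meets the slice in the semialgebraic subset, cut out by those $q_\ast$ together with sign conditions, consisting of the origin and the two arcs; pinning down this subset is what the higher-dimensional replacement for Figure~\ref{fig:bacb-2} has to display.

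Finally, as in the proof of Lemma~\ref{lemma:cM10}, I would produce from the $q_\ast$ a continuous map $v\colon\RR^5\to\RR^4$, defined near $0$, which is a topological submersion and satisfies $v^{-1}(0)=\cM_\ast\cap(\text{slice})$; the mere existence of such a $v$ exhibits $\cM_\ast$ as a topological submanifold of codimension $4$ near $\Gamma_0$, and --- transporting this through the slice homeomorphism of \cite{Goulart-Saldanha1} --- the same in $\cL_3(\cdot,\cdot)$, which is the lemma. The construction of $v$ should follow Lemma~\ref{lemma:cM10}: combine the manifestly submersive coordinates with one or two coordinates built by the $\min$-type device there (recall that $(u_1,u_2)\mapsto(\min(u_1,u_2),\,u_1-u_2)$ is a homeomorphism of $\RR^2$, so a coordinate $u_1+u_2-|u_1-u_2|=2\min(u_1,u_2)$ keeps $v$ a topological submersion while forcing $v^{-1}(0)$ to be the union of the two arcs), so that $v^{-1}(0)$ is a topological $1$-manifold through $0$ --- a possibly bent line, exactly like the pair of half-lines in the $[bacb]$ picture.

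The main obstacle is the middle step. Writing down the slice is routine, but determining, region by region in the $5$-dimensional parameter space, which itinerary actually occurs --- i.e.\ producing the higher-dimensional replacement for Figure~\ref{fig:bacb-2} --- is considerably heavier than for $[bacb]$: the $q_\ast$ have higher degree, the picture is no longer planar, and (as already for $[bacb]$ here and for $[acb]$ in \cite{Goulart-Saldanha1}) the natural slice is non-generic, so it has to be perturbed slightly to give a valid boundary for the cell $c_\eta$ before the two arcs can be read off cleanly. Once the stratification of the parameter space is pinned down, constructing $v$ and verifying that it is a topological submersion are routine adaptations of the corresponding step in Lemma~\ref{lemma:cM10}.
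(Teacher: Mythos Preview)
Your plan is plausible in outline but diverges substantially from the paper's proof, and in a way that both makes the computation much heavier and yields a weaker conclusion than what is actually true.

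The paper does \emph{not} repeat the resultant/discriminant machinery of Section~\ref{section:bacb}. Instead it works intrinsically with the $\Lo_4^1$ coordinates $g_{ij}(t)$ and the ODE structure $\Gamma'=\Gamma\Lambda$ of a convex curve (with positive $\beta_1,\beta_2,\beta_3$). After normalizing $g_{21}(0)=0$, the map $\Gamma\mapsto(x_{31},x_{32},x_{41},x_{42},x_{43})$ with $x_{ij}=g_{ij}(0)$ is a submersion, and the paper first cuts down to the codimension-$2$ submanifold $N_2=\{x_{31}=x_{41}=0\}$, characterized as those curves whose itinerary contains a letter $\ge [cba]$. On $N_2$, the monotonicity forced by $\beta_i>0$ lets one solve (via the implicit function theorem) for the second singular time $t_{31}$ as a smooth function of $x_{32}$, and then for values $z_2,z_3$ such that $\phi(\Gamma)=(x_{43}-z_3,\,x_{42}-z_2)$ is a genuine $C^1$ submersion on $N_2$ with $\phi^{-1}(0)=\cM_\ast$. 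Thus $\cM_\ast$ has codimension $2+2=4$, and is in fact a $C^1$ submanifold (see Remark~\ref{remark:etaC1}).

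The point you are missing is that, unlike the $[bacb]$ case, the two arcs $\cL_3[[aba][cba]]$ and $\cL_3[[cba][aba]]$ fit together \emph{smoothly} through $\cL_3[\eta]$: no $\min$-type device is needed, and expecting one leads you to anticipate difficulties that are not there. Your polynomial approach would presumably recover this after enough computation in five variables, but the paper's ODE-based argument sidesteps the whole stratification picture and delivers the smooth structure directly.
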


\begin{coro}
\label{coro:eta}
The subset $\cM_{1,1} \subset \cL_3$ is a closed subset
and a submanifold of codimension $4$.
\end{coro}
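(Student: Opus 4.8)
The plan is to follow the pattern already used for Corollary~\ref{coro:cM10}. Closedness of $\cM_{1,1} \subset \cL_3$ is not a new statement: it is the case $(\mu_0,\mu_1) = (1,1)$ of Proposition~\ref{prop:closed}. So the work is to show that $\cM_{1,1}$ is a topological submanifold of codimension $4$, and this is a purely local assertion. Recall that
\[ \cM_{1,1} = \cL_3[[aba][cba]] \sqcup \cL_3[\eta] \sqcup \cL_3[[cba][aba]], \]
that these three strata are contractible submanifolds of $\cL_3$ of codimensions $4$, $5$ and $4$ respectively, and that $[aba][cba] \sqsubseteq \eta$ and $[cba][aba] \sqsubseteq \eta$ are the only words of $M^*$ lying above $\eta$ in the partial order $\sqsubseteq$. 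I would fix $\Gamma_0 \in \cM_{1,1}$ and split into cases according to which of the three strata contains it.

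\emph{First case: $\Gamma_0 \in \cL_3[[aba][cba]]$} (the stratum $\cL_3[[cba][aba]]$ being identical). Here I claim that $\cM$ agrees with $\cL_3[[aba][cba]]$ throughout a small neighborhood of $\Gamma_0$. Indeed, by the closure relation (Equation~(10) of \cite{Goulart-Saldanha-cw}) a stratum $\cL_3[w]$ accumulates at $\Gamma_0$ only if $w \sqsubseteq [aba][cba]$, and by the definition of $\sqsubseteq$ such a $w$ factors as $w = u\,v$ with $u \sqsubseteq [aba]$ and $v \sqsubseteq [cba]$; the enumerations recorded in Section~\ref{section:closed} (Figures~\ref{fig:ababcb} and~\ref{fig:cbaabc}) then show that the only possibility with $u,v \in M^*$ is $u = [aba]$, $v = [cba]$. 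Hence near $\Gamma_0$ the set $\cM_{1,1}$ coincides with $\cL_3[[aba][cba]]$, a submanifold of codimension $4$, and there is nothing more to do in this case.

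\emph{Second case: $\Gamma_0 \in \cL_3[\eta]$}. This is the corner at which the three strata meet, and it is exactly what Lemma~\ref{lemma:eta} is built to handle. As in the proof of Corollary~\ref{coro:cM10}, the only difference between the situation of the corollary and the hypotheses of Lemma~\ref{lemma:eta} is that in $\cL_3$ one must attach two extra convex arcs, joining $1$ to $\Gamma_0(-\epsilon_0)$ and $\Gamma_0(\epsilon_0)$ to $\Gamma_0(1)$; by Remark~\ref{remark:neighborhood} (and Section~6 of \cite{Goulart-Saldanha1}) these arcs can be chosen so as to leave the itineraries of nearby curves unchanged and to contribute only a product with an infinite-dimensional Hilbert space factor. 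Since, by the discussion preceding Lemma~\ref{lemma:eta}, the only itineraries of curves of $\cM$ near $\Gamma_0$ are $\eta$, $[aba][cba]$ and $[cba][aba]$, the set $\cM_\ast$ of that lemma corresponds to $\cM_{1,1}$ in the neighborhood of $\Gamma_0$, and Lemma~\ref{lemma:eta} tells us it is a topological submanifold of codimension $4$ there.

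Putting the two cases together, $\cM_{1,1}$ is everywhere a topological submanifold of $\cL_3$ of codimension $4$, which with the closedness above proves the corollary. The real difficulty is concentrated in Lemma~\ref{lemma:eta}: one has to construct, transverse to $\cL_3[\eta]$, an explicit finite-dimensional model — playing the role, one corner higher, of the map $v$ in the proof of Lemma~\ref{lemma:cM10} and of the pictures in Figures~\ref{fig:bacb-2} and~\ref{fig:bacb} — in which the two codimension-$4$ strata $\cL_3[[aba][cba]]$ and $\cL_3[[cba][aba]]$ glue onto the codimension-$5$ stratum $\cL_3[\eta]$ to form a manifold, which (as already for $[bacb]$) can only be expected to be a topological one. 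By comparison, the corollary itself amounts to the routine localization of that lemma, with the $\sqsubseteq$-computations serving to exclude any other stratum of $\cM$ in the neighborhoods considered.
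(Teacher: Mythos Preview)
Your proof is correct and follows essentially the same route as the paper: the paper's proof is the single line ``similar to that of Corollary~\ref{coro:cM10}'', and you have simply unpacked that reference, treating the strata $\cL_3[[aba][cba]]$ and $\cL_3[[cba][aba]]$ by the $\sqsubseteq$-enumerations of Section~\ref{section:closed} and the corner stratum $\cL_3[\eta]$ by Lemma~\ref{lemma:eta} together with Remark~\ref{remark:neighborhood}. One small inaccuracy in your final commentary: you write that the glued manifold ``can only be expected to be a topological one'', but in fact (see Remark~\ref{remark:etaC1}) the paper proves $\cM_{1,1}$ is at least $C^1$, unlike the $[bacb]$ case.
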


\begin{proof}
The proof of this corollary is similar to that of Corollary \ref{coro:cM10}.
\end{proof}

\begin{remark}
\label{remark:etaC1}
The careful reader will have noticed that
Lemma \ref{lemma:eta} and Corollary \ref{coro:eta}
claim that $\cM_{1,1}$
is a submanifold, not a topological submanifold.
Indeed, we will prove that it is at least a $C^1$ submanifold.
Moreover, for any $k \in \NN$, $k > 0$,
if we take the space of curves to be $H^r$,
the subset $\cM_{1,1} \subset \cL_3$ is a $C^k$ submanifold
provided we take $r$ to be sufficiently large.
\end{remark}

%  There seems to be no obstacle towards proving
%  stronger results but we shall not try to do so.

\begin{proof}[Proof of Lemma \ref{lemma:eta}]
Set $\sigma_0 = \eta$, $\sigma_1 = aba$ and $\sigma_2 = cba$.
Set $\rho_i = \eta\sigma_i$ so that
$\rho_0 = e$, $\rho_1 = abc$ and $\rho_2 = bcb$.
Consider the open set $\cU_1 \subset \Spin_4$
and the usual bijection $\bQ: \Lo_4^1 \to \cU_1$
which respects Bruhat cells.
We denote the entries of $L \in \Lo_4^1$
or of a convex curve $\Gamma: J \to \Lo_4^1$
(where $J \subseteq \RR$ is an interval) as:
\[ L =
\begin{pmatrix} 
1 & 0 & 0 & 0 \\
l_{21} & 1 & 0 & 0 \\
l_{31} & l_{32} & 1 & 0 \\
l_{41} & l_{42} & l_{43} & 1 
\end{pmatrix},
\qquad
\Gamma(t) =
\begin{pmatrix} 
1 & 0 & 0 & 0 \\
g_{21}(t) & 1 & 0 & 0 \\
g_{31}(t) & g_{32}(t) & 1 & 0 \\
g_{41}(t) & g_{42}(t) & g_{43}(t) & 1 
\end{pmatrix}.
\]
For a convex curve $\Gamma: J \to \Lo_4^1$,
we have positive functions
$\beta_1, \beta_2, \beta_3$ with
\[ \Gamma'(t) = \Gamma(t) \Lambda(t), \qquad
\Lambda(t) =
\begin{pmatrix} 
0 & 0 & 0 & 0 \\
\beta_{1}(t) & 0 & 0 & 0 \\
0 & \beta_{2}(t) & 0 & 0 \\
0 & 0 & \beta_{3}(t) & 0 
\end{pmatrix}.
\]
In particular, we have $g'_{21}(t) = \beta_1(t)$
so that there exists $t_0 \in \RR \supset J$ with
$\sign(g_{21}(t)) = \sign(t-t_0)$.
The value $t_0$ is a sufficiently smooth function of $\Gamma$.
For simplicity, we assume from now on that $t_0 = 0$ is in the interior of $J$.
Let $x_{ij} = g_{ij}(0)$.
The map taking $\Gamma$ to $(x_{31},x_{32},x_{41},x_{42},x_{43})$
is a submersion from an open neighborhood $U \subset \cL_3$
of $\cL_3[\eta]$ to $\RR^5$.
Recall that $\Gamma \in \cL_3[\eta]$ if and only if
$(x_{31} = x_{32} = x_{41} = x_{42} = x_{43} = 0)$,
confirming that $\cL_3[\eta]$ is a submanifold of codimension $5$.

% Zariski
The closure of $\bQ^{-1}[\Bru_{\rho_2}] \subset \Lo_4^1$
is the subgroup of matrices $L$ with
$(l_{21} = l_{31} = l_{41} = 0)$.
More precisely, $L$ belongs to this subgroup
if and only if $L \in \bQ^{-1}[\Bru_\rho]$ for some $\rho \le \rho_2$
(in the strong Bruhat order).
Let $N_2 \subset U$ be 
the submanifold of $U$ of codimension $2$
defined by $(x_{31} = x_{41} = 0)$.
It follows that $\Gamma \in N_2$
if and only if
$\Gamma(0) \in \bQ^{-1}[\Bru_\rho]$ for some $\rho \le \rho_2$.
When we convert this into information about 
the itinerary $\iti(\Gamma)$,
we learn that if $\Gamma \in N_2$
then $\iti(\Gamma)$ contains a letter $\sigma \ge \sigma_2$.
The converse also holds, giving us a characterization of $N_2$.
We restrict our attention to this submanifold.

The function $g_{32}$ is strictly increasing
since it satisfies 
\begin{equation}
\label{equation:g32}
g'_{32}(t) = \beta_2(t) > 0, \qquad g_{32}(0) = x_{32}, \qquad
g_{32}(t) = x_{32} + \int_0^t \beta_2(\tau) d\tau.
\end{equation}
Consider $t_{32} \in J$ such that 
$\sign(g_{32}(t)) = \sign(t-t_{32})$:
clearly, $\sign(t_{32}) = -\sign(x_{32})$.
The function $g_{31}$ satisfies
$g'_{31}(t) = \beta_1(t) g_{32}(t)$:
it is therefore strictly decreasing for $t < t_{32}$
and strictly increasing for $t > t_{32}$.
If $\Gamma \in N_2$ (so that $g_{31}(0) = 0$) and $t_{32} > 0$,
there exists a unique $t_{31} > t_{32}$ such that $g_{31}(t_{31}) = 0$.
Similarly,
if $\Gamma \in N_2$ and $t_{32} < 0$,
there exists a unique $t_{31} < t_{32}$ such that $g_{31}(t_{31}) = 0$.
If $\Gamma \in N_2$ and $t_{32} = 0$, set $t_{31} = 0$
so that $t_{31}$ is defined for any $\Gamma \in N_2$
and it always satisfies $\sign(t_{31}) = -\sign(x_{32})$.
Notice that the function $g_{32}$ has a unique real zero at $t_{32}$:
in particular, $x_{32} \ne 0$ implies $g_{32}(t_{31}) \ne 0$.
We claim that $t_{31}$ is a sufficiently smooth function of $x_{32}$,
$\beta_1$ and $\beta_2$.

Indeed, $g_{32}$ is defined by Equation \eqref{equation:g32}
and is therefore a sufficiently smooth function of $x_{32}$ and $\beta_2$.
Set
\[ h_{31}(t) =
\int_0^1 \beta_1(st) g_{32}(st) ds =
\begin{cases}
\dfrac{g_{31}(t)}{t}, & t \ne 0, \\
g'_{31}(0), & t = 0,
\end{cases} \]
a sufficiently smooth function of $g_{32}$ and $\beta_1$
(we assume $\Gamma \in N_2$ and therefore $g_{31}(0) = 0$).
The function $h_{31}$ has a unique zero at $t_{31}$.
We also have $h'_{31}(t_{31}) \ne 0$.
Indeed, since $g_{31}(t) = t\,h_{31}(t)$ we have
$g'_{31}(t) = t\,h'_{31}(t) + h_{31}(t) = \beta_1(t) g_{32}(t)$.
If $x_{32} \ne 0$ we have
$t_{31} h'_{31}(t_{31}) = \beta_1(t_{31}) g_{32}(t_{31}) \ne 0$.
If $x_{32} = 0$ we have $g_{32}(0) = 0$, $g'_{32}(0) \ne 0$ so that
$g_{31}(0) = g'_{31}(0) = 0$, $g''_{31}(0) \ne 0$ and therefore
$h_{31}(0) = 0$,
$h'_{31}(0) = \lim_{t\to 0} g_{31}(t)/t^2 \ne 0$.
The claim that $t_{31}$ is sufficiently smooth
follows from the implicit function theorem.

Assume $x_{32}$ given, so that $t_{31}$ is also determined:
we construct $z_2$ and $z_3$,
which are sufficiently smooth functions of $x_{32}$
and of the functions $\beta_i$.
Given $z$ (a temporary and auxiliary real variable),
define functions $g_{43,z}$ and $g_{42,z}$ by
\begin{align*}
g_{43,z}(t) &= \int_0^t \beta_3(\tau)d\tau + z = g_{43,0}(t) + z, \\
g_{42,z}(t) &= \int_{t_{31}}^t \beta_2(\tau) g_{43,z}(\tau) d\tau =
g_{42,0}(t) + z \int_{t_{31}}^t \beta_2(\tau) d\tau.
\end{align*}
Write $\tilde z_2(z) = g_{42,z}(0)$, 
a sufficiently smooth function of $x_{32}$ and $z$.
Finally, define the function $g_{41,z}$ by
\[ g_{41,z}(t) = \int_{t_{31}}^t \beta_1(\tau) g_{42,z}(\tau) d\tau =
g_{41,0}(t) + z \int_{t_{31}}^t \int_{t_{31}}^\tau
\beta_1(\tau) \beta_2(\tilde\tau) d\tilde\tau d\tau. \]
We want to write $g_{41} = g_{41,z}$ for an appropriate $z = z_3$
(recall that $x_{41} = g_{41}(0)$ is supposed to be $0$);
this requires a few more computations.
For $x_{32} = 0$, we have $t_{31} = 0$
so that $z_2 = g_{42,z}(0) = 0$ and $g_{41,z}(0) = 0$ for all $z$:
in this case set $z_3 = 0$.
In general, 
we have that
\[ g_{41,z}(0) = g_{41,0}(0) + Hz, \quad
H = \int_{t_{31}}^0 \int_{t_{31}}^\tau
\beta_1(\tau) \beta_2(\tilde\tau) d\tilde\tau d\tau. \]
Near $x_{32} = t_{31} = 0$, we have
\[ g_{41,0}(0) = C t_{31}^3 + O(t_{31}^4), \qquad
H = \tilde C t_{31}^2 + O(t_{31}^3) \]
where $C$ and $\tilde C$ are positive constants
(or, more precisely, sufficiently smooth positive functions
of the $\beta_i$).
For $t_{31} \ne 0$,
we can therefore solve $g_{41,z}(0) = 0$
to obtain a unique solution $z = z_3$;
for $t_{31}$ near $0$ we have
$z_3 = (-C/\tilde C) t_{31} + O(t_{31}^2)$.
Thus, both $z_3$ and $z_2 = \tilde z_2(z_3)$
are $C^1$ functions of $x_{32}$
(and the functions $\beta_i$),
including at $x_{32} = 0$.

Define the function $\phi: N_2 \to \RR^2$
by $\phi(\Gamma) = (x_{43}-z_3,x_{42}-z_2)$.
This function is clearly a submersion.
We claim that $\Gamma \in \cM_{1,1}$
if and only if $\phi(\Gamma) = (0,0)$:
the proof of this claim completes the proof of the lemma.

We first prove that if $\phi(\Gamma) = (0,0)$ then $\Gamma \in \cM_{1,1}$.
Indeed, if $x_{32} = 0$ we have $\Gamma(0) = I$
and the itinerary of $\Gamma$ is $\eta$.
If $x_{32} \ne 0$, there are at least two 
moments of non transversality: $t = 0$ and $t = t_{31} \ne 0$.
We have $\Gamma(0) \in \bQ^{-1}[\Bru_\rho]$
for some $\rho \le \rho_2$.  % Zariski
The closure of $\bQ^{-1}[\Bru_{\rho_1}] \subset \Lo_4^1$
is the subset of matrices $L$ with
$(l_{31} = l_{41} = l_{42} = 0)$.
Thus we have $\Gamma(t_{31}) \in \bQ^{-1}[\Bru_\rho]$
for some $\rho \le \rho_1$.
It is possible to prove directly that if $\Gamma \in N_2$
satisfies $\phi(\Gamma) = (0,0)$ then its itinerary
is either $\eta$, $[aba][cba]$ or $[cba][aba]$;
it is probably easier to invoke Theorem 4 in \cite{Goulart-Saldanha1}
(extra letters would violate multiplicity).

Conversely, assume $\Gamma \in \cM_{1,1}$.
There is a unique value of $t$ for which $g_{21} = 0$:
this value corresponds to a letter $\sigma_2 = [cba]$ or $\sigma_0 = \eta$
in the itinerary of $\Gamma$.
Reparametrize $\Gamma$ so that $g_{21}(0) = 0$,
$\Gamma \in N_2$ and the above discussion applies.
If $\iti(\Gamma) = \eta$ we have $\Gamma(0) = I$.
Otherwise $\Gamma(0) \in \bQ^{-1}[\Bru_{\rho_2}]$;
$t_{31}$ is such that $\Gamma(t_{31}) \in \bQ^{-1}[\Bru_{\rho_1}]$.
This implies that $x_{43} = z_3$ and $x_{42} = z_2$
and therefore $\phi(\Gamma) = (0,0)$, as desired.
\end{proof}

Recall that we know from Proposition \ref{prop:closed}
that  $\cM_{\mu_0,\mu_1} \subset \cL_3$ is a closed subset.

\bigbreak

We conclude this section by summing up
our conclusions as a proposition.

\begin{prop}
\label{prop:submanifold}
For any pair $(\mu_0,\mu_1) \in \NN^2$,
the subset $\cM_{\mu_0,\mu_1} \subset \cL_3$
is a non-empty topological submanifold of codimension $2(\mu_0+\mu_1)$.
Moreover, there exists a transversal bundle
of vector spaces of dimension $2(\mu_0+\mu_1)$.
\end{prop}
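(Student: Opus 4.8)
The plan is to bootstrap from the cases already settled — $\cM_{0,0}$ (convex curves, codimension $0$), $\cM_{1,0}$ (Corollary~\ref{coro:cM10}), $\cM_{0,1}=\cL_3[[cba]]$ (Section~\ref{section:closed}) and $\cM_{1,1}$ (Corollary~\ref{coro:eta}) — by localizing at a curve $\Gamma_0\in\cM_{\mu_0,\mu_1}$ and using the product description of a neighbourhood of $\Gamma_0$ from Remark~\ref{remark:neighborhood}. Non-emptiness is immediate: the word $w_0=[aba]^{\mu_0}[cba]^{\mu_1}\in M^\ast$ has $\mu(w_0)=(\mu_0,\mu_1)$, so $\cM_{\mu_0,\mu_1}\supseteq\cL_3[w_0]\ne\varnothing$.

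For the submanifold statement, fix $\Gamma_0\in\cM_{\mu_0,\mu_1}$ with itinerary $w=\sigma_1\cdots\sigma_\ell\in M^\ast$, so $\mu(w)=(\mu_0,\mu_1)$, and take the neighbourhood $U$ of $\Gamma_0$ together with the homeomorphism $\Phi\colon U\to(U_1\times\cdots\times U_\ell)\times H$, $\Phi(\Gamma)=\big((\Gamma|_{J_1},\dots,\Gamma|_{J_\ell}),\ast\big)$, furnished by Remark~\ref{remark:neighborhood}, where $U_k$ is a neighbourhood of $\Gamma_0|_{J_k}$ and $J_k$ is a short interval around the $k$-th singular instant. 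Set $\cM_{\ast,k}=\{\Gamma\in U_k\mid\iti(\Gamma)\in M^\ast\}$. Since $\iti(\Gamma)$ is the concatenation of the $\iti(\Gamma|_{J_k})$, and since a concatenation of words lies in $M^\ast$ precisely when each factor does, one has $\iti(\Gamma)\in M^\ast$ if and only if $\Gamma|_{J_k}\in\cM_{\ast,k}$ for every $k$; and in that case $\mu(\Gamma)=\sum_k\mu(\Gamma|_{J_k})=\sum_k\mu(\sigma_k)=(\mu_0,\mu_1)$ \emph{automatically}. The crucial input for the middle equality is the classification (carried out in Sections~\ref{section:closed}--\ref{section:eta}) of the words $v\in M^\ast$ with $v\sqsubseteq\sigma_k$ for $\sigma_k\in M$: for $\sigma_k\in\{aba,bcb\}$ the only such $v$ is $[\sigma_k]$; for $\sigma_k=cba$ likewise, the word $[ac]$ being excluded since $ac\notin M$; for $\sigma_k=bacb$ the possibilities are $[bacb]$, $[aba]$, $[bcb]$; for $\sigma_k=\eta$ they are $\eta$, $[aba][cba]$, $[cba][aba]$ — in each case every such $v$ has $\mu(v)=\mu(\sigma_k)$, so no curve of $\cM$ near $\Gamma_0$ leaves $\cM_{\mu_0,\mu_1}$. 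Hence $\cM\cap U=\cM_{\mu_0,\mu_1}\cap U=\Phi^{-1}\big((\cM_{\ast,1}\times\cdots\times\cM_{\ast,\ell})\times H\big)$. Now Section~\ref{section:closed} (for $\sigma_k\in\{aba,bcb,cba\}$), Lemma~\ref{lemma:cM10} (for $bacb$) and Lemma~\ref{lemma:eta} (for $\eta$) say precisely that $\cM_{\ast,k}$ is a locally flat topological submanifold of $U_k$ of codimension $2(\mu_0^{(k)}+\mu_1^{(k)})$, where $\mu(\sigma_k)=(\mu_0^{(k)},\mu_1^{(k)})$. A finite product of locally flat topological submanifolds is again one, with codimensions adding, and the extra Hilbert factor $H$ is harmless; since $\sum_k 2(\mu_0^{(k)}+\mu_1^{(k)})=2(\mu_0+\mu_1)$ by the very definition of $\mu$ on $\cM$, near $\Gamma_0$ the set $\cM_{\mu_0,\mu_1}$ is a locally flat topological submanifold of codimension $2(\mu_0+\mu_1)$. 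As $\Gamma_0$ was arbitrary, this proves the submanifold claim.

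For the transversal bundle, note that each base case also produces a transversal bundle to the corresponding $\cM_{\ast,k}$ inside $U_k$: a normal vector bundle of the smooth stratum $\cL_3[[\sigma_k]]$ when $\sigma_k\in\{aba,bcb,cba\}$; the rank-$2$ bundle arising from the topological submersion $v\circ x$ in the proof of Lemma~\ref{lemma:cM10}; and the rank-$4$ ($C^1$) bundle arising from the submersion $(x_{31},x_{41},\phi)$ in the proof of Lemma~\ref{lemma:eta}. Taking the direct sum of these over the chart $U$ (and discarding the $H$-direction) exhibits $\cM_{\mu_0,\mu_1}\cap U$ as the zero fibre of a topological submersion $U\to\RR^{2(\mu_0+\mu_1)}$, hence as the zero section of a local transversal bundle of rank $2(\mu_0+\mu_1)$. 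Since $\cM_{\mu_0,\mu_1}$ is a separable Hilbert manifold, hence paracompact, these local bundles and their tubular neighbourhoods patch together — by a partition-of-unity argument of the kind used for $n=2$ in \cite{Saldanha3} — into a global transversal vector bundle of dimension $2(\mu_0+\mu_1)$.

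I expect the real work to be concentrated not in any single step but in the bookkeeping of the second paragraph: one must make sure that the itinerary decomposition induced by $\Phi$ is compatible \emph{both} with membership in $\cM$ \emph{and} with the invariant $\mu$, so that all curves of $\cM$ near $\Gamma_0$ in fact lie in $\cM_{\mu_0,\mu_1}$ and no ``forbidden'' boundary word — one carrying a letter of dimension $0$, or one of the letters $ac$, $abc$ — can appear among the nearby itineraries. This is exactly what the detailed case analyses of Sections~\ref{section:closed}--\ref{section:eta} were built to exclude. A lesser but genuine point is the passage from the local transversal bundles to a single global one, which is routine over a paracompact base but should nonetheless be spelled out.
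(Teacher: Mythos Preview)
Your proof is correct and follows essentially the same approach as the paper's: the paper's own proof is just the terse sentence ``the first claim follows from Lemmas~\ref{lemma:cM10} and~\ref{lemma:eta} and Remark~\ref{remark:neighborhood}; the second claim follows from the fact that $\cM_{\mu_0,\mu_1}$ is mostly smooth and well behaved where it is not,'' and your argument is a careful unpacking of exactly those ingredients via the product chart of Remark~\ref{remark:neighborhood} and the local analyses of Sections~\ref{section:closed}--\ref{section:eta}. Your treatment of the transversal bundle is in fact more explicit than the paper's.
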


\begin{proof}
The first claim follows from Lemmas \ref{lemma:cM10} and \ref{lemma:eta}
and Remark \ref{remark:neighborhood}.
The second claim (concerning the transversal bundle)
follows from the fact that $\cM_{\mu_0,\mu_1}$ is mostly smooth 
and well behaved where it is not.
\end{proof}

%  If $w \in M^\ast$ is a word of length $\mu_0+\mu_1$
%  with $\mu_0$ letters in the set $\{[aba],[bcb]\}$
%  and $\mu_1$ letters equal to $[cba]$
%  then $\cL_3[w] \subset \cL_3$
%  is a non-empty contractible submanifold
%  of codimension $d = 2(\mu_0+\mu_1)$.
%  We also have $\cL_3[w] \subseteq \cM_{\mu_0,\mu_1}$.

%  More generally, consider the finite set
%  $M^\ast_{\mu_0,\mu_1} = \mu^{-1}[\{(\mu_0,\mu_1)\}] \subset M^\ast$
%  of words $w \in M^\ast$ with $\mu(w) = (\mu_0,\mu_1)$.
%  We have 
%  \[ \cM_{\mu_0,\mu_1} = \bigsqcup_{w \in M^\ast_{\mu_0,\mu_1}} \cL_3[w]. \]
%  Consider $\Gamma_0 \in \cL_3[w] \subset \cM_{\mu_0,\mu_1}$ and
%  $w \in M^\ast_{\mu_0,\mu_1}$.
%  We construct an open neighborhood $U \subset \cL_3$  of $\Gamma_0$
%  and a topological submersion $\phi: U \to \RR^d$.
%  We verify that, for all $\Gamma \in U$,
%  $\Gamma \in \cM_{\mu_0,\mu_1}$ if and only if $\phi(\Gamma) = 0$.

%  Write $w = \sigma_1 \cdots \sigma_\ell$, $\sigma_i \in M$.
%  Let $t_1, \ldots, t_\ell$ be the corresponding times for $\Gamma$,
%  so that $\sing(\Gamma_0) = \{t_1, \ldots, t_\ell\}$.
%  Consider small neighborhoods $V_i \subset \Spin_4$ of $\Gamma(t_i)$:
%  $\Gamma \in U$ implies that $\Gamma(t_i) \in V_i$
%  and that the arcs $\Gamma|_{[t_i,t_{i+1}]}$ are convex.
%  We construct functions $\phi_i: U \to \RR^{d_i}$
%  where $d_i = 4$ if $\sigma_i = \eta$ and $d_i = 2$ otherwise.

\bigbreak

\section{Contractibility of $\cM_{\mu_0,\mu_1}$}
\label{section:contractibility}

At this point we know that $\cM_{\mu_0,\mu_1} \subset \cL_3$
is a closed subset and a topological submanifold
of codimension $2(\mu_0+\mu_1)$.
The set $\cM_{\mu_0,\mu_1}$ is
% furthermore
stratified as
\[ \cM_{\mu_0,\mu_1} =
\bigsqcup_{w \in M^\ast,\;\mu(w) = (\mu_0,\mu_1)}
\cL_3[w] \subset \cL_3. \]
The aim of this section is to prove that $\cM_{\mu_0,\mu_1}$
is contractible.
We first need a lemma about topological Hilbert manifolds.
The result follows easily from, for instance, \cite{Palais};
we include a proof for completeness and comodity of the reader.

\begin{lemma}
\label{lemma:3contractible}
Let $\cH$ be a topological Hilbert manifold.
Assume that there exists a topological submanifold $\cH_0 \subset \cH$
with the following properties.
\begin{enumerate}
\item{The subset $\cH_0 \subset \cH$ is closed and contractible.}
\item{The submanifold $\cH_0 \subset \cH$ has codimension $1$
and admits an open tubular neighborhood $U_0$,
$\cH_0 \subset U_0 \subseteq \cH$.}
\item{The complement $\cH \smallsetminus \cH_0$
has two connected components $\cH_{+}$ and $\cH_{-}$,
both open subsets of $\cH$, both contractible.}
\end{enumerate}
Then $\cH$ is also contractible.
\end{lemma}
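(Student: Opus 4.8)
The plan is to reduce the lemma to a double application of the Mayer--Vietoris sequence and van Kampen's theorem. I would cover $\cH$ by the two open sets $A = \cH_{+} \cup U_0$ and $B = \cH_{-} \cup U_0$. Then $A \cup B = \cH$ (because $\cH_0 \subset U_0$) and, since $\cH_{+} \cap \cH_{-} = \varnothing$, one checks $A \cap B = U_0$. The first thing to record is the product structure implicit in hypotheses (2)--(3): the open tubular neighborhood can be written $U_0 \approx \cH_0 \times \RR$ with $\cH_0$ corresponding to $\cH_0 \times \{0\}$, and $\cH_0$ is a two-sided \emph{separating} hypersurface, so that $U_0 \cap \cH_{+} = \cH_0 \times (0,\infty)$ and $U_0 \cap \cH_{-} = \cH_0 \times (-\infty,0)$. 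Consequently $U_0$, $U_0 \cap \cH_{+}$ and $U_0 \cap \cH_{-}$ each deformation retract onto a copy of $\cH_0$ and are therefore contractible by hypothesis (1).

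Next I would show that $A$ is weakly contractible, and symmetrically $B$. Here $\{\cH_{+}, U_0\}$ is an open cover of $A$, both pieces and their intersection $U_0 \cap \cH_{+}$ are path-connected, and all three are contractible --- $\cH_{+}$ by hypothesis (3), the other two by the previous paragraph. Van Kampen's theorem then gives $\pi_1(A) = 1$, and the Mayer--Vietoris sequence $\cdots \to \widetilde H_n(U_0 \cap \cH_{+}) \to \widetilde H_n(\cH_{+}) \oplus \widetilde H_n(U_0) \to \widetilde H_n(A) \to \widetilde H_{n-1}(U_0 \cap \cH_{+}) \to \cdots$ forces $\widetilde H_{*}(A) = 0$; by the Hurewicz and Whitehead theorems, $A$ (and likewise $B$) is weakly contractible. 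Then I would run van Kampen and Mayer--Vietoris once more on the cover $\cH = A \cup B$ with $A \cap B = U_0$: since $A$, $B$ and $U_0$ are path-connected with trivial fundamental group and trivial reduced homology, the same holds for $\cH$, so $\cH$ is weakly contractible. Finally, a topological Hilbert manifold is an ANR and hence has the homotopy type of a CW complex; a weakly contractible CW complex is contractible, and contractibility is a homotopy invariant, so $\cH$ is contractible.

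The Mayer--Vietoris/van Kampen bookkeeping is routine; the step that needs real care is the one in the first paragraph, namely extracting the product description of $U_0$ and, in particular, that the two local sides of $\cH_0$ lie in the \emph{two different} components $\cH_{\pm}$. If instead both sides lay in $\cH_{+}$, then $U_0 \cap \cH_{+}$ would have two contractible components, the Mayer--Vietoris sequence would force $H_1(A) \cong \ZZ$, and the conclusion of the lemma would actually be false; so this is the crux. It is, however, quickly settled: $\cH$ is connected (this is implicit in, and easily deduced from, the hypotheses, since $\cH_0$ is nowhere dense and not open), and each of the two components of $U_0 \smallsetminus \cH_0$ has $\cH_0$ in its closure, so a short separation argument for $\cH$ rules out the degenerate case. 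A secondary, purely citational matter is the passage from weak contractibility to contractibility, which rests only on the standard facts that Hilbert manifolds are ANRs and that ANRs have the homotopy type of CW complexes.
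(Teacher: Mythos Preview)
Your proposal is correct and follows essentially the same route as the paper: cover $\cH$ by $U_{\pm} = \cH_{\pm} \cup U_0$, use the tubular neighborhood to identify $U_{+} \cap U_{-} = U_0$ and control the pieces, then apply van Kampen and Mayer--Vietoris, and finally upgrade weak contractibility to contractibility via the Hilbert-manifold/ANR machinery (the paper cites Palais for this last step). The only difference is one of economy: the paper observes directly, via the explicit homeomorphism $\Phi: \cH_0 \times (-1,1) \to U_0$, that $\cH_{\pm} \hookrightarrow U_{\pm}$ is a deformation retract, so $U_{\pm}$ are contractible outright; you instead run an extra layer of van Kampen/Mayer--Vietoris on the cover $\{\cH_{+}, U_0\}$ of $A$ to get the same conclusion. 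Your explicit discussion of why the two sides of $U_0 \smallsetminus \cH_0$ must land in \emph{different} components $\cH_{\pm}$ is a point the paper passes over with a ``without loss of generality''.
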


\begin{proof}
Let $U_0$ be the tubular neighborhood as in the statement.
Let $\Phi: \cH_0 \times (-1,1) \to U_0$ be a homeomorphism
with $\Phi(x,0) = x$ for all $x \in \cH_0$.
We may assume without loss of generality that
$\Phi^{-1}[U_0 \cap \cH_{+}] = \cH_0 \times (0,1)$.
Let $U_{\pm} = \cH_{\pm} \cup U_0$.
The inclusions $\cH_{\pm} \subset U_{\pm}$
are deformation retracts 
(explicitly constructed via $\Phi$)
and therefore 
the open subsets $U_{\pm} \subset M$ are both contractible.
The intersection $U_0 = U_{+} \cap U_{-}$ is also contractible
(again from $\Phi$).
The theorems of Seifert--Van Kampen and Mayer--Vietoris
imply that $\cH$ is connected, path connected,
simply connected,
with all homology groups $H_k(\cH)$ trivial for $k \ge 1$.
Thus, the homotopy groups $\pi_k(\cH)$, $k \ge 1$, are also trivial.
Thus, the inclusion $\cH_{+} \subset \cH$ is a weak homotopy equivalence.
Since both $\cH_{+}$ and $\cH$ are topological Hilbert manifolds
the inclusion is a homotopy equivalence
and $\cH$ is contractible (\cite{Palais}, Corollary of Theorem 15),
as desired.
\end{proof}

Before stating our next lemma
we need a few auxiliary definitions.
Define an equivalence relation in $M$ by
$\sigma_0 \sim \sigma_1$ if and only if $\mu(\sigma_0) = \mu(\sigma_1)$.
Equivalently, the nontrivial equivalences are
$aba \sim bacb \sim bcb$.
Let $M^\bullet = \{ aba, cba, \eta \} \subset M$
so that $M^\bullet$ has a unique representative
of each $\sim$-equivalence class in $M$.
Extend the equivalence relation $\sim$ to $M^\ast$:
two words are equivalent if they have the same length
and are letter by letter equivalent.
The set of words $(M^\bullet)^\ast \subset M^\ast$
has a unique representative of each equivalence class.
For $w_0 \in (M^\bullet)^\ast$, we have
$\mu(w_0) = (N_{aba}+N_\eta, N_{cba} + N_\eta)$
where $N_\sigma$ is the number of copies of $\sigma$ in $w_0$.
The equivalence class $\{w_1 \in M^\ast, w_1 \sim w_0\}$
has $3^{N_{aba}}$ elements.

For $w_0 \in (M^\bullet)^\ast$, let
\begin{equation}
\label{equation:Lbullet}
\cL_3^\bullet[w_0] =
\bigsqcup_{w_1 \in M^\ast, w_1 \sim w_0} \cL_3[w_1] \subseteq
\cM_{\mu(w_0)} \subset \cL_3. 
\end{equation}
Thus, if $\mu(w_0) = (\mu_0,\mu_1)$ and $N_\eta = 0$ then
$\cL_3^\bullet[w_0] \subset \cM_{\mu_0,\mu_1}$
is an open subset.
The subset $\cL_3^\bullet[w_0] \subset \cL_3$
is a topological Hilbert submanifold
of codimension $2(\mu_0 + \mu_1) + N_\eta$
but usually not a closed subset.

\begin{lemma}
\label{lemma:bulletcontractible}
For $w_0 \in (M^\bullet)^\ast$, 
the subset $\cL_3^\bullet[w_0] \subset \cM_{\mu_0,\mu_1}$
is contractible.
\end{lemma}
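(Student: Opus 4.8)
\emph{Strategy.} The plan is to prove by induction a statement slightly more general than the lemma, robust enough to feed into its own inductive step. Call $(w,F)$ a \emph{decorated word} if $w=\sigma_1\cdots\sigma_\ell$ with every $\sigma_k\in M$ and $F\subseteq\{1,\dots,\ell\}$ is a set of ``free'' positions with $\mu(\sigma_k)=(1,0)$ (equivalently $\sigma_k\in\{aba,bacb,bcb\}$) for every $k\in F$. Put
\[ \cL_3^\bullet[w,F]\;=\;\bigsqcup\cL_3[w'], \]
the union over all words $w'$ obtained from $w$ by replacing $\sigma_k$, for each $k\in F$, by an arbitrary element of $\{aba,bacb,bcb\}$. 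If $w_0\in(M^\bullet)^\ast$ and $F_0$ is the set of its $(1,0)$-positions then $\cL_3^\bullet[w_0,F_0]=\cL_3^\bullet[w_0]$, while $\cL_3^\bullet[w,\varnothing]=\cL_3[w]$ is a single stratum. I claim that every $\cL_3^\bullet[w,F]$ is a contractible topological Hilbert manifold; the lemma is the special case $F=F_0$. That $\cL_3^\bullet[w,F]$ is a topological Hilbert manifold follows from Remark~\ref{remark:neighborhood}: near a curve with singular set $\{t_1<\dots<t_\ell\}$ it is homeomorphic to $(V_1\times\cdots\times V_\ell)\times H$, where $V_k$ is a neighborhood of a single stratum for $k\notin F$, and for $k\in F$ the factor $V_k$ is the local model of $\cM_{1,0}$ from Section~\ref{section:bacb}, which by the proof of Lemma~\ref{lemma:cM10} is homeomorphic to $\{v(x)=0\}\times H'$ with $\{v(x)=0\}$ a union of two rays issuing from the origin, hence homeomorphic to $\RR$.

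\emph{The induction.} I induct on $|F|$. If $|F|=0$ then $\cL_3^\bullet[w,F]=\cL_3[w]$ is contractible, being a stratum (Section~\ref{section:review}). If $|F|\ge 1$, choose $p\in F$, put $F'=F\smallsetminus\{p\}$, and let $w^{aba},w^{bacb},w^{bcb}$ denote $w$ with $\sigma_p$ replaced by $aba,bacb,bcb$. Since each curve of $\cH:=\cL_3^\bullet[w,F]$ carries exactly one of these three letters at position $p$,
\begin{align*}
\cH &= \cH_{+}\sqcup\cH_{0}\sqcup\cH_{-}, \\
\cH_{+} &= \cL_3^\bullet[w^{aba},F'],\quad
\cH_{0}=\cL_3^\bullet[w^{bacb},F'],\quad
\cH_{-}=\cL_3^\bullet[w^{bcb},F'],
\end{align*}
and by the inductive hypothesis $\cH_{+},\cH_{0},\cH_{-}$ are contractible. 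I then verify the hypotheses of Lemma~\ref{lemma:3contractible} for $(\cH,\cH_0)$: that $\cH_0$ is closed in $\cH$; that it has codimension $1$ and admits an open tubular neighborhood; and that $\cH_{+},\cH_{-}$ are the two connected components of $\cH\smallsetminus\cH_0$ (they are open because $\cH_0$ is closed, and connected because contractible). Lemma~\ref{lemma:3contractible} then yields that $\cH$ is contractible, closing the induction.

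\emph{Verifying the hypotheses.} This is local at position $p$ and rests on Section~\ref{section:bacb}. In the coordinates $x=(x_1,x_2,x_3)$ of the proof of Lemma~\ref{lemma:cM10}, the local model of $\cM_{1,0}$ near a $[bacb]$-curve is the bent line $\{v(x)=0\}$; its bend point $\{x=0\}$ is the stratum $[bacb]$ (hence codimension $1$ in it) and the two rays are the strata $[aba]$ and $[bcb]$. The parameter along this bent line is a continuous function on $\cH$ defined near $\cH_0$, with zero set $\cH_0$, whose sign separates $\cH_{+}$ from $\cH_{-}$; this provides the two-sided collar, giving the codimension-$1$ and tubular-neighborhood claims. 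Closedness of $\cH_0$ in $\cH$ is a global statement handled as in the proof of Proposition~\ref{prop:closed}: if $\cL_3[w_1]$ with $w_1$ a word allowed in $\cH$ meets $\overline{\cL_3[w_0]}$ with $w_0$ allowed in $\cH$ and carrying $bacb$ at position $p$, then $w_0\sqsubseteq w_1$; since $w_0$ and $w_1$ have the same length $\ell$, the concatenation description of $\sqsubseteq$ together with Lemma~\ref{lemma:sqsubseteq} forces $w_1$ to agree with $w_0$ at position $p$, so $\cL_3[w_1]\subseteq\cH_0$.

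\emph{Main obstacle.} The hardest point is the tubular-neighborhood claim (hypothesis (2) of Lemma~\ref{lemma:3contractible}): one must assemble the local product pictures into a single globally consistent open tubular neighborhood of $\cH_0$ in $\cH$, while coping with the fact that the normal coordinate $v$ of Section~\ref{section:bacb} is merely continuous, not smooth. I expect this to follow by the mechanism that produces the transversal bundle in Proposition~\ref{prop:submanifold}, applied to the single free position $p$ rather than to all of $\cM_{\mu_0,\mu_1}$.
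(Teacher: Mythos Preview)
Your proposal is correct and follows essentially the same approach as the paper. Your decorated-word formalism $\cL_3^\bullet[w,F]$ is a mild generalization of the paper's sets $\cM_k[w_1]$ (where the paper fixes the first $k$ free positions to $aba$ rather than allowing an arbitrary $F$), and both arguments induct on the number of free positions, splitting off one position at a time into the three pieces $\{aba,bacb,bcb\}$ and invoking Lemma~\ref{lemma:3contractible}. You are in fact more explicit than the paper about verifying the hypotheses of that lemma (closedness, codimension one, tubular neighborhood, two components); the paper simply asserts that Lemma~\ref{lemma:3contractible} applies. Your closedness argument via $\sqsubseteq$ and the length constraint is sound, and your identification of the tubular neighborhood as the delicate point is accurate---the paper implicitly relies on the same local picture from Section~\ref{section:bacb} and Proposition~\ref{prop:submanifold}.
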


\begin{proof}
Let $N = N_{aba}$ be the number of copies of $[aba]$ in $w_0$.
Let $I_N = \{w_0\}$ and
$I_0 = \{ w_1 \in M^\ast, w_1 \sim w_0\}$.
For $0 < k < N$, let $I_k \subset I_0$
be the set of words for which the first $k$ letters
in $\{aba,bacb,bcb\}$ are equal to $aba$
so that $|I_k| = 3^{(N-k)}$.
For $k > 0$ and $w \in I_k$,
we define $A_{k,+}(w), A_{k,0}(w), A_{k,-}(w) \in I_{k-1}$.
Take $A_{k,+}(w) = w \in I_{k-1}$.
The word $A_{k,0}(w) \in I_{k-1}$ is obtained from $w$
by substituting $[bacb]$ for the $k$-th copy of $[aba]$ in $w$.
Similarly,
the word $A_{k,-}(w) \in I_{k-1}$ is obtained from $w$
by substituting $[bcb]$ for the $k$-th copy of $[aba]$ in $w$.
For instance, if $w_0 = [aba][cba]\eta[aba][aba]\eta$
then $N = 3$ and 
\[ I_2 = \{ [aba][cba]\eta[aba][aba]\eta,
[aba][cba]\eta[aba][bacb]\eta, [aba][cba]\eta[aba][bcb]\eta \}; \]
in this example,
$I_3 = \{w_0\} \subset I_2 \subset I_1 \subset I_0$;
$I_0$ has 27 elements.
We have
\begin{align*}
A_{2,+}([aba][cba]\eta[aba][bacb]\eta) &=
[aba][cba]\eta[aba][bacb]\eta, \\
A_{2,0}([aba][cba]\eta[aba][bacb]\eta) &=
[aba][cba]\eta[bacb][bacb]\eta, \\
A_{2,-}([aba][cba]\eta[aba][bacb]\eta) &=
[aba][cba]\eta[bcb][bacb]\eta. 
\end{align*}

For $w \in I_0$, let $\cM_{0}[w] = \cL_3[w]$
so that $\cM_{0}[w]$ is a contractible topological Hilbert manifold
(Theorem 2, \cite{Goulart-Saldanha1}).
For $0 < k \le N$, $w_1 \in I_k$,
recursively define
\[ \cM_{k}[w_1] = \cM_{k-1}[A_{k,-}(w_1)] \sqcup
\cM_{k-1}[A_{k,0}(w_1)] \sqcup \cM_{k-1}[A_{k,+}(w_1)].  \]
Every set $\cM_{k}[w_1]$ is a topological Hilbert manifold
(indeed, a submanifold of finite codimension of $\cL_3$)
and $\cM_{N}[w_0] = \cL_3^\bullet[w_0]$
(Equation \eqref{equation:Lbullet}).
We claim that every $\cM_k[w_1]$ is contractible:
the proof is by induction on $k$ and the base case $k = 0$
has already been taken care of.
For the induction step,
apply Lemma \ref{lemma:3contractible}
with $\cH = \cM_{k}[w_1]$, $\cH_0 = \cM_{k-1}[A_{k,0}(w_1)]$ and
$\cH_{\pm} = \cM_{k-1}[A_{k,\pm}(w_1)]$.
This completes the proof of the claim.
In particular, 
$\cM_{N}[w_0] = \cL_3^\bullet[w_0]$
is contractible, as desired.
\end{proof}

\begin{lemma}
\label{lemma:Mmu0mu1contractible}
For every $(\mu_0,\mu_1) \in \NN^2$,
the topological Hilbert manifold $\cM_{\mu_0,\mu_1} \subset \cL_{3}$
is contractible.
\end{lemma}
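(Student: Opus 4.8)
The plan is to reduce the contractibility of the infinite-dimensional space $\cM_{\mu_0,\mu_1}$ to a finite-dimensional combinatorial statement. First I would set up the finite model. Recall from Proposition~\ref{prop:submanifold} and the stratification displayed above that $\cM_{\mu_0,\mu_1}$ is a closed topological Hilbert submanifold of $\cL_3$, that it is the disjoint union of the pieces $\cL_3^\bullet[w_0]$ over $w_0\in(M^\bullet)^\ast$ with $\mu(w_0)=(\mu_0,\mu_1)$, and that each such piece is contractible by Lemma~\ref{lemma:bulletcontractible}. Write $N_\eta(w_0)$ for the number of copies of $\eta$ in $w_0$; this is the codimension of $\cL_3^\bullet[w_0]$ in $\cM_{\mu_0,\mu_1}$, and the set of admissible $w_0$ is in bijection with the set of lattice paths from $(0,0)$ to $(\mu_0,\mu_1)$ using unit East steps (a letter $aba$), unit North steps (a letter $cba$) and diagonal North-East steps (a letter $\eta$), the number of diagonal steps being $N_\eta(w_0)$. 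By Lemmas~\ref{lemma:sqsubseteq} and \ref{lemma:sqsubseteq2}, by Lemma~\ref{lemma:eta}, and by the localisation of Remark~\ref{remark:neighborhood}, the incidence relations $\cL_3^\bullet[w_0']\subseteq\overline{\cL_3^\bullet[w_0]}$ are exactly those in which $w_0'$ is obtained from $w_0$ by resolving some of the diagonal steps, each either into East-then-North or into North-then-East. Thus $\cM_{\mu_0,\mu_1}$ is, stratum by stratum, a thickening of the finite cubical complex $\cK_{\mu_0,\mu_1}$ whose cells are these lattice paths, a path with $k$ diagonals forming a $k$-cube (each diagonal contributing the two resolutions as opposite pairs of facets).

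Next I would carry out the reduction. Using the product charts of Remark~\ref{remark:neighborhood}, the explicit transversal sections of Sections~\ref{section:bacb} and \ref{section:eta}, and the transversal bundle provided by Proposition~\ref{prop:submanifold}, one builds an embedded copy of $\cK_{\mu_0,\mu_1}$ inside $\cM_{\mu_0,\mu_1}$ meeting every stratum $\cL_3^\bullet[w_0]$ in a single cell and transversally — equivalently, one may take the subcomplex of the dual CW complex $\cD_3$ spanned by the cells $c_w$ with $w\in M^\ast$, $\mu(w)=(\mu_0,\mu_1)$, and use that $i:\cD_3\to\cL_3$ is a homotopy equivalence. The inclusion of this finite-dimensional model into $\cM_{\mu_0,\mu_1}$ is a weak homotopy equivalence: over each stratum the complementary directions form a contractible Hilbert-space fibre (here Lemma~\ref{lemma:bulletcontractible} is used), and these fibres glue because $\cM_{\mu_0,\mu_1}$ is a topological Hilbert manifold; by Palais' theorem (\cite{Palais}), as in the proof of Lemma~\ref{lemma:3contractible}, it is then a homotopy equivalence. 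It therefore suffices to prove that $\cK_{\mu_0,\mu_1}$ is contractible, which is the related finite-dimensional problem.

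Finally I would prove $\cK_{\mu_0,\mu_1}$ contractible by induction on $\mu_0+\mu_1$. The base cases $\mu_0=0$ or $\mu_1=0$ are a single vertex, consistent with $\cM_{\mu_0,0}=\cL_3^\bullet[[aba]^{\mu_0}]$ being contractible and $\cM_{0,0}$ being the single point of convex curves. For the inductive step (so $\mu_0,\mu_1\ge 1$), classify cells by their last step and write $\cK_{\mu_0,\mu_1}=\overline{\cK^E}\cup\overline{\cK^N}\cup\overline{\cK^{NE}}$, where $\cK^E$, $\cK^N$, $\cK^{NE}$ consist of the paths whose final step is East, North, North-East respectively. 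Then $\overline{\cK^E}\cong\cK_{\mu_0-1,\mu_1}$, $\overline{\cK^N}\cong\cK_{\mu_0,\mu_1-1}$ and $\overline{\cK^{NE}}\cong\cK_{\mu_0-1,\mu_1-1}\times\cK_{1,1}$ (the last factor an interval, resolving the final diagonal), while $\overline{\cK^E}\cap\overline{\cK^N}=\varnothing$ and $\overline{\cK^E}\cap\overline{\cK^{NE}}\cong\overline{\cK^N}\cap\overline{\cK^{NE}}\cong\cK_{\mu_0-1,\mu_1-1}$. By the inductive hypothesis all of these subcomplexes are contractible; since the pairwise intersections are subcomplexes the inclusions are cofibrations, and two successive pushouts of contractible spaces along contractible spaces show $\cK_{\mu_0,\mu_1}$ is contractible (equivalently: Mayer--Vietoris and Seifert--Van Kampen give trivial reduced homology and trivial fundamental group, and Whitehead's theorem finishes). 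Combining this with the previous step, $\cM_{\mu_0,\mu_1}$ is contractible.

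The main obstacle is the middle step: producing the finite-dimensional model inside $\cM_{\mu_0,\mu_1}$ and showing the inclusion is a homotopy equivalence. This requires assembling the local product charts of Remark~\ref{remark:neighborhood} consistently over all strata (whose number grows with $\mu_0+\mu_1$), controlling the transversal directions studied in Sections~\ref{section:bacb} and \ref{section:eta}, and checking that the resulting incidence combinatorics is precisely that of the cubical complex $\cK_{\mu_0,\mu_1}$ — for which the partial orders $\preceq$ and $\sqsubseteq$ of \cite{Goulart-Saldanha-cw} are exactly what is needed. Once the reduction is in place, the concluding finite-dimensional induction is elementary.
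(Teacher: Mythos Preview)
Your combinatorial model $\cK_{\mu_0,\mu_1}$ is correct and is essentially the dual of the paper's own hyperplane model $T_{\mu_0,\mu_1}$, and your inductive argument that $\cK_{\mu_0,\mu_1}$ is contractible is fine. The genuine gap is precisely where you flag it: the middle step, passing from $\cM_{\mu_0,\mu_1}$ to the finite model.

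Your proposed shortcut through $\cD_3$ does not work. The cells $c_w$ for $w\in M^\ast$, $\mu(w)=(\mu_0,\mu_1)$ do \emph{not} form a subcomplex of $\cD_3$: already the boundary of $c_{[aba]}$ consists of cells such as $c_{aa}$, $c_{a[ba]}$, $c_{abab}$, all with labels outside $M^\ast$ (see Figures~\ref{fig:ababcb}--\ref{fig:bacb}), and the same happens for $c_{[bacb]}$ and $c_{[cba]}$. Even if one took the subcomplex these cells generate, the global homotopy equivalence $i:\cD_3\to\cL_3$ gives no control over the restriction to a subcomplex versus the subspace $\cM_{\mu_0,\mu_1}$. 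Your alternative route via product charts is in principle workable, but ``these fibres glue because $\cM_{\mu_0,\mu_1}$ is a topological Hilbert manifold'' is not a proof: a stratification by contractible pieces with the right incidence poset does not by itself yield a homotopy equivalence to the nerve without a regularity or mapping-cylinder condition, and verifying such a condition globally over all strata is exactly the work you have not done.

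The paper sidesteps this difficulty entirely. It never builds a map \emph{from} a finite model \emph{into} $\cM_{\mu_0,\mu_1}$; instead it uses the natural map $\tau:\cM_{\mu_0,\mu_1}\to T_{\mu_0}\times T_{\mu_1}$ going the \emph{other} way (recording the singular times of type $(1,\ast)$ and $(\ast,1)$), so that the strata $\cL_3^\bullet[w_0]$ are precisely the $\tau$-preimages of the chambers cut out by the hyperplanes $t_{0,i}=t_{1,j}$. Then, rather than proving a single global equivalence, it removes these hyperplanes one at a time: at each step a triple of already-known-contractible pieces is glued back, and Lemma~\ref{lemma:3contractible} (together with the tubular neighbourhoods supplied by Lemma~\ref{lemma:eta}) shows the union is again contractible. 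The finite model is used only as a bookkeeping device for this induction, never as the target of a homotopy equivalence. This is what buys the argument its rigour with only the local input from Sections~\ref{section:bacb}--\ref{section:eta}.
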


\begin{proof}
Let $M^\bullet_{\mu_0,\mu_1}$ be the finite set of words
$w_0 \in (M^\bullet)^\ast$ with $\mu(w_0) = (\mu_0,\mu_1)$:
% so that
\[ \cM_{\mu_0,\mu_1} =
\bigsqcup_{w_0 \in M^\bullet_{\mu_0,\mu_1}} \cL_3^\bullet[w_0]. \]
We want to show that $\cM_{\mu_0,\mu_1}$
can be obtained from these pieces $\cL_3^\bullet[w_0]$
(Equation \eqref{equation:Lbullet}, $w_0 \in M^\bullet_{\mu_0,\mu_1}$)
by applying Lemma \ref{lemma:3contractible} several times,
thus implying the contractibility of $\cM_{\mu_0,\mu_1}$.
The combinatorics of this process is best described
by considering an auxiliary finite dimensional construction.

% A convex polytope is always contractible.
If a convex polytope is cut by a finite family of hyperplanes,
the resulting pieces are also convex.
% (and contractible).
If we remove the hyperplanes one by one,
at each step we have some pieces which are left unchanged
and some triples which are glued back,
to produce a larger piece which is also convex.
For certain specific polytopes and hyperplanes,
the procedure described above is a guide
for glueing sets of curves.
At each step, the set of curves corresponding to a convex polytope
is a contractible topological Hilbert manifold.
We proceed to describe the polytopes, hyperplanes
and the correspondence.

For $d \in \NN$, let
\[ T_d = \{(t_1,\ldots,t_d) \in (0,1)^d \;|\; t_1 < \cdots < t_d \}
\subseteq \RR^d. \]
(For $d = 0$, $T_0 = \RR^0 = \{0\}$, $0 = ()$.)
The set $T_d$ is an open simplex. % , thus convex.
The set
$T_{\mu_0,\mu_1} = T_{\mu_0} \times T_{\mu_1} \subset \RR^{\mu_0+\mu_1}$
is also an open convex polytope. % and therefore contractible.
An element of $T_{\mu_0,\mu_1}$ can be written as
\begin{equation}
\label{equation:Tmu0mu1}
((t_{0,1},\ldots,t_{0,\mu_0}),(t_{1,1},\ldots,t_{1,\mu_1})),
% \in T_{\mu_0,\mu_1},
\quad
t_{0,1} < \cdots < t_{0,\mu_0},
\quad
t_{1,1} < \cdots < t_{1,\mu_1}.
\end{equation}
For $1 \le i \le \mu_0$ and $1 \le j \le \mu_1$,
the hyperplane $H_{i,j} \subset \RR^{\mu_0+\mu_1}$
of equation $t_{0,i} = t_{1,j}$ crosses the set $T_{\mu_0,\mu_1}$.
The collection of all such hyperplanes
decomposes $T_{\mu_0,\mu_1}$ into convex strata
(the intersection with a hyperplane counts
as a stratum of higher codimension).
We shall see that such strata $T_{\mu_0,\mu_1}[w_0]$
are naturally labeled by $w_0 \in M^\bullet_{\mu_0,\mu_1}$,
with the codimension of $T_{\mu_0,\mu_1}[w_0] \subset T_{\mu_0,\mu_1}$
being given by $N_\eta$, the number of copies of $\eta$ in $w_0$.

Indeed, we describe a map from $T_{\mu_0,\mu_1}$ to $M^\bullet_{\mu_0,\mu_1}$.
Consider an element of $T_{\mu_0,\mu_1}$
as in Equation \eqref{equation:Tmu0mu1}.
Sort the numbers $t_{i,j}$ in increasing order
and read the first labels,
with $[aba]$ for $0$ and $[cba]$ for $1$:
if $t_{0,j_0} = t_{1,j_1}$ then the two letters fuse to give
a single letter $\eta$.
For instance, for $\mu_0 = \mu_1 = 3$,
the point $((1/9,4/9,5/9),(2/9,4/9,7/9))$
is mapped to the word $[aba][cba]\eta[aba][cba]$.
For $w_0 \in (M^\bullet)^\ast$, 
let $T_{\mu_0,\mu_1}[w_0] \subseteq T_{\mu_0,\mu_1}$
be the set of elements mapped to the word $w_0$.
Notice that for every $w_0 \in T_{\mu_0,\mu_1}$,
the subset $T_{\mu_0,\mu_1}[w]$ is nonempty and convex.
For instance, $T_{3,3}[[aba][cba]\eta[aba][cba]]$ is the convex subset
of $\RR^3 \times \RR^3$ defined by
\[ 0 < t_{0,1} < t_{1,1} < t_{0,2} = t_{1,2} < t_{0,3} < t_{1,3} < 1. \]
Thus,
\begin{equation}
\label{equation:Tmumu}
T_{\mu_0,\mu_1} =
\bigsqcup_{w_0 \in  M^\bullet_{\mu_0,\mu_1}}
T_{\mu_0,\mu_1}[w_0] 
\end{equation}
defines a stratification of $T_{\mu_0,\mu_1}$,
as claimed.

There is also a natural map
$\tau: \cM_{\mu_0,\mu_1} \to T_{\mu_0,\mu_1}$.
Indeed, for $\Gamma \in \cM_{\mu_0,\mu_1}$
consider its singular set $\sing(\Gamma) \subset (0,1)$,
the set of times $t \in (0,1)$
for which $\Gamma(t) \notin \Bru_\eta$.
For each $t \in \sing(\Gamma)$,
we have $\Gamma(t) \in \Bru_{\eta\sigma}$
for some $\sigma \in M$
($\sigma$ being a function of $t$).
Let $\{t_{0,1} < \cdots < t_{0,\mu_0}\} \subseteq \sing(\Gamma)$
be the set of times $t$ for which $\mu(\sigma) = (1,\ast)$.
Similarly,
let $\{t_{1,1} < \cdots < t_{1,\mu_1}\} \subseteq \sing(\Gamma)$
be the set of times $t$ for which $\mu(\sigma) = (\ast,1)$.
Notice that the intersection of these two sets
is the set of times $t$ for which $\sigma = \eta$.
Define
\[ \tau(\Gamma) =
((t_{0,1},\ldots,t_{0,\mu_0}),(t_{1,1},\ldots,t_{1,\mu_1})). \]
Notice that, for $w_0 \in M^\bullet_{\mu_0,\mu_1}$,
\[ \cL_3^\bullet[w_0] = \tau^{-1}[T_{\mu_0,\mu_1}[w_0]]. \]

The set of hyperplanes used to define the stratification
in Equation \eqref{equation:Tmumu} is labeled by
$I_{\mu_0,\mu_1} = \{(i,j) \in \ZZ^2, 1 \le i \le \mu_0, 1 \le j \le \mu_1 \}$.
A subset $I \subset I_{\mu_0,\mu_1}$ defines a 
coarser stratification of $T_{\mu_0,\mu_1}$
into convex strata.
Consider subsets $I_1 \subset I_0 \subseteq I_{\mu_0,\mu_1}$,
$I_0 \smallsetminus I_1 = \{(i_0,j_0)\}$,
and the stratifications corresponding to $I_0$ and $I_1$.
In order to pass from the $I_1$-stratification to the $I_0$-stratification,
some strata are not changed 
(if they are disjoint from the hyperplane $H_{i_0,j_0}$)
and some strata are cut into three parts
(if they are crossed by $H_{i_0,j_0}$).
% notice that the intersection with $H_{i_0,j_0}$
% defines a higher-codimensional stratum
Conversely, we pass
from the $I_0$-stratification to the $I_1$-stratification
by attaching certain triples of strata.

A subset $I \subseteq I_{\mu_0,\mu_1}$,
defines an equivalence relation in $M^\bullet_{\mu_0,\mu_1}$:
two words $w_0,w_1 \in M^\bullet_{\mu_0,\mu_1}$
are $I$-equivalent if no hyperplane $H_{i,j}$, $(i,j) \in I$,
separates the sets $T_{\mu_0,\mu_1}[w_0]$ and $T_{\mu_0,\mu_1}[w_1]$.
In particular, for $I = \varnothing$ all words are equivalent.
Let $E_I$ be a set of $I$-equivalent classes,
so that $W \in E_I$ is a set of words
and $T_{W,I} = \bigsqcup_{w_0 \in W} T_{\mu_0,\mu_1}[w_0]
\subseteq T_{\mu_0,\mu_1}$ is a convex polytope.
For $I = \varnothing$ we have $E_I = \{ M^\bullet_{\mu_0,\mu_1} \}$
and $T_{M^\bullet_{\mu_0,\mu_1},\varnothing} = T_{\mu_0,\mu_1}$.
The $I$-stratification is $T_{\mu_0,\mu_1} = \bigsqcup_{W \in E_I} T_{W,I}$.
Define
$\cM_{\mu_0,\mu_1}[W,I] = \tau^{-1}[T_{W,I}] \subseteq \cM_{\mu_0,\mu_1}$
so that, for any $I \subseteq I_{\mu_0,\mu_1}$
we have $\cM_{\mu_0,\mu_1} = \bigsqcup_{W \in E_I} \cM_{\mu_0,\mu_1}[W,I]$.

We claim that for every $I \subseteq I_{\mu_0,\mu_1}$
and every $W \in E_I$ the set $\cM_{\mu_0,\mu_1}[W,I]$
is contractible.
The proof is by induction on
$\delta(I) = |I_{\mu_0,\mu_1} \smallsetminus I|$.
The basis of the induction is the case $\delta(I) = 0$,
which has already been discussed.
The case $\delta(I) = \mu_0\mu_1 = |I_{\mu_0,\mu_1}|$
corresponds to $I = \varnothing$:
the proof of this case completes the proof of the lemma.

For the induction step,
consider subsets $I_1 \subset I_0 \subseteq I_{\mu_0,\mu_1}$,
$I_0 \smallsetminus I_1 = \{(i_0,j_0)\}$,
and the stratifications corresponding to $I_0$ and $I_1$.
By hypothesis, every set $\cM_{\mu_0,\mu_1}[W,I_0]$ ($W \in E_{I_0}$)
is contractible.
Consider a set
$\cM_{\mu_0,\mu_1}[W,I_1]$, $W \in E_{I_1}$
and its finite dimensional counterpart $T_{W,I_1}$.
If $T_{W,I_1}$ is not crossed by $H_{i_0,j_0}$
then $T_{W,I_1} = T_{W,I_0}$
and therefore 
$\cM_{\mu_0,\mu_1}[W,I_1] = \cM_{\mu_0,\mu_1}[W,I_0]$
is contractible.
If $T_{W,I_1}$ \textit{is} crossed by $H_{i_0,j_0}$
then $W = W^{(-)} \sqcup W^{(0)} \sqcup W^{(+)}$
with $W^{(-)}, W^{(0)}, W^{(+)} \in E_{I_0}$
and $T_{W^{(0)},I_0} \subset H_{i_0,j_0}$.
Apply Lemma \ref{lemma:3contractible}
with
$\cH_0 = \cM_{\mu_0,\mu_1}[W^{(0)},I_0]$,
$\cH_{+} = \cM_{\mu_0,\mu_1}[W^{(+)},I_0]$,
$\cH_{-} = \cM_{\mu_0,\mu_1}[W^{(-)},I_0]$
to deduce that $\cH = \cM_{\mu_0,\mu_1}[W,I_1]$
is contractible.
This completes the induction step,
the proof of the claim and the proof of the lemma.
\end{proof}

The following result is now easy.

\begin{lemma}
\label{lemma:coho}
The transversal bundle to $\cM_{\mu_0,\mu_1}$ is trivial
and therefore oriented.
Counting intersections with $\cM_{\mu_0,\mu_1}$
(with sign) defines an element 
$m_{\mu_0,\mu_1}$ of the cohomology groups
$H^{2\mu_0 + 2\mu_1}(\cL_3;\ZZ) = H^{2\mu_0 + 2\mu_1}(\cD_3;\ZZ)$.
\end{lemma}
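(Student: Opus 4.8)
The plan is to realize $m_{\mu_0,\mu_1}$ as the image of a Thom class; write $k = 2\mu_0+2\mu_1$ throughout. First, the triviality and orientability of the transversal bundle is immediate: by Proposition~\ref{prop:submanifold} there is a transversal vector bundle $E \to \cM_{\mu_0,\mu_1}$ of rank $k$, and by Lemma~\ref{lemma:Mmu0mu1contractible} the base $\cM_{\mu_0,\mu_1}$ is contractible, so $E$ is trivial; fixing a trivialization in particular orients it. This transversal bundle provides a tubular neighborhood: an open set $U$ with $\cM_{\mu_0,\mu_1} \subset U \subseteq \cL_3$ and a homeomorphism $U \cong E \cong \cM_{\mu_0,\mu_1}\times\RR^k$ carrying $\cM_{\mu_0,\mu_1}$ to the zero section $\cM_{\mu_0,\mu_1}\times\{0\}$.

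Next I would build the class. Since $\cM_{\mu_0,\mu_1}$ is closed in $\cL_3$ (Proposition~\ref{prop:closed}), excising the closed set $\cL_3\smallsetminus U$ (which is disjoint from $\cM_{\mu_0,\mu_1}$) gives an isomorphism
\[ H^k(\cL_3,\cL_3\smallsetminus\cM_{\mu_0,\mu_1};\ZZ) \;\cong\; H^k(U,U\smallsetminus\cM_{\mu_0,\mu_1};\ZZ). \]
Under the product homeomorphism $U\cong\cM_{\mu_0,\mu_1}\times\RR^k$ and the Künneth theorem (using that $\cM_{\mu_0,\mu_1}$ is contractible), the right-hand side is canonically $H^k(\RR^k,\RR^k\smallsetminus\{0\};\ZZ)\cong\ZZ$; let $u$ be the generator determined by the chosen orientation of $E$, i.e.\ the Thom class. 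Define $m_{\mu_0,\mu_1}\in H^k(\cL_3;\ZZ)$ to be the image of $u$ under the natural map $H^k(\cL_3,\cL_3\smallsetminus\cM_{\mu_0,\mu_1};\ZZ)\to H^k(\cL_3;\ZZ)$ coming from the long exact sequence of the pair. Since $\cM_{\mu_0,\mu_1}$ is connected it lies in a single component $\cL_3(1;z_1)$ with $z_1=(-1)^{\mu_0+1}(\hat a\hat c)^{\mu_1+1}$ (Proposition~\ref{prop:M}), so $m_{\mu_0,\mu_1}$ is supported in that summand of $H^k(\cL_3;\ZZ)$.

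It remains to check that $m_{\mu_0,\mu_1}$ is the signed intersection count and to identify the two cohomology groups. For a map $f\colon\Ss^k\to\cL_3$ whose image meets $\cM_{\mu_0,\mu_1}$ topologically transversally — which can be arranged by a homotopy, working inside the finite-rank normal coordinates supplied by $U$ — the pairing $\langle m_{\mu_0,\mu_1},f_*[\Ss^k]\rangle$ is computed in the usual way: restrict $f$ near each intersection point $p$ to a map of pairs $(D^k,\partial D^k)\to(U,U\smallsetminus\cM_{\mu_0,\mu_1})$ and pull back $u$, obtaining $\sum_p\varepsilon(p)$ with $\varepsilon(p)=\pm1$ according to whether $f$ preserves or reverses the normal orientation at $p$. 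Homotopy invariance of the pairing shows this count depends only on the homotopy class of $f$, which is precisely the assertion that counting intersections with $\cM_{\mu_0,\mu_1}$ defines the cohomology class $m_{\mu_0,\mu_1}$. Finally, $H^k(\cL_3;\ZZ)=H^k(\cD_3;\ZZ)$ is immediate: by the results of \cite{Goulart-Saldanha-cw} recalled in Section~\ref{section:review}, the inclusion $i\colon\cD_3\to\cL_3$ is a homotopy equivalence, hence $i^*$ is an isomorphism in every degree. The only place where slight care is needed is that $\cM_{\mu_0,\mu_1}$ is a topological rather than a smooth submanifold, so "transversal" is meant in the topological sense; but this is the framework used throughout the paper, and the genuine finite-codimension tubular neighborhood from Proposition~\ref{prop:submanifold} makes the Thom-class argument go through unchanged, so there is no real obstacle.
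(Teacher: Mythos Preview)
Your proof is correct and follows essentially the same approach as the paper: triviality and orientability from contractibility (Lemma~\ref{lemma:Mmu0mu1contractible}), well-definedness of the intersection count from closedness (Proposition~\ref{prop:closed}), and the cohomology identification from the homotopy equivalence $\cD_3\simeq\cL_3$. The paper's own proof is considerably terser and does not spell out the Thom-class formalism, but your explicit construction via excision and the Thom class is exactly the standard way to make the phrase ``counting intersections defines a cohomology class'' precise, so the two arguments are the same in substance.
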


\begin{proof}
We are here implicitly using Proposition~\ref{prop:submanifold}.
The triviality (and orientability) of the transversal bundle
follows from the contractibility of $\cM_{\mu_0,\mu_1}$.
This allows us to count intersections with signs.
The fact that $\cM_{\mu_0,\mu_1} \subset \cL_3$
is a closed subset (Proposition~\ref{prop:closed})
implies that the number of intersections is invariant
under homotopy or homology,
allowing for the definition of 
$m_{\mu_0,\mu_1} \in H^{2\mu_0 + 2\mu_1}(\cL_3;\ZZ)$.
The equality between cohomology groups of
$\cL_3$ and $\cD_3$ follows from the fact
that these two spaces are homotopy equivalent.
\end{proof}

At this point we have not ruled out the possibility
that $m_{\mu_0,\mu_1}$ could be trivial.
The fact that it is, indeed, nontrivial
will follow from the construction of the map
$h_{\mu_0,\mu_1}$ in the next section
(see in particular item 4 of Lemma~\ref{lemma:alphadois} below).

\bigbreak

\section{The maps $h_{\mu_0,\mu_1}$ and
Proposition~\ref{prop:M}}
\label{section:maps}

Following \cite{Goulart-Saldanha-cw} and
Remark~\ref{remark:abcaccba} above,
let $Y_2 = \{a,b,c,ba,ab,bc\} \subset S_4$.
Let $\Ideal_{Y_2} \subset \Word_3$ be the set of words
containing at least one letter of $Y_2$.
Let $\cL_3[\Ideal_{Y_2}] \subset \cL_3$
be the set of curves with itinerary in $\Ideal_{Y_2}$.
The set $\Ideal_{Y_2}$ is an \textit{ideal}
and therefore $\cL_3[\Ideal_{Y_2}] \subset \cL_3$
is an open subset.
Similarly, let $\cD_3[\Ideal_{Y_2}] \subset \cD_3$
be the union of cells with labels in $\Ideal_{Y_2}$:
$\cD_3[\Ideal_{Y_2}]$ is a subcomplex of $\cD_3$.

\begin{lemma}
\label{lemma:alpha0}
Consider $(\mu_0,\mu_1) \in \NN^2$, $\mu_0 + \mu_1 > 0$.
Let 
\[ w_\bullet = [aba]\cdots[aba][cba]\cdots[cba] \in M^\ast \subset \Word_3 \]
with $\mu_0$ copies of $[aba]$ and $\mu_1$ copies of $[cba]$
so that $\mu(w_\bullet) = (\mu_0,\mu_1)$.
There exists a continuous map
$\alpha^0_{\mu_0,\mu_1}: \DD^{2\mu_0+2\mu_1} \to \cD_3$
with the following properties:
\begin{enumerate}
\item{The restriction of $\alpha^0_{\mu_0,\mu_1}$
to $\Ss^{2\mu_0+2\mu_1-1}$ assumes values in $\cD_3[\Ideal_{Y_2}]$.}
\item{The map $\alpha^0_{\mu_0,\mu_1}$ covers
the cell $c_{w_\bullet}$ exactly once, in an injective manner.}
\item{The map $\alpha^0_{\mu_0,\mu_1}$
covers no other cell $c_w$ for $w \in M^\ast$.}
\end{enumerate}
\end{lemma}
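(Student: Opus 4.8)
\medskip
\noindent\textbf{Plan of proof of Lemma~\ref{lemma:alpha0}.}
The idea is to assemble $\alpha^0_{\mu_0,\mu_1}$ out of two elementary ``building disks'' by means of the concatenation structure of the CW complex $\cD_3$. The first building disk is the closed cell $D_a = \overline{c_{[aba]}} \subset \cD_3$: by the analysis recalled in Section~\ref{section:closed} (see Figure~\ref{fig:ababcb}) it is a topological $2$-disk whose bounding circle is the union of the eight cells $c_{aa}$, $c_{bb}$, $c_{abab}$, $c_{baba}$, $c_{a[ba]}$, $c_{[ab]b}$, $c_{b[ab]}$, $c_{[ba]a}$, each carrying a letter of dimension $0$. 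The second building disk is $D_c = \overline{c_{[abc]}} \cup \overline{c_{[ac]}} \cup \overline{c_{[cba]}} \subset \cD_3$: by Remark~\ref{remark:abcaccba} (see Figure~\ref{fig:cbaabc}) it is also a topological $2$-disk, with interior $c_{[cba]} \sqcup c_{[ac]} \sqcup c_{[abc]}$ and with bounding circle a union of cells each carrying a letter of dimension $0$. The arithmetic I would exploit is this: among the cells of $D_a$ the only one with a label in $M^\ast$ is $c_{[aba]}$ (the other eight contain a letter of dimension $0$, and such letters are not in $M$), and among the cells of $D_c$ the only one with a label in $M^\ast$ is $c_{[cba]}$ (since $[ac] \notin M$ by construction, $[abc] \notin M$, and each bounding cell contains a letter of dimension $0$); moreover every cell of either bounding circle lies in $\Ideal_{[0]} \subseteq \Ideal_{Y_2}$.

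Next I would invoke the concatenation structure of $\cD_3$ from \cite{Goulart-Saldanha-cw} --- the dual-complex counterpart of the local product decomposition of $\cL_3$ near a curve with several instants of non-transversality recalled in Remark~\ref{remark:neighborhood}: for a word $v = \tau_1 \cdots \tau_\ell$ there is a continuous concatenation map $\kappa_v \colon \overline{c_{\tau_1}} \times \cdots \times \overline{c_{\tau_\ell}} \to \cD_3$ which carries each product of open subcells $c_{\tau_1'} \times \cdots \times c_{\tau_\ell'}$ (with $c_{\tau_i'} \subseteq \overline{c_{\tau_i}}$) homeomorphically onto the open cell $c_{\tau_1' \cdots \tau_\ell'}$, and these maps are compatible as $v$ varies. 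Writing $w_\bullet = [aba] \cdots [aba] [cba] \cdots [cba]$ ($\mu_0$ letters $[aba]$, then $\mu_1$ letters $[cba]$), I would set
\[
\alpha^0_{\mu_0,\mu_1} = \kappa \circ (g_1 \times \cdots \times g_{\mu_0+\mu_1}) \colon
\DD^{2\mu_0+2\mu_1} \cong (\DD^2)^{\mu_0} \times (\DD^2)^{\mu_1} \longrightarrow \cD_3 ,
\]
where $g_i \colon \DD^2 \to D_a$ is a characteristic map of the $2$-cell $c_{[aba]}$ for $1 \le i \le \mu_0$, $g_i \colon \DD^2 \to D_c$ is a homeomorphism onto the $2$-disk $D_c$ for $\mu_0 < i \le \mu_0+\mu_1$, and $\kappa$ is the concatenation map of $\cD_3$ (assembled from the $\kappa_v$ of the three closed cells making up $D_c$). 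For $\mu_0+\mu_1 = 1$ no concatenation is needed: $\alpha^0_{1,0}$ is a characteristic map of $c_{[aba]}$ and $\alpha^0_{0,1}$ is one for the $2$-disk $D_c$.

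Finally I would verify the three properties. By construction the image of $\alpha^0_{\mu_0,\mu_1}$ is the union of the open cells $c_{\rho_1 \cdots \rho_{\mu_0+\mu_1}}$, where $\rho_i$ ranges over the labels of the cells of $D_a$ for $i \le \mu_0$ and over the labels of the cells of $D_c$ for $i > \mu_0$. Property~(3): if such a word lies in $M^\ast$ then every $\rho_i$ lies in $M^\ast$, hence $\rho_i = aba$ for $i \le \mu_0$ and $\rho_i = cba$ for $i > \mu_0$ by the observation above, so the word equals $w_\bullet$; thus $c_{w_\bullet}$ is the only cell with label in $M^\ast$ met by $\alpha^0_{\mu_0,\mu_1}$. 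Property~(2): the preimage of $c_{w_\bullet}$ is $(\mathring{\DD}{}^2)^{\mu_0} \times V_{\mu_0+1} \times \cdots \times V_{\mu_0+\mu_1}$ with $V_i = g_i^{-1}[c_{[cba]}] \subset \mathring{\DD}{}^2$ an open subset, an open $(2\mu_0+2\mu_1)$-cell, and on it $\alpha^0_{\mu_0,\mu_1}$ --- being a composite of homeomorphisms of open cells --- restricts to a homeomorphism onto $c_{w_\bullet}$, so $c_{w_\bullet}$ is covered exactly once in an injective manner. Property~(1): a point of $\Ss^{2\mu_0+2\mu_1-1} = \partial \DD^{2\mu_0+2\mu_1}$ has at least one coordinate in $\partial \DD^2$, and for that index $g_i$ sends it into the bounding circle of $D_a$ or of $D_c$, i.e. into a cell with a letter of dimension $0$; hence the image word has a letter of dimension $0$ and $\alpha^0_{\mu_0,\mu_1}(\Ss^{2\mu_0+2\mu_1-1}) \subset \cD_3[\Ideal_{[0]}] \subseteq \cD_3[\Ideal_{Y_2}]$.

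The genuinely delicate point is the second step: making precise and justifying the concatenation structure of $\cD_3$ --- that the maps $\kappa_v$ exist, are continuous, are cellwise homeomorphisms, and glue compatibly (in particular over the three closed cells of $D_c$) --- starting from the construction of $\cD_3$ and its glueing data in \cite{Goulart-Saldanha-cw} and reconciling it with the local product decomposition of $\cL_3$ of Remark~\ref{remark:neighborhood}. Once that is in place, everything else is bookkeeping with the cell lists of $D_a$ and $D_c$ from Figures~\ref{fig:ababcb} and \ref{fig:cbaabc} together with the facts $[ac] \notin M$ and $\Ideal_{[0]} \subseteq \Ideal_{Y_2}$.
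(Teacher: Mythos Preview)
Your proposal is correct and follows essentially the same approach as the paper: build $\alpha^0_{1,0}$ as a characteristic map for the closed $2$-cell $\overline{c_{[aba]}}$, build $\alpha^0_{0,1}$ as a homeomorphism onto the $2$-disk $c_{[abc]} \cup c_{[ac]} \cup c_{[cba]}$ (Remark~\ref{remark:abcaccba}), and obtain the general map as the product (concatenation) of $\mu_0$ copies of the first followed by $\mu_1$ copies of the second. The paper is in fact terser than you are about the product step --- it simply says ``taking the cartesian product'' --- so your explicit discussion of the concatenation map $\kappa_v$ and your flagging of its continuity and cellwise-homeomorphism properties as the point requiring care is, if anything, more scrupulous than the original.
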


\begin{remark}
\label{remark:alpha00}
The case $\mu_0 = \mu_1 = 0$ is left out due to its 
trivial and somewhat degenerate nature.
In order to extend the above result to this case
we should have $\DD^0$ be a set with a single point,
taken by $\alpha^0_{0,0}$ to $c_{()}$,
the isolated $0$-cell associated with the empty word.
In $\cL_3$, this cell corresponds
to the contractible connected component of convex curves,
which indeed have empty itinerary.
\end{remark}

\begin{proof}[Proof of Lemma \ref{lemma:alpha0}]
The map $\alpha^0_{1,0}$ is shown in Figure~\ref{fig:ababcb}
(left part).
More precisely, take $\DD^2$ injectively to $c_{[aba]}$,
so that its boundary is taken to the cells corresponding
to the eight cells (of dimensions $0$ and $1$)
corresponding to the eight words listed along the boundary.
Notice that all eight words belong to $\Ideal_{Y_2}$.
In other words, the boundary is taken to $\cD_3[\Ideal_{Y_2}]$.

The map $\alpha^0_{0,1}$ is shown in Figure~\ref{fig:cbaabc}.
The disk $\DD^2$ is taken injectively
to the disk $c_{[abc]} \cup c_{[ac]} \cup c_{[cba]} \subset \cD_3$.
Recall that $[cba] \in M$ but $[abc], [ac] \notin M$. 
As discussed in Remark~\ref{remark:abcaccba},
its boundary is taken to $\cD_3[\Ideal_{Y_2}]$.

The map $\alpha^0_{\mu_0,\mu_1}$ is constructed
by taking the cartesian product of $\mu_0$ copies of $\alpha^0_{1,0}$
followed by $\mu_1$ copies of $\alpha^0_{0,1}$.
The boundary is clearly taken to $\cD_3[\Ideal_{Y_2}]$.
\end{proof}

\begin{remark}
\label{remark:alpha0}
In Lemma \ref{lemma:alpha0} we choose to construct a map
assuming values in the CW complex $\cD_3$.
The duality (discussed in \cite{Goulart-Saldanha-cw})
between $\cD_3$ and $\cL_3$ tells us that
a similar construction can be performed in $\cL_3$.
For some parts of the construction
(particularly the last paragraph of the proof of Lemma \ref{lemma:alpha0})
working in $\cL_3$ intruduces unneeded technicalities
(thus our preference for $\cD_3$).
A brief discussion of the route not taken
does however help to clarify the situation.

%  \begin{figure}[ht]
%  \def\svgwidth{75mm}
%  \centerline{\input{aba.pdf_tex}}
%  %\centerline{\includegraphics[width =10cm]{aba.pdf}}
%  \caption{A family of curves in $\Ss^2$ and their itineraries.}
%  \label{fig:aba-s3}
%  \end{figure}

The map $\tilde\alpha^0_{1,0}: \DD^2 \to \cL_3$ 
(dual to $\alpha^0_{1,0}$) is obtained by taking 
a curve in $\cM_{1,0}$ and perturbing it.
Indeed, start with a curve of itinerary $[aba]$
(to be $\tilde\alpha^0_{1,0}(0)$) and perturb it.
Figure~\ref{fig:ababcb}
shows us the resulting itineraries:
the boundary indicates the itineraries.
Notice that since we started with a curve of itinerary $[aba]$,
we are in the part where $\cM_{1,0}$ is a manifold.
A similar construction is performed in \cite{Saldanha3}
for the easier case of $\cL_2$ (curves in $\Ss^2$):
Figure~1 in \cite{Goulart-Saldanha1}
shows the resulting map (with itineraries).

% Figure~\ref{fig:aba-s3}

% \cite{Alves-Saldanha} discusses how to go from $\Ss^2$ to $\Ss^3$.

Consider next the case $\mu_0 = 0$, $\mu_1 = 1$.
Start with a curve of itinerary $[cba]$ and perturb it.
The top of Figure \ref{fig:cbaabc} shows us
the resulting itineraries.
This includes the word $[ac]$, so is not satisfactory.
We can however use the bottom of the same figure as a guide:
take the perturbation further so as to
include in the image of the interior of $\DD^2$
a curve with itinerary $[abc]$.
Now the image of $\Ss^1$, the boundary of $\DD^2$,
has the itineraries in the boundary of the whole figure.
It would be interesting to draw the corresponding maps.
\end{remark}

\begin{remark}
\label{remark:alpha00b}
It would also be interesting to describe the maps
discussed in Remark \ref{remark:alpha00}
in the notation of \cite{Alves-Saldanha}.
A better understanding of Bruhat cells (and of itineraries)
in that notation is clearly necessary.
As mentioned in Remark \ref{remark:AS},
translating concepts into the notation of 
\cite{Alves-Saldanha} appears to be nontrivial.
\end{remark}

%  as discussed in Remark~\ref{remark:abcaccba},
%  these itineraries are in $\Ideal_{Y_2}$.

%  Finally, consider arbitrary $\mu_0$ and $\mu_1$.
%  Start with a curve with itinerary $[aba]\cdots[aba][cba]\cdots[cba]$
%  and perturb as above.

\begin{lemma}
\label{lemma:alpha1}
Consider $(\mu_0,\mu_1) \in \NN^2$, $\mu_0 + \mu_1 > 0$,
and the map $\alpha^0_{\mu_0,\mu_1}: \DD^{2\mu_0 + 2\mu_1} \to \cD_3$
constructed in Lemma \ref{lemma:alpha0}.
There exists a continuous map
$\alpha^1_{\mu_0,\mu_1}: [0,1] \times \partial\DD^{2\mu_0 + 2\mu_1} \to
\cD_3[\Ideal_{Y_2}]$ such that,
for all $s \in \partial\DD^{2\mu_0 + 2\mu_1}$,
$\alpha^1_{\mu_0,\mu_1}(0,s) = \alpha^0_{\mu_0,\mu_1}$
and
$\alpha^1_{\mu_0,\mu_1}(1,s) = aaaa \alpha^0_{\mu_0,\mu_1}$.
\end{lemma}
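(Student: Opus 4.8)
The plan is to interpret ``$aaaa\,\alpha^0_{\mu_0,\mu_1}$'' as the composite of $\alpha^0_{\mu_0,\mu_1}$ with the operation $p$ of prepending the letter‑word $aaaa\in\Word_3$ to every cell label, and then to obtain the homotopy $\alpha^1_{\mu_0,\mu_1}$ by transporting, along the equivalences relating $\cD_3$, $\cL_3$ and $\Omega\Spin_4$, a triviality that is evident on the loop‑space side. First I would record why the word $aaaa$ is used. Since $\hat a^2=(-1,-1)=-1$, the element $\hat a$ has order $4$ in $\Quat_4$ and $\widehat{aaaa}=\hat a^{4}=1$; this is exactly the point, because prepending $a$ or $aa$ would change the endpoint, whereas prepending $aaaa$ does not, so $p\circ\alpha^0_{\mu_0,\mu_1}$ again takes values in the connected component $\cD_3'$ of $\cD_3$ that contains $c_{w_\bullet}$ and lies over $\cL_3(1;z_1)$. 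Moreover every word $aaaa\,w$ contains the letter $a\in Y_2$, so the whole disk $p\circ\alpha^0_{\mu_0,\mu_1}(\DD^{2\mu_0+2\mu_1})$ is contained in $\cD_3[\Ideal_{Y_2}]\cap\cD_3'$, and $(p\circ\alpha^0_{\mu_0,\mu_1})|_{\partial\DD^{2\mu_0+2\mu_1}}$ is the map denoted $aaaa\,\alpha^0_{\mu_0,\mu_1}$ in the statement. (That $p$ is a genuine continuous cellular operation on the part of $\cD_3$ met by $\alpha^0_{\mu_0,\mu_1}$ and its faces follows from the translation invariance of the orders $\preceq$ and $\sqsubseteq$, which are computable for $n=3$; equivalently, and perhaps more safely, one prepends in $\cL_3$, where the operation is concatenation with a fixed short convex arc of itinerary $aaaa$, and then dualises, cf.\ Remark~\ref{remark:alpha0}.)

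Next I would note that both $\alpha^0_{\mu_0,\mu_1}|_{\partial\DD^{2\mu_0+2\mu_1}}$ and $aaaa\,\alpha^0_{\mu_0,\mu_1}$ are null‑homotopic in $\Omega\Spin_4(1;z_1)$: composing with $i\colon\cD_3\to\cL_3$ and with the inclusion $\cL_3(1;z_1)\subseteq\Omega\Spin_4(1;z_1)$, the first extends over $\DD^{2\mu_0+2\mu_1}$ through $i\circ\alpha^0_{\mu_0,\mu_1}$, and the second through $i\circ p\circ\alpha^0_{\mu_0,\mu_1}$. I would then invoke the weak homotopy equivalence $\cD_3[\Ideal_{Y_2}]\cap\cD_3'\xrightarrow{\ \sim\ }\Omega\Spin_4(1;z_1)$ obtained by composing the restriction of $i$ to the subcomplex $\cD_3[\Ideal_{Y_2}]$ (legitimate since $\Ideal_{Y_2}$ is an ideal) with the weak equivalence $\cL_3[\Ideal_{Y_2}]\cap\cL_3(1;z_1)\hookrightarrow\Omega\Spin_4(1;z_1)$, a refinement of Fact~\ref{fact:tildeY} proved in \cite{Goulart-Saldanha-cw} (its Proposition~13.2). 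As $\Ss^{2\mu_0+2\mu_1-1}$, $\cD_3[\Ideal_{Y_2}]\cap\cD_3'$ and $\Omega\Spin_4(1;z_1)$ all have the homotopy type of CW complexes, this equivalence induces a bijection on (free) homotopy classes of maps out of $\Ss^{2\mu_0+2\mu_1-1}$; since $\alpha^0_{\mu_0,\mu_1}|_{\partial\DD}$ and $aaaa\,\alpha^0_{\mu_0,\mu_1}$ hit the same (null) class downstairs, they represent the same class in $[\Ss^{2\mu_0+2\mu_1-1},\cD_3[\Ideal_{Y_2}]\cap\cD_3']$, hence are homotopic inside $\cD_3[\Ideal_{Y_2}]\cap\cD_3'\subseteq\cD_3[\Ideal_{Y_2}]$. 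Reparametrising such a homotopy on $[0,1]\times\partial\DD^{2\mu_0+2\mu_1}$ (and reversing its direction if needed) gives $\alpha^1_{\mu_0,\mu_1}$ with the prescribed values at $0$ and $1$.

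The main obstacle I expect is the weak equivalence $\cL_3[\Ideal_{Y_2}]\cap\cL_3(1;q)\hookrightarrow\Omega\Spin_4(1;q)$ used above: Fact~\ref{fact:tildeY} supplies this only for the larger open set $\tilde\cY_3=\cL_3\smallsetminus\cM$, and here one really needs the sharper statement for the $Y_2$‑ideal (the strictly smaller set of curves whose itinerary contains one of $a,b,c,ba,ab,bc$), which is the content of the $\Ideal_{Y_2}$ analysis of \cite{Goulart-Saldanha-cw}. A lesser technical point, noted above, is the well‑definedness of ``prepend $aaaa$'' on $\cD_3$ given that the glueing of $\cD_3$ is only partially understood; this is the reason to carry out the prepending in $\cL_3$ and pass to $\cD_3$ only at the end.

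Finally, for intuition and as a self‑contained alternative to the transport argument, one can make the homotopy explicit in $\cL_3$: realise $aaaa\,\alpha^0_{\mu_0,\mu_1}$ as the concatenation of $i\circ\alpha^0_{\mu_0,\mu_1}$ with a fixed locally convex loop $\ell$ at $1\in\Spin_4$ of itinerary $aaaa$ --- a closed‑up perturbation of $\theta\mapsto\alpha_1(\theta)$, $\theta\in[0,4\pi]$, which closes up precisely because $\alpha_1(4\pi)=\hat a^{4}=1$ --- and then contract $\ell$ through paths from $1$ to $1$ in $\Spin_4$, which is possible since $\Spin_4=\Ss^3\times\Ss^3$ is simply connected. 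Throughout this contraction every curve in the family still carries in its itinerary the letter of $Y_2$ already present in $\iti\bigl(i\circ\alpha^0_{\mu_0,\mu_1}(s)\bigr)$ for $s\in\partial\DD^{2\mu_0+2\mu_1}$ (item~1 of Lemma~\ref{lemma:alpha0}), so the family stays in $\cL_3[\Ideal_{Y_2}]$; transporting back through $i$ yields the desired homotopy in $\cD_3[\Ideal_{Y_2}]$.
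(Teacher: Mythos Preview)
Your main argument (the first two paragraphs) is correct and is essentially the paper's approach: the paper's proof is a one-line invocation of Proposition~13.2 in \cite{Goulart-Saldanha-cw}, which directly asserts that any $f_0:\Ss^k\to\cD_3[\Ideal_{Y_2}]$ is homotopic in $\cD_3[\Ideal_{Y_2}]$ to $aaaa\,f_0$. You quote the same proposition but read it as the weak equivalence $\cD_3[\Ideal_{Y_2}]\cap\cD_3'\simeq\Omega\Spin_4(1;z_1)$ and then re-derive the homotopy by observing that both boundary maps bound disks and hence die in $\Omega\Spin_4$. That is a perfectly valid unpacking; the paper simply cites the packaged conclusion.

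Your ``self-contained alternative'' in the last paragraph, however, does not work as written. When you contract the loop $\ell$ through arbitrary paths from $1$ to $1$ in $\Spin_4$, the concatenated curves are no longer locally convex (the prepended piece is an arbitrary continuous path), so they do not lie in $\cL_3$ at all and have no itinerary. The sentence ``throughout this contraction every curve in the family still carries in its itinerary the letter of $Y_2$ already present'' is therefore meaningless: itineraries are only defined for curves in $\cL_3$, and the point of the whole theory is precisely that one cannot freely homotope inside $\cL_3$ the way one can in $\Omega\Spin_4$. If you try to stay inside $\cL_3$ by contracting $\ell$ through \emph{locally convex} loops at $1$, you are asking that $\cL_3(1;1)$ be simply connected in a strong sense, which is exactly what is at stake and not yet available. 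So the alternative should be discarded; your transport argument through Proposition~13.2 is the right one, and it is what the paper does.
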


\begin{remark}
\label{remark:alpha1}
Also here we can find a simpler version of the construction
for the case $n = 2$ in \cite{Saldanha3}.
Indeed, Figure~6 in \cite{Saldanha3} is essentially a drawing
of the map $\alpha^1_1: [0,1] \times \Ss^1 \to \cL_2$.
\end{remark}

\begin{proof}[Proof of Lemma~\ref{lemma:alpha1}]
Indeed,
we know from Proposition 13.2 in \cite{Goulart-Saldanha-cw}
that any continuous map $f_0: \Ss^k \to \cD_3[\Ideal_{Y_2}]$
is homotopic in $\cD_3[\Ideal_{Y_2}]$ to
$f_1 = aaaaf_0: \Ss^k \to \cD_3[\Ideal_{Y_2}]$.
It suffices to apply this result to $f_0 = \alpha^0|_{\partial\DD}$.
\end{proof}

\begin{lemma}
\label{lemma:alphadois}
Consider $(\mu_0,\mu_1) \in \NN^2$, $\mu_0 + \mu_1 > 0$,
and the maps $\alpha^i_{\mu_0,\mu_1}$, $i \in \{0,1\}$,
constructed in Lemmas \ref{lemma:alpha0} and \ref{lemma:alpha1}.
Let $D = \DD^{2\mu_0 + 2\mu_1} \subset \RR^{2\mu_0 + 2\mu_1}$ and
$S = \partial([0,4] \times D) \subset \RR^{1+2\mu_0 + 2\mu_1}$,
a topological sphere of dimension $2\mu_0 + 2\mu_1$.
There exists a continuous map
$\alpha^{2}_{\mu_0,\mu_1}: S \to \cD_3$ with the following properties:
\begin{enumerate}
\item{The maps $\alpha^0$ and $\alpha^{2}$ coincide on
$\{0\} \times D$ (we identify $D$ and $\{0\} \times D$).}
\item{The maps  $\alpha^1$ and $\alpha^{2}$ coincide on
$[0,1] \times \partial D$.}
\item{The map $\alpha^{2}$ assumes values in $\cD_3[\Ideal_{Y_2}]$
on $S \smallsetminus (\{0\} \times D)$.}
\item{Let $m_{\mu_0,\mu_1} \in H^{2\mu_0 + 2\mu_1}(\cD_3;\ZZ)$
be as in Lemma~\ref{lemma:coho}.
Consider the pullback
$(\alpha^{2})^{\ast}(m_{\mu_0,\mu_1}) \in 
H^{2\mu_0 + 2\mu_1}(S;\ZZ) = \ZZ$:
this is a generator.
In particular, the map $\alpha^{2}: S \to \cD_3$
is not homotopic to a constant.}
\item{The map $aaaa\alpha^{2}: S \to \cD_3$
is homotopic to a constant.}
\end{enumerate}
\end{lemma}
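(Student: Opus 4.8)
The plan is to build $\alpha^2_{\mu_0,\mu_1}$ piece by piece on the four faces of $\partial([0,4]\times D)$, using the maps already constructed. Write $S = (\{0\}\times D) \cup ([0,4]\times\partial D) \cup (\{4\}\times D)$. First I would set $\alpha^2 = \alpha^0_{\mu_0,\mu_1}$ on $\{0\}\times D$, which forces item~1. On $[0,1]\times\partial D$ I would set $\alpha^2 = \alpha^1_{\mu_0,\mu_1}$, which gives item~2 and is consistent with the previous choice at $\{0\}\times\partial D$ since $\alpha^1(0,\cdot)=\alpha^0|_{\partial D}$ by Lemma~\ref{lemma:alpha1}. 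This already lands in $\cD_3[\Ideal_{Y_2}]$ on $[0,1]\times\partial D$, consistent with item~3. It remains to define $\alpha^2$ on the complementary region $([1,4]\times\partial D)\cup(\{4\}\times D)$, which is a disk $D'$ of dimension $2\mu_0+2\mu_1$ whose boundary $\{1\}\times\partial D$ already carries the map $aaaa\,\alpha^0|_{\partial D}$ (the $s=1$ end of $\alpha^1$). The point is that $aaaa\,\alpha^0|_{\partial D}\colon\partial D\to\cD_3[\Ideal_{Y_2}]$ extends over $D'$ \emph{inside} $\cD_3[\Ideal_{Y_2}]$: indeed $\alpha^0|_{\partial D}$ already extends over $D$ (namely by $\alpha^0$ itself), so $aaaa\,\alpha^0|_{\partial D}$ extends over $D$ by $aaaa\,\alpha^0$; but $aaaa$ of any cell in $\Ideal_{Y_2}$ stays in $\cD_3[\Ideal_{Y_2}]$, and more to the point the composite $aaaa\,\alpha^0\colon D\to\cD_3$ — when restricted to the interior — hits the cell $c_{w_\bullet}$ only after prepending $aaaa$, hence with itinerary containing the letter $a\in Y_2$, so it lands in $\cD_3[\Ideal_{Y_2}]$ throughout. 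Thus I can take $\alpha^2$ on $D'$ to be (a reparametrization of) $aaaa\,\alpha^0$; this gives item~3 on all of $S\smallsetminus(\{0\}\times D)$, and also records, for later use, that $\alpha^2|_{\{4\}\times D} = aaaa\,\alpha^0$.

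For item~4, the key observation is that $m_{\mu_0,\mu_1}$ counts transversal intersections with $\cM_{\mu_0,\mu_1}$, and by Proposition~\ref{prop:M}(3) (equivalently, by the construction via $\alpha^0$) the cell $c_{w_\bullet}$ is the unique cell of $\cM^\ast$ of the right codimension met by $\alpha^0_{\mu_0,\mu_1}$, met exactly once and transversally; on all other faces of $S$ the map $\alpha^2$ lands in $\cD_3[\Ideal_{Y_2}]$, which is disjoint from $\cM$ (every word in $\Ideal_{Y_2}$ contains a letter of $Y_2\not\subset M$), hence contributes nothing to the intersection count. Therefore $\langle (\alpha^2)^\ast m_{\mu_0,\mu_1}, [S]\rangle = \pm 1$, so $(\alpha^2)^\ast m_{\mu_0,\mu_1}$ is a generator of $H^{2\mu_0+2\mu_1}(S;\ZZ)\cong\ZZ$, and in particular $\alpha^2$ is not null-homotopic. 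This is also the place where we finally learn, a posteriori, that $m_{\mu_0,\mu_1}$ itself is nontrivial.

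For item~5, I would argue that $aaaa\,\alpha^2\colon S\to\cD_3$ takes values in $\cD_3[\Ideal_{Y_2}]$: on $\{0\}\times D$ we get $aaaa\,\alpha^0$, which lands in $\cD_3[\Ideal_{Y_2}]$ by the remark above (prepending $a$ puts the letter $a\in Y_2$ into every itinerary), and on the rest of $S$ the map $\alpha^2$ already lands there, so applying $aaaa$ keeps it there. Thus $aaaa\,\alpha^2$ factors through $\cD_3[\Ideal_{Y_2}]$, and it suffices to show it is null-homotopic \emph{there}. But $aaaa\,\alpha^2|_{\{0\}\times D} = aaaa\,\alpha^0$ is an extension over the disk $\{0\}\times D$ of $aaaa\,\alpha^0|_{\partial D}$, and on the complementary disk $D' = S\smallsetminus(\{0\}\times D)$ the map $aaaa\,\alpha^2$ restricts to $aaaa$ of (a reparametrization of) the same $aaaa\,\alpha^0$; gluing these two extensions of $aaaa\,\alpha^0|_{\partial D}$ along $\partial D$ gives a map $S\to\cD_3[\Ideal_{Y_2}]$ that is visibly null-homotopic, since both halves are restrictions of maps defined on $D$ and one can homotope one half to the other across $\cD_3[\Ideal_{Y_2}]$ using Proposition~13.2 of \cite{Goulart-Saldanha-cw} (the same input used in Lemma~\ref{lemma:alpha1}), collapsing $S$ to a point. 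The cleanest phrasing is: $aaaa\,\alpha^2$ extends to a map of $[0,4]\times D$ into $\cD_3[\Ideal_{Y_2}]$ — namely $aaaa\,\alpha^1$ on $[0,1]\times D$ suitably completed — and any map of a sphere that bounds a disk in the target is null-homotopic.

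The main obstacle is item~5, specifically checking carefully that the gluing of the two disk-extensions of $aaaa\,\alpha^0|_{\partial D}$ really does bound in $\cD_3[\Ideal_{Y_2}]$ rather than merely in $\cD_3$; this is where one must use that $aaaa$ of $\alpha^0$ — including the interior point mapping to $c_{w_\bullet}$ — genuinely lands in the subcomplex $\cD_3[\Ideal_{Y_2}]$, i.e. that prepending $a$ to $w_\bullet$ produces a word in $\Ideal_{Y_2}$, together with the homotopy-theoretic input from Proposition~13.2 of \cite{Goulart-Saldanha-cw}. Everything else — items~1, 2, 3 and the intersection count in item~4 — is bookkeeping about how the three pieces $\alpha^0$, $\alpha^1$, $aaaa\,\alpha^0$ fit together on the four faces of $\partial([0,4]\times D)$.
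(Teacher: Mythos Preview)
Your construction of $\alpha^2$ is essentially what the paper calls $\beta$: on the bottom $\{0\}\times D$ you place $\alpha^0$, on the collar $[0,1]\times\partial D$ you place $\alpha^1$, and on the remaining disk $D'$ you fill with (a reparametrization of) $aaaa\,\alpha^0$. Items 1--4 go through for this map exactly as you say. The gap is in item~5.

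Your argument that $aaaa\beta$ is null-homotopic comes down to the claim that the two disk-fillings of $aaaa\,\alpha^0|_{\partial D}$ --- namely $aaaa\,\alpha^0$ on one hemisphere and the concatenation of $aaaa\,\alpha^1$ with $aaaaaaaa\,\alpha^0$ on the other --- are homotopic \emph{rel boundary} in $\cD_3[\Ideal_{Y_2}]$. But that is exactly the statement $[aaaa\beta]=0$ you are trying to prove, so the argument is circular. Proposition~13.2 of \cite{Goulart-Saldanha-cw} only asserts $[f]=[aaaaf]$ for sphere-maps $f$ into $\cD_3[\Ideal_{Y_2}]$; applied to $f=aaaa\beta$ this yields $[aaaa\beta]=[aaaaaaaa\beta]$, which does not give $[aaaa\beta]=0$. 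Your ``cleanest phrasing'' also fails: $aaaa\,\alpha^1$ is only defined on $[0,1]\times\partial D$, not on $[0,1]\times D$, so it does not supply a filling of the solid cylinder.

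The paper repairs this by constructing $\alpha^2$ differently, and the interval $[0,4]$ is used in an essential way. On $[0,2]\times\partial D\cup\{0\}\times D$ the paper places (a coned-off copy of) $\beta$; on $[2,4]\times\partial D\cup\{4\}\times D$ it places $aaaa\beta$ precomposed with an orientation-reversing reflection $R$ of $D$, so that the top face is $aaaaaaaa\,\alpha^0\circ R$ rather than $aaaa\,\alpha^0$. This choice forces
\[
[\alpha^2] \;=\; [\beta] - [aaaa\beta]
\quad\text{in }\pi_{2\mu_0+2\mu_1}(\cD_3),
\]
and hence $[aaaa\,\alpha^2] = [aaaa\beta] - [aaaaaaaa\beta]$. \emph{Now} looseness of $aaaa\beta$ (Lemma~10.1 in \cite{Goulart-Saldanha-cw}) gives $[aaaa\beta]=[aaaaaaaa\beta]$, whence $[aaaa\,\alpha^2]=0$. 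The reflection $R$ is the missing idea in your approach.
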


\begin{remark}
\label{remark:loose}
The last two items in the statement of Lemma~\ref{lemma:alphadois}
are related to the concept of \textit{loose} maps,
as discussed in several previous papers,
starting with \cite{Saldanha3} and,
more recently and with more generality,
Section~10 of \cite{Goulart-Saldanha-cw}.
A map $\alpha: \Ss^k \to \cD_3$ is \textit{loose}
if and only if $\alpha$ is homotopic to $aaaa\alpha$
(Lemma~10.1 in \cite{Goulart-Saldanha-cw}).
The last two items thus imply that $\alpha^{2}$ is not loose.
The map $aaaa\alpha^{2}$ is loose.
\end{remark}

\begin{proof}[Proof of Lemma~\ref{lemma:alphadois}]
We construct the map $\alpha^{2}$ explicitly
and then prove the desired properties.
The first two items already give us $\alpha^{2}$
at the bottom $\{0\} \times D$
and at the first level of the wall,
$[0,1] \times \partial D$.

For $(h,s) \in [1,2] \times \partial D$
(so that $h \in [1,2]$ and $s \in \partial D$), 
define $\alpha^{2}((h,s)) = aaaa\alpha^0((2-h)s)$.
Notice that the map thus constructed is indeed continuous
at points $(1,s)$ 
(where both definitions give $aaaa\alpha^0(s)$).
Notice furthermore that
$\alpha^{2}((2,s)) = aaaa\alpha^0(0)$ (for all $s \in \partial D$).

Let $R$ be the orthogonal reflection on a hyperspace
in $\RR^{2\mu_0 + 2\mu_1}$
so that $R: D \to D$ is orientation reversing
and $R: \partial D \to \partial D$ has degree $-1$. 
For $(h,s) \in [2,3] \times \partial D$
define $\alpha^{2}((h,s)) = aaaa\alpha^0(R((h-2)s))
= \alpha^2(4-h,R(s))$.
For $(h,s) \in [3,4] \times \partial D$
define $\alpha^{2}((h,s)) = aaaa\alpha^1((h-3,R(s)))
= aaaa\alpha^2(h-3,R(s))$.
Finally, for $s \in D$ define
$\alpha^{2}((4,s)) = aaaaaaaa\alpha^0(R(s))$.
This completes the construction of the continuous map $\alpha^{2}$.
Its continuity is easily verified,
as are the first three items in the statement.
We are left with checking the last two items.

Let $i: \cD_3 \to \cL_3$ be the inclusion map
constructed in \cite{Goulart-Saldanha-cw} (especially in Theorem~4).
For item 4, notice that it follows from items (1) and (3)
that the map $i \circ \alpha^{2}: S \to \cL_3$
intersects $\cM_{\mu_0,\mu_1}$ precisely once at $(0,0) \in S$,
and that intersection is transversal.

For item (5),
we have to verify that $[aaaa\alpha^{2}]$ is zero
in the homotopy group
$G = \pi_{2\mu_0+2\mu_1}(\cD_3;\alpha^0(0))$
(which is abelian).
Since the connected components of $\cD_3$ (and of $\cL_3$)
are known to be simply connected
(from Theorem 2 in \cite{Goulart-Saldanha-cw}),
the base point is not an issue.

Let $S_1 = \partial([0,1] \times D)$.
Let $\beta: S_1 \to \cD_3$ coincide with $\alpha^{2}$
on the bottom and on the first level of the wall
and be defined on the top $\{1\} \times D$ by
$\beta((1,s)) = aaaa\alpha^0(s)$.
By construction,
$[\alpha^{2}] = [\beta] - [aaaa\beta] \in G$
and therefore
$[aaaa\alpha^{2}] = [aaaa\beta] - [aaaaaaaa\beta] \in G$.
The map $aaaa\alpha^{2}$ is clearly loose
(for instance, from Lemma~10.3 in \cite{Goulart-Saldanha-cw}):
from Lemma~10.1 in \cite{Goulart-Saldanha-cw}
we have $[aaaa\beta] = [aaaaaaaa\beta] \in G$.
This implies $[aaaa\alpha^{2}] = 0$, as desired.
\end{proof}

We are finally ready to prove Proposition~\ref{prop:M}.

\begin{proof}[Proof of Proposition~\ref{prop:M}]
The value of $z_1 \in \Quat_4$ such that
$\cM_{\mu_0,\mu_1} \subset \cL_3(1;z_1)$
is computed (using Theorem~2 in \cite{Goulart-Saldanha1}) as
\[ z_1 = \acute\eta(-1)^{\mu_0}(\hat a\hat c)^{\mu_1}\acute\eta
= (-1)^{(\mu_0+1)}(\hat a\hat c)^{(\mu_1+1)},
\quad -1 = \longhat(aba), \quad \hat a\hat c = \longhat(cba). \]
We know from Proposition~\ref{prop:closed}
that $\cM_{\mu_0,\mu_1} \subset \cL_3$ is a closed subset.
From Proposition~\ref{prop:submanifold},
$\cM_{\mu_0,\mu_1} \subset \cL_3$ is a topological submanifold.
From Lemma~\ref{lemma:Mmu0mu1contractible},
$\cM_{\mu_0,\mu_1}$ is contractible.

From Lemma~\ref{lemma:alphadois} above we have continuous maps
$\alpha^{2}_{\mu_0,\mu_1}: \Ss^{2\mu_0+2\mu_1} \to \cD_3$.
From Theorem~1 in \cite{Goulart-Saldanha-cw} 
we have a continuous map $i: \cD_3 \to \cL_3$:
take $h_{\mu_0,\mu_1} = i \circ \alpha^{2}_{\mu_0,\mu_1}:
\Ss^{2\mu_0+2\mu_1} \to \cL_3$.
We must verify that the map $h_{\mu_0,\mu_1}$
satisfies the desired properties.
For this, we need more information about the map $i$
which is also in \cite{Goulart-Saldanha-cw}
(starting with the statement of Theorem~4
and the subsequent definition of valid complexes).

The bottom $\alpha^0$ intersects $\cL_3[w_{\bullet}]$
precisely once and in a topologically transversal manner
(see the statement of Lemma~\ref{lemma:alpha0} above
and item (iv) in the definition of valid complexes).
Apart from this point, the bottom is taken to
$\cL_3[\Ideal^\ast(w_{\bullet})]$
(see item (v) in the same definition),
which is disjoint from $\cM$.
The other parts of the domain are taken to $\cL_3[\Ideal_{Y_2}]$,
which is also disjoint from $\cM$.
This proves items (2) and (3).
Finally, item (1) follows from items (4) and (5)
of Lemma~\ref{lemma:alphadois}.
\end{proof}

\bigbreak

\section{Proof of Theorems~\ref{theo:mainS3} and \ref{theo:L3}}
\label{section:theo}

We are ready to prove Theorem~\ref{theo:L3}.
Again, a comparison with the corresponding step
of the proof of the case $n=2$ (in \cite{Saldanha3})
should be illuminating.

Take $z_1 \in \{\pm 1, \pm \hat a\hat c\} = Z(\Quat_4)$.
Let $\tilde\cY(z_1) = \tilde\cY \cap \cL_3(1;z_1)$;
recall that $\tilde\cY$ is
the set of curves whose itinerary includes at least one letter in 
$\tilde Y = S_4 \smallsetminus (\{e\} \cup M)$
(see Equation~\ref{equation:MY}).
From Fact~\ref{fact:tildeY}, the inclusion
$\tilde\cY(z_1) \subset \Omega\Spin_4(1;z_1)$
is a weak homotopy equivalence.
For convenience, let $N_{z_1} \subset \NN^2$ be the set of pairs
$(\mu_0,\mu_1)$ with 
$z_1 = (-1)^{(\mu_0+1)}(\hat a\hat c)^{(\mu_1+1)}$.
We have
\[ \cL_3(1;z_1) = 
\tilde\cY(z_1)
\sqcup \bigsqcup_{(\mu_0,\mu_1) \in N_{z_1}}  \cM_{\mu_0,\mu_1}. \]
The idea is to start with $\tilde\cY(z_1)$
and add (or put back) the submanifolds $\cM_{\mu_0,\mu_1}$
while keeping track of the homotopy type.

\begin{remark}
\label{remark:dims}
For $z_1 = -\hat a\hat c$, we have $N_{z_1} = (2\NN) \times (2\NN)$.
The corresponding dimensions are $0$ (for $(\mu_0,\mu_1) = (0,0)$),
$4$ (for $(2,0)$), $4$ (for $(0,2)$), $8$, $8$, $8$ and so on,
with $j+1$ copies of $4j$.
For $z_1 = \hat a\hat c$, we have $N_{z_1} = (2\NN + 1) \times (2\NN)$.
The corresponding dimensions are
$2$ (for $(1,0)$), $6$ (for $(3,0)$), $6$ (for $(1,2)$) and so on,
with $j+1$ copies of $2+4j$.
For $z_1 = -1$, we start with $2$ (for $(0,1)$)
and in general also have $j+1$ copies of $2+4j$.
Finally, for $z_1 = 1$, we start with $4$ (for $(1,1)$)
and have $j+1$ copies of $4+4j$.
\end{remark}

We state a step in the proof as a lemma.

\begin{lemma}
\label{lemma:bouquetstep}
Consider $z_1 \in Z(\Quat_4)$ and
a finite subset $X_0 \subset N_{z_1}$.
Consider $(\mu_0,\mu_1) \in N_{z_1} \smallsetminus X_0$,
$(\mu_0,\mu_1) \ne (0,0)$,
and set $X_1 = X_0 \sqcup \{(\mu_0,\mu_1)\}$.
Set
\[ \cL_3(X_i) = 
\tilde\cY(z_1)
\sqcup \bigsqcup_{(\mu_0,\mu_1) \in X_i}  \cM_{\mu_0,\mu_1}
\subset \cL_3(1;z_1). \]
Then $\cL_3(X_1)$ is homotopy equivalent to
$\cL_3(X_0) \vee \Ss^{2\mu_0 + 2\mu_1}$.
\end{lemma}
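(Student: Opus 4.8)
The plan is to recognize $\cL_3(X_1)$ as the homotopy cofiber of a map $\Ss^{2\mu_0+2\mu_1-1}\to\cL_3(X_0)$ and then to show that this map is null-homotopic, the null-homotopy being supplied by $h_{\mu_0,\mu_1}$. Write $k=2\mu_0+2\mu_1\ge 2$ and $\cK=\cM_{\mu_0,\mu_1}$. First I would gather the structural facts established so far: by Propositions~\ref{prop:closed} and \ref{prop:submanifold}, Lemma~\ref{lemma:Mmu0mu1contractible} and Lemma~\ref{lemma:coho}, $\cK$ is a closed, contractible, collared topological submanifold of $\cL_3(1;z_1)$ of codimension $k$ with trivial transversal bundle. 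Arguing as in the proof of Proposition~\ref{prop:closed} (Equation~(10) in \cite{Goulart-Saldanha-cw} together with Lemma~\ref{lemma:sqsubseteq2}, and the local structure of Remark~\ref{remark:neighborhood}), a curve of $\cK$ lies in the closure of a stratum $\cL_3[w_1]$ only when $\mu(w_1)=(\mu_0,\mu_1)$, and conversely; hence $\cK$ has an open neighborhood $\cW$ in $\cL_3(1;z_1)$ in which the only strata besides those of $\cK$ lie in $\tilde\cY_3$, so that $\cW\cap\cL_3(X_1)=\cW\cap\bigl(\tilde\cY(z_1)\cup\cK\bigr)=\cW\cap\cL_3(1;z_1)$ is open in $\cL_3(1;z_1)$. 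Using the triviality of the transversal bundle I would fix inside $\cW\cap\cL_3(X_1)$ an open tubular neighborhood $V$ of $\cK$ with a homeomorphism $V\cong\cK\times\RR^k$ sending $\cK$ to $\cK\times\{0\}$; since $\cK$ is contractible, $V$ is contractible and $V\smallsetminus\cK\cong\cK\times(\RR^k\smallsetminus\{0\})$ is homotopy equivalent to $\Ss^{k-1}$. (Incidentally this also shows $\cL_3(X_1)$, like $\cL_3(X_0)$, is a topological Hilbert manifold.)

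Because $\cK$ is closed in $\cL_3(1;z_1)$, the complement $\cL_3(X_0)=\cL_3(X_1)\smallsetminus\cK$ is open in $\cL_3(X_1)$, and $\cL_3(X_1)=\cL_3(X_0)\cup V$ is an open cover with $\cL_3(X_0)\cap V=V\smallsetminus\cK$. The pushout square of an open cover is homotopy cocartesian, so $\cL_3(X_1)$ is weakly equivalent to the homotopy pushout of $\cL_3(X_0)\leftarrow V\smallsetminus\cK\to V$; since $V\simeq\ast$ and $V\smallsetminus\cK\simeq\Ss^{k-1}$, this is the mapping cone $C_f$ of a map $f\colon\Ss^{k-1}\to\cL_3(X_0)$, and up to homotopy $f$ is the inclusion of a normal linking sphere $\{p\}\times\Ss^{k-1}$ of $\cK$ into $\cL_3(X_0)$ at an arbitrary $p\in\cK$ (different choices of $p$ give freely homotopic maps, as $\cK$ is connected). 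The mapping cone of a null-homotopic map $\Ss^{k-1}\to A$ is $A\vee\Ss^k$, so it remains only to prove that $f$ is null-homotopic; given that, $\cL_3(X_1)$ is weakly equivalent, hence (all spaces in sight having CW homotopy type, by \cite{Palais} for the Hilbert manifolds) genuinely homotopy equivalent, to $\cL_3(X_0)\vee\Ss^{2\mu_0+2\mu_1}$.

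For the null-homotopy of $f$ I would use the map $h=h_{\mu_0,\mu_1}\colon\Ss^k\to\cL_3$ of Proposition~\ref{prop:M}. Since $\Ss^k$ is connected and $h$ meets $\cK\subset\cL_3(1;z_1)$ (item 3 of Proposition~\ref{prop:M}), the image of $h$ lies in $\cL_3(1;z_1)$; by item 2 it lies in $\tilde\cY_3\cup\cK$, which is contained in $\cL_3(X_1)$ because $(\mu_0,\mu_1)\in X_1$. By item 3 the intersection of $h$ with $\cK$ is a single point $q=h(N)$ and is topologically transversal, so one may choose a small closed disk $D\cong\DD^k$ about $N$ on which $h$ is a homeomorphism onto a normal disk of $V$ at $q$; then $h|_{\partial D}\colon\Ss^{k-1}\to V\smallsetminus\cK$ is a degree-$\pm1$ parametrization of the linking sphere of $\cK$ at $q$, representing $\pm[f]$ in $\pi_{k-1}(\cL_3(X_0))$. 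On the other hand $h$ maps the complementary $k$-disk $\Ss^k\smallsetminus\interior D$ into $(\tilde\cY_3\cup\cK)\smallsetminus\cK=\tilde\cY_3$, hence into $\tilde\cY_3\cap\cL_3(1;z_1)=\tilde\cY(z_1)\subseteq\cL_3(X_0)$; this restriction is a null-homotopy of $h|_{\partial D}$ in $\cL_3(X_0)$. Therefore $\pm[f]=0$, so $f$ is null-homotopic and the lemma follows.

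I expect the main obstacle to be the point-set work of the first paragraph: upgrading the local product structures of Remark~\ref{remark:neighborhood} and the submanifold analyses of Sections~\ref{section:bacb}--\ref{section:eta} to a genuine global tubular neighborhood $V\cong\cK\times\RR^k$ of $\cK$ inside $\cL_3(X_1)$, and verifying that its complement there is exactly $\cL_3(X_0)$, with no further strata intruding near $\cK$. Once that bookkeeping is in place, the homotopy-theoretic content — the cofiber description and the use of $h_{\mu_0,\mu_1}$ — is short and draws only on properties already recorded in Proposition~\ref{prop:M}.
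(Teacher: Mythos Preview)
Your proposal is correct and follows essentially the same strategy as the paper: both use a trivialized tubular neighborhood of $\cM_{\mu_0,\mu_1}$ inside $\cL_3(X_1)$, identify the linking sphere, and kill it with the null-homotopy supplied by $h_{\mu_0,\mu_1}$ (Proposition~\ref{prop:M}). The only difference is packaging: the paper removes the open tube to get a deformation retract $W_0\subset\cL_3(X_0)$, then glues back the single transversal disk $D$ coming from $h_{\mu_0,\mu_1}$ to obtain $W_1\simeq\cL_3(X_1)$, and reads off $W_1\simeq W_0\vee\Ss^{2\mu_0+2\mu_1}$ directly; you instead phrase the same glueing as a homotopy pushout $\cL_3(X_0)\leftarrow V\smallsetminus\cK\to V$ and recognize it as a mapping cone. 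Your first paragraph (showing the tube can be taken inside $\cL_3(X_1)$, i.e.\ that no other $\cM_{\mu_0',\mu_1'}$ intrudes near $\cK$) is a point the paper leaves implicit; note that the clean way to justify it is exactly the letter-by-letter argument via Remark~\ref{remark:neighborhood} and Lemma~\ref{lemma:sqsubseteq}, since the relevant direction is $w'\sqsubseteq\sigma$ for a single letter $\sigma\in M$ rather than the global statement of Lemma~\ref{lemma:sqsubseteq2}.
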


\begin{proof}
Notice that both $\cL_3(X_0)$ and $\cL_3(X_1)$ are Hilbert manifolds
and that $\cL_3(X_1) = \cL_3(X_0) \sqcup \cM_{\mu_0,\mu_1}$.
Also, $\cM_{\mu_0,\mu_1} \subset \cL_3(X_1)$
is a closed subset and a contractible collared
topological submanifold of codimension $2\mu_0 + 2\mu_1$.
Draw a thin closed tubular neighborhood $V \subset \cL_3(X_1)$
around $\cM_{\mu_0,\mu_1}$; let $V_0$ be the interior of $V$.
By transversality,
we may assume that the intersection of the image of the map
$\alpha_{\mu_0,\mu_1}$ with $V$ is a closed disk $D$.
Let $W_0 = \cL_3(X_1) \smallsetminus V_0 \subset \cL_3(X_0)$
and $W_1 = W_0 \cup D \subset \cL_3(X_1)$.
Both inclusions $W_0 \subset \cL_3(X_0)$ and $W_1 \subset \cL_3(X_1)$
are deformation retracts and therefore homotopy equivalences.

The space $W_1$ is obtained from $W_0$ by glueing a disk $D$
of dimension $2\mu_0 + 2\mu_1$ along the boundary $\partial D$.
The map $\alpha_{\mu_0,\mu_1}$ tells us that $\partial D$
is homotopic to a point (in $W_0$).
Thus, $W_1$ is homotopy equivalent to $W_0 \vee \Ss^{2\mu_0 + 2\mu_1}$.
% completing the proof.
\end{proof}

\begin{remark}
\label{remark:veeS0}
The case $(\mu_0,\mu_1) = (0,0)$ was excluded to avoid confusion.
This rather degenerate case \textit{does} work, however,
provided we agree that $W_0 \vee \Ss^0 = W_0 \sqcup \{\bullet\}$.
The contractible connected component corresponding to $\{\bullet\}$
is of course the set of convex curves.

Also, if $z_1 = -\hat a\hat c$ and $(0,0) \in X_0$,
the set $\cL_3(X_0)$ has two connected components,
one of them contractible.
We should agree that the base point at which the sphere is glued in 
$\cL_3(X_0) \vee \Ss^{2\mu_0 + 2\mu_1}$
belongs to the non-contractible connected component.
\end{remark}

\begin{proof}[Proof of Theorem \ref{theo:L3}]
Consider $z_1 \in Z(\Quat_4)$.
In order to determine the homotopy type of $\cL_3(z_1)$,
start with $\tilde\cY(z_1)$ and put back the subsets 
$\cM_{\mu_0,\mu_1}$ for $(\mu_0,\mu_1) \in N_{z_1} \subset \NN^2$.
The dimensions are described in Remark~\ref{remark:dims};
the effect of each step is described in Lemma~\ref{lemma:bouquetstep}.
\end{proof}

\begin{proof}[Proof of Theorem \ref{theo:mainS3}]
We have
\[ \cL_3(I;I) = \cL_3(1;1) \sqcup \cL_3(1;-1), \qquad
\cL_3(I;-I) = \cL_3(1;\hat a\hat c) \sqcup \cL_3(1;-\hat a\hat c). \]
The homotopy type of the connected components of
$\cL_3(I;\pm I)$ is therefore given in Theorem~\ref{theo:L3}.

Given a $1$-periodic curve $\Gamma: \RR \to \SO_{n+1}$,
$\Gamma \in \cL_3(\per)$,
consider $Q = \Gamma(0)$ and $\Gamma_0 = Q^{-1} \Gamma$.
The map $\Phi: \cL_3(\per) \to \cL_3(I;I) \times \SO_4$,
$\Phi(\Gamma) = (\Gamma_0,Q)$,
is a homotopy equivalence.
It is not surjective, however,
and therefore not a homeomorphism.
Let us first comment on this difficulty:
we shall than see that it can be overcome.

Recall from Remark \ref{remark:sufficientlysmooth} that
we were vague concerning the differentiability class
of the curves $\gamma$ or $\Gamma$:
assume that we have $\Gamma \in H^r$, $r \ge 5$.
The space of curves $\cL_3(I;I)$ is the set of curves
$\Gamma: [0,1] \to \SO_{n+1}$ in $H^r$ with
$\Gamma(0) = \Gamma(1) = I$.
Notice that we do not require, for instance,
$\Gamma'(0) = \Gamma'(1)$.
Such a curve $\Gamma$ can be extended in a $1$-periodic manner
to $\Gamma_1: \RR \to \SO_{n+1}$.
The curve $\Gamma_1$ need not, however, be in $H^r$,
since it may not be differentiable at $t = 0$.

In order to define $\Psi: \cL_3(I;I) \times \SO_4 \to \cL_3(\per)$
at $(\Gamma,Q)$, we need not just extend $\Gamma$ in a periodic manner.
We also need to change the periodic curve $\Gamma_1$ slightly,
so as to make it sufficiently smooth at $t = 0$,
thus defining another curve $\Gamma_2$,
also $1$-periodic and with $\Gamma_2(0) = I$.
The smoothening procedure has been discussed at length
in Lemma~2.5 in \cite{Goulart-Saldanha1}.
% and also \cite{gsie}.
% Appendix B in \cite{Goulart-Saldanha},
We then define $\Psi(\Gamma,Q) = Q\Gamma_2$.
The compositions $\Phi \circ \Psi$ and $\Psi \circ \Phi$
are homotopic to the identity, completing the proof.
\end{proof}

\bibliography{gs}
\bibliographystyle{plain}
%\printbibliography

\footnotesize
%  \footnote

\noindent
Emília Alves \\
Departamento de Matem\'atica Aplicada,
Instituto de Matem\'atica e Estat{\'\i}stica, UFF, \\
Rua Professor Waldemar de Freitas Reis s/n,
Niter\'oi, RJ 24210-201, Brazil \\
\url{emiliaalves@id.uff.br}

% \footnote{emiliaalves@id.uff.br,
% Instituto de Matem\'atica e Estat{\'\i}stica, UFF}
% emiliacstalves@puc-rio.br;
% Departamento de Matem\'atica, PUC-Rio and 
% Departamento de Matem\'atica Aplicada,

\noindent
Victor Goulart \\
Departamento de Matem\'atica, UFES, \\
Av. Fernando Ferrari 514; Campus de Goiabeiras, Vit\'oria, ES 29075-910, Brazil. \\
\url{jose.g.nascimento@ufes.br}

\smallskip

\noindent
 Nicolau C. Saldanha \\
Departamento de Matem\'atica, PUC-Rio, \\
R. Marqu\^es de S. Vicente 255, Rio de Janeiro, RJ 22451-900, Brazil.  \\
\url{saldanha@puc-rio.br}

\end{document}